\numberwithin{equation}{section}
\theoremstyle{plain}
\newtheorem{theorem}{Theorem}[section]
\newtheorem{lemma}[theorem]{Lemma}
\newtheorem{proposition}[theorem]{Proposition}
\theoremstyle{definition}
\newtheorem{definition}[theorem]{Definition}
\newtheorem{remark}[theorem]{Remark}
\newtheorem{example}[theorem]{Example}
\newcommand{\E}{\mathbb{E}}
\newcommand{\W}{\dot{W}}
\newcommand{\ud}{\ensuremath{\mathrm{d}}}
\newcommand{\Indt}[1]{1_{\left\{#1 \right\}}}
\newcommand{\Norm}[1]{\left|\left|  #1   \right|\right|}
\newcommand{\Itos}{It\^{o}'s }
\newcommand{\InPrd}[1]{\left\langle #1 \right\rangle}
\newcommand{\calB}{\mathcal{B}}
\newcommand{\calF}{\mathcal{F}}
\newcommand{\calG}{\mathcal{G}}
\newcommand{\calK}{\mathcal{K}}
\newcommand{\calH}{\mathcal{H}}
\newcommand{\calM}{\mathcal{M}}
\newcommand{\calN}{\mathcal{N}}
\newcommand{\calP}{\mathcal{P}}
\newcommand{\R}{\mathbb{R}}
\newcommand{\Erf}{\ensuremath{\mathrm{erf}}}
\newcommand{\Erfc}{\ensuremath{\mathrm{erfc}}}
\DeclareMathOperator{\Lip}{\mathit{L}}
\DeclareMathOperator{\LIP}{Lip}
\DeclareMathOperator{\Vip}{\overline{\varsigma}}
\DeclareMathOperator{\vv}{\varsigma}
\title{{\bf H\"older-continuity for the nonlinear stochastic heat equation 
with rough initial conditions}}
\author{{\bf Le Chen$^*$} and {\bf Robert C. Dalang\footnote{Research
partially supported by the Swiss National Foundation for Scientific
Research.}}\\
\\
\it\small Institut de math\'ematiques\\
\it\small \'Ecole Polytechnique F\'ed\'erale de Lausanne\\
\it\small Station 8, \\
\it\small CH-1015 Lausanne, \\
\it\small Switzerland\\
\small \textit{e-mails:}
le.chen@epfl.ch, robert.dalang@epfl.ch\\
\small \textit{Phone:} +41 21 693 25 88\quad  
\textit{Fax:} +41 21 693 25 50
\small
\date{}
}
\begin{document}

\maketitle
\begin{center}
\begin{minipage}[rct]{5 in}
\footnotesize \textbf{Abstract:} 
We study space-time regularity of the solution of the nonlinear stochastic heat 
equation in one spatial dimension driven by space-time white noise, with a 
rough initial condition. This initial condition is a locally finite measure 
$\mu$ with, possibly, 
exponentially growing tails. We show how this regularity depends, in a 
neighborhood of $t=0$, on the regularity of the initial  condition.  On compact 
sets in which $t>0$, the 
classical H\"older-continuity exponents $\frac{1}{4}-$ in time and 
$\frac{1}{2}-$ in space remain valid. However, on compact sets that include 
$t=0$, the H\"older continuity of the solution is 
$\left(\frac{\alpha}{2}\wedge \frac{1}{4}\right)-$ in time and 
$\left(\alpha\wedge \frac{1}{2}\right)-$ in space, provided $\mu$ is absolutely 
continuous with an $\alpha$-H\"older continuous density.

\vspace{2ex}
\textbf{AMS 2010 subject classifications:}
Primary 60H15. Secondary 60G60, 35R60.

\vspace{2ex}
\textbf{Keywords:}
nonlinear stochastic heat equation, rough initial
data, sample path H\"older continuity, moments of increments.
\vspace{4ex}
\end{minipage}
\end{center}


\section{Introduction}

Over the last few years, there has been considerable interest in the stochastic heat equation with non-smooth initial data:
\begin{align}\label{EH:Heat}
\begin{cases}
\left(\frac{\partial }{\partial t} - \frac{\nu}{2}
\frac{\partial^2 }{\partial x^2}\right) u(t,x) =  \rho(u(t,x))
\:\dot{W}(t,x),&
x\in \R,\; t \in\R_+^*, \\
\quad u(0,\cdot) = \mu(\cdot)\;.
\end{cases}
\end{align}
In this equation, $\dot{W}$ is a space-time white noise, $\rho: \R \to \R$ is a globally Lipschitz function and
$\R_+^*=\;]0,\infty[\:$. The initial data $\mu$ is a signed Borel measure, 
which we assume belongs to the set
\[
   \calM_H(\R) := \left\{\text{signed Borel measures $\mu$, s.t.}\: 
\int_\R e^{-a x^2} |\mu|(\ud x)<+\infty,\: \text{for all $a>0$}\right\}.
\]
In this definition, $|\mu|:= \mu_+ + \mu_-$, where $\mu=\mu_+-\mu_-$ and 
$\mu_\pm$ are the two non-negative Borel measures with disjoint support that 
provide the Jordan decomposition of $\mu$.
The set $\calM_H(\R)$ can be equivalently characterized by the condition that
\begin{align}\label{EH:J0finite}
\left(|\mu| * G_\nu(t,\cdot)\right)
(x) = \int_\R G_\nu(t,x-y) |\mu|(\ud y)<+\infty\;, \quad \text{for all $t>0$ 
and $x\in\R$},
\end{align}
where $*$ denotes the convolution in the space variable and $G_\nu(t,x)$ is 
the one-dimensional heat kernel function
\begin{align}
G_\nu(t,x) := \frac{1}{\sqrt{2\pi \nu t}} \exp\left\{-\frac{x^2}{2\nu
t}\right\},\quad (t,x)\in\R_+^*\times\R\:.
\end{align}
Therefore, $\calM_H(\R)$ is precisely the set of initial conditions for which the homogeneous heat equation has a solution for all time.

   The use of non-smooth initial data is initially motivated by the {\it parabolic 
Anderson model} (in which $\rho(u) = u$) with initial condition given by the 
Dirac delta 
function $\mu = \delta_0$ (see \cite{BertiniCancrini94Intermittence}, and more 
recently, 
\cite{FoondunKhoshnevisan08Intermittence, ConusKhosh10Weak,ConusEct12Initial}). 
These papers are mainly 
concerned with the study of the intermittency property, which is a property 
that concerns the behavior of moments of the solution $u(t,x)$. Some very 
precise moment estimates have also been recently obtained by the authors in 
\cite{ChenDalang13Heat}.

   In this paper, we are interested in space-time regularity of the  sample paths $(t,x) \mapsto u(t,x)$, and, in particular, in how this regularity depends, in a neighborhood of $\{0\}\times \R$, on the regularity of the initial condition $\mu$.

Given a subset $D\subseteq \R_+\times\R$ and positive constants 
$\beta_1,\beta_2$, denote by $C_{\beta_1,\beta_2}(D)$ the set of functions $v: 
\R_+ \times \R \to \R$ with the following property: for each compact subset 
$\tilde D \subset D$, there is a finite constant $c$ such that for all $(t,x)$ 
and $(s,y)$ in $\tilde D$,
\[
   \vert v(t,x) - v(s,y) \vert \leq c \left(\vert t-s\vert^{\beta_1} + \vert x-y \vert^{\beta_2}\right).
\]
Let
\[
   C_{\beta_1-,\beta_2-}(D) := \cap_{\alpha_1\in \;\left]0,\beta_1\right[} \cap_{\alpha_2\in \;\left]0,\beta_2\right[} C_{\alpha_1,\alpha_2}(D)\;.
\]
When the measure $\mu$ has a bounded density $f$ with respect to Lebesgue 
measure, then the initial condition is written $u(0,x) = f(x)$, for all $x \in 
\R$. When $f$ is bounded, then the H\"older continuity of $u$ was already 
studied in \cite[Corollary 3.4, p.318]{Walsh86}. In 
\cite{BertiniCancrini94Intermittence}, it is stated, for the parabolic Anderson 
model, that if the initial data satisfies
\[
\sup_{t\in [0,T]} \sup_{x\in\R} \sqrt{t}
\left(\mu*G_\nu(t,\circ)\right)(x)<\infty,\quad\text{for all $T>0$},
\]
then $u\in C_{\frac{1}{4}-,\frac{1}{2}-}(\R_+\times\R)$, a.s.
In \cite{PospisilTribe07Parameter,Shiga94Two}, this result is extended to the 
case where the initial data is a continuous function with tails that grow at 
most exponentially at $\pm \infty$.

   Sanz-Sol\'e and Sarr\`a \cite{SanSoleSarra99Holder} considered the stochastic heat equation over $\R^d$ with spatially homogeneous colored noise which is white in time. Assuming that the spectral measure $\tilde{\mu}$ of the noise satisfies
\begin{align}\label{EH:Holder-mu}
\int_{\R^d}\frac{\tilde{\mu}(\ud
\xi)}{\left(1+|\xi|^2\right)^\eta}<+\infty,
\quad\text{for some $\eta\in \;]0,1[$,}
\end{align}
they proved that if the initial data is a bounded $\rho$-H\"older continuous function for some $\rho\in \;]0,1[$, then 
\[
u\in C_{\frac{1}{2}(\rho\wedge (1-\eta))-, \rho\wedge
(1-\eta)-}
\left(\R_+\times\R\right)\;,\quad\text{a.s.}\;,
\]
where $a\wedge b := \min(a,b)$.
For the case of space-time white noise on $\R_+\times\R$, the spectral measure $\tilde{\mu}$ is Lebesgue measure and hence the exponent $\eta$ in \eqref{EH:Holder-mu} (with $d=1$) can take the value $\frac{1}{2}-\epsilon$ for any $\epsilon>0$. Their result (\cite[Theorem 4.3]{SanSoleSarra99Path}) implies that
\[
u\in C_{\left(\frac{1}{4}\wedge\frac{\rho}{2}\right)-,
\left(\frac{1}{2}\wedge\rho\right)-}\left(\R_+\times\R\right)\;,\quad\text{
a.s.}
\]

   More recently, in the paper \cite[Lemma 9.3]{ConusEct12Initial}, assuming that the initial condition $\mu$ is a finite measure, Conus {\it et al} obtain tight upper bounds on moments of $u$ and bounds on moments of spatial increments of $u$ at fixed positive times: in particular, they show that $u$ is H\"older continuous in $x$ with exponent $\frac{1}{2} - \epsilon$.
	
	Finally, in the papers \cite{DalangKhNualart07HittingAdditive, DalangKhNualart09HittingMult}, Dalang, Khoshnevisan and Nualart considered a system of stochastic heat equations with vanishing initial conditions driven by space-time white noise, and proved that $u \in C_{\frac{1}{4}-,\frac{1}{2}-}\left(\R_+\times\R\right)$.

   The purpose of this paper is to extend the above results to the case where 
 $\mu \in \calM_H(\R)$. In particular, we show that $u \in 
C_{\frac{1}{4}-,\frac{1}{2}-}\left(\R_+^*\times\R\right)$. Indeed, it is 
necessary to exclude the line $\{0\}\times \R$ unless the initial data $\mu$ has 
a density $f$ that is sufficiently smooth (see part (2) of Theorem 
\ref{T2:Holder}). Indeed, in this case, the regularity of $u$ in the neighborhood of 
$t=0$ can be no better than the regularity of $f$. 
	
	Recall that the rigorous interpretation of \eqref{EH:Heat}, used in \cite{ChenDalang13Heat}, is the following integral equation:
\begin{equation}\label{EH:WalshSI}
\begin{aligned}
u(t,x) &= J_0(t,x) + I(t,x),\cr
I(t,x) &= \iint_{[0,t]\times\R} G_\nu\left(t-s,x-y\right)
\rho\left( u\left(s,y\right) \right)
W\left(\ud s,\ud y\right),
\end{aligned}
\end{equation}
where $J_0(t,x) := \left(\mu * G_\nu(t,\cdot)\right)(x)$, and the stochastic integral is interpreted in the sense of Walsh \cite{Walsh86}. The regularity of $(t,x) \mapsto J_0(t,x)$ is classical (see Lemma \ref{LH:J0Cont}), so the main effort is to understand the H\"older-regularity of $(t,x) \mapsto I(t,x)$ at $t=0$. In general, this function is not H\"older continuous at $t=0$, but it is H\"older continuous away from the origin (see Theorem \ref{T2:Holder} and Proposition \ref{rd:moment}). We also show that for certain locally unbounded choices of the density $f$, the optimal H\"older exponent of $t \mapsto I(t,x)$ (for $t \in [0,1]$, say) may be arbitrarily close to $0$ (see Proposition \ref{P:Holder-Ubd}).

   The difficulties for proving the H\"older continuity of $u$ lie in part in the fact that
for initial data satisfying \eqref{EH:J0finite}, $\E\left[|u(t,x)|^p\right]$ 
need not be bounded over $x\in\R$, and mainly in the fact that the initial data is 
irregular. Indeed, standard techniques, which isolate the effects of initial 
data by using the $L^p(\Omega)$-boundedness of the solution, fail in our case 
(see Remark \ref{RH:Standard}). Instead, the initial data play an active role 
in our proof. We also note that Fourier transform techniques are not directly 
applicable here because $\mu$ need not be a tempered measure.

  Finally, it is natural to ask in what sense the measure $\mu$ is indeed the initial condition for the stochastic heat equation? We show in Proposition \ref{PH:Weak} that $u(t,\cdot)$ converges weakly (in the sense of distribution theory) to $\mu$ as $t \downarrow 0$, so that $\mu$ is the initial condition of \eqref{EH:Heat} in the classical sense used for deterministic p.d.e.'s \cite[Chapter 7, Section 1]{friedman}.

The paper is structured as follows. In Section \ref{SH:Preliminaries}, we recall 
the results of \cite{ChenDalang13Heat} that we need here. Our main results are 
stated in Section \ref{SH:MainResult}. The proofs are presented 
in Section \ref{SH:Proofs}. Finally, some technical lemmas are listed in the appendix.

\section{Some preliminaries}\label{SH:Preliminaries}

Let $W=\left\{
W_t(A),\:A\in\calB_f\left(\R\right),t\ge 0 \right\}$
be a space-time white noise
defined on a complete probability space $(\Omega,\calF,P)$, where
$\calB_f\left(\R\right)$ is the
collection of Borel sets with finite Lebesgue measure.
Let  
\[
\calF_t^0 = \sigma\left(W_s(A),\:0\le s\le
t,\:A\in\calB_f\left(\R\right)\right)\vee
\calN,\quad t\ge 0,
\]
be the natural filtration of $W$ augmented by the $\sigma$-field $\calN$ 
generated
by all $P$-null sets in $\calF$.
For $t\ge 0$, define $\calF_t := \calF_{t+}^0 = \wedge_{s>t}\calF_s^0$.
In the following, we fix the filtered
probability space $\left\{\Omega,\calF,\{\calF_t,\:t\ge0\},P\right\}$.
We use $\Norm{\cdot}_p$ to denote the
$L^p(\Omega)$-norm ($p\ge 1$).
With this setup, $W$ becomes a worthy martingale measure in the sense of Walsh
\cite{Walsh86}, and $\iint_{[0,t]\times\R}X(s,y) W(\ud s,\ud y)$ is
well-defined in this reference for a suitable class of random fields
$\left\{X(s,y),\; (s,y)\in\R_+\times\R\right\}$.

In this paper, we use $\star$ to denote the simultaneous convolution in both 
space and time variables.
\begin{definition}\label{D2:Solution}
A process $u=\left(u(t,x),\:(t,x)\in\R_+^*\times\R\right)$  is called a 
{\it random field solution} to \eqref{EH:WalshSI} if
the following four conditions are satisfied:
\begin{enumerate}[(1)]
 \item $u$ is adapted, i.e., for all $(t,x)\in\R_+^*\times\R$, $u(t,x)$ is
$\calF_t$-measurable;
\item $u$ is jointly measurable with respect to
$\calB(\R_+^*\times\R)\times\calF$;
\item $\left(G_\nu^2 \star \Norm{\rho(u)}_2^2\right)(t,x)<+\infty$
for all $(t,x)\in\R_+^*\times\R$, and
the function $(t,x)\mapsto I(t,x)$ mapping $\R_+^*\times\R$ into
$L^2(\Omega)$ is continuous;
\item $u$ satisfies \eqref{EH:WalshSI} a.s.,
for all $(t,x)\in\R_+^*\times\R$.
\end{enumerate}
\end{definition}

Assume that $\rho:\R\mapsto \R$ is globally Lipschitz
continuous with Lipschitz constant $\LIP_\rho>0$.
We consider the following growth conditions on $\rho$:
for some constants $\Lip_\rho>0$ and $\Vip \ge 0$,
\begin{align}\label{EH:LinGrow}
|\rho(x)|^2 \le \Lip_\rho^2 \left(\Vip^2 +x^2\right),\qquad \text{for
all $x\in\R$}\;.
\end{align}
Note that $\Lip_\rho\le \sqrt{2}\LIP_\rho$, and the inequality may be strict.
Of particular importance is the linear case (the {\it parabolic
Anderson model}): $\rho(u)=\lambda u$ with $\lambda\ne 0$, which is a
special case of the following quasi-linear growth condition:
for some constant $\vv\ge 0$,
\begin{align}\label{EH:qlinear}
|\rho(x)|^2= \lambda^2\left(\vv^2+x^2\right),\qquad  \text{for
all $x\in\R$}\;.
\end{align}

Define the kernel functions:
\begin{align}
\label{E:K}
 \calK(t,x)=\calK(t,x;\nu,\lambda)&:=G_{\frac{\nu}{2}}(t,x)
\left(\frac{\lambda^2}{\sqrt{4\pi\nu t}}+\frac{\lambda^4}{2\nu}
\: e^{\frac{\lambda^4 t}{4\nu}}\Phi\left(\lambda^2
\sqrt{\frac{t}{2\nu}}\right)\right),\\
\label{E2:H}
\calH(t)=\calH(t;\nu,\lambda)&:=\left(1\star \calK \right)(t,x) = 2 
e^{\frac{\lambda^4\:
t}{4\nu}}\Phi\left(\lambda^2\sqrt{\frac{t}{2\nu}}\right)-1,
\end{align}
where $\Phi(x)=\int_{-\infty}^x (2\pi)^{-1/2}e^{-y^2/2}\ud y$, and 
the formula on the right-hand side is explained in 
\cite[(2.18)]{ChenDalang13Heat}.
Some functions related to $\Phi(x)$ are the error functions $\Erf(x)= 
\frac{2}{\sqrt{\pi}}\int_0^x e^{-y^2}\ud y$ and $\Erfc(x)=1-\Erf(x)$. Clearly, 
$\Phi(x)= \left(1+\Erf(x/\sqrt{2})\right)/2$. 

Let $z_p$ be the universal constant in the Burkholder-Davis-Gundy inequality 
(see \cite[Theorem 1.4]{ConusKhosh10Farthest}, in particular, $z_2=1$) which 
satisfies $z_p\le 2\sqrt{p}$ for all $p\ge 2$. Let $a_{p,\Vip}$ be the constant 
defined by
\begin{align*}
a_{p,\Vip} \: :=\:
\begin{cases}
2^{(p-1)/p}& \text{if $\Vip\ne 0,\: p>2$},\cr
\sqrt{2} & \text{if $\Vip =0,\: p>2$},\cr
1 & \text{if $p=2$}.
\end{cases}
\end{align*}
Notice that $a_{p,\Vip}\in [1,2]$.
Denote $\overline{\calK}(t,x):= \calK(t,x;\nu,\Lip_\rho)$,
$\widehat{\calK}_p(t,x)= \calK(t,x; \nu, a_{p,\Vip} z_p \Lip_\rho)$ and 
$\overline{\calH}(t):= \calH(t;\nu,\Lip_\rho)$,
$\widehat{\calH}_p(t)= \calH(t; \nu, a_{p,\Vip} z_p \Lip_\rho)$.

The following theorem is mostly taken from \cite[Theorem 
2.4]{ChenDalang13Heat}, 
except that \eqref{E2:SecMom-Delta} comes from \cite[Corollary 
2.8]{ChenDalang13Heat}.

\begin{theorem}[Existence, uniqueness and moments]
\label{T2:ExUni}
Suppose that
the function $\rho$ is Lipschitz continuous and satisfies \eqref{EH:LinGrow}, 
and $\mu\in\calM_H(\R)$.
Then the stochastic integral equation \eqref{EH:WalshSI} has a random
field solution
$u=\{u(t,x),\: (t,x)\in\R_+^*\times\R\}$. Moreover:\\
(1) $u$ is unique (in the sense of versions).\\
(2) $(t,x)\mapsto u(t,x)$ is $L^p(\Omega)$-continuous for all integers $p\ge
2$.\\
(3) For all even integers $p\ge 2$, all $t>0$ and $x,y\in\R$,
\begin{align}
\label{E2:SecMom-Up}
\Norm{u(t,x)}_p^2 \le
\begin{cases}
J_0^2(t,x) + \left(J_0^2\star \overline{\calK} \right) (t,x) +
\Vip^2  \overline{\calH}(t),& \text{if $p=2$,}\cr
2J_0^2(t,x) + \left(2J_0^2\star \widehat{\calK}_p \right) (t,x) +
\Vip^2  \widehat{\calH}_p(t),& \text{if $p>2$.}
\end{cases}
\end{align}
(4) In particular, if $|\rho(u)|^2=\lambda^2
\left(\vv^2+u^2\right)$, then for all $t>0$ and $x,y\in\R$,
\begin{align}
 \label{E2:SecMom}
 \Norm{u(t,x)}_2^2 = J_0^2(t,x) + \left(J_0^2 \star \calK \right) (t,x)+ \vv^2
\: \calH(t).
\end{align}
Moreover, if $\mu= \delta_0$ (the Dirac delta function), then
\begin{align}
 \label{E2:SecMom-Delta}
\Norm{u(t,x)}_2^2 = \frac{1}{\lambda^2} \calK(t,x) + \vv^2 \calH(t).
\end{align}
\end{theorem}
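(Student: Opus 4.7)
I would construct $u$ by Picard iteration in $L^p(\Omega)$ and track the moments of the iterates through a self-similarity of $G_\nu^2$ that yields the kernels $\calK$ and $\calH$ in closed form. Set $u_0(t,x):=J_0(t,x)$ and define
\begin{align*}
u_{n+1}(t,x) := J_0(t,x) + \iint_{[0,t]\times\R} G_\nu(t-s,x-y)\,\rho(u_n(s,y))\,W(\ud s,\ud y).
\end{align*}
By the Burkholder--Davis--Gundy inequality (with constant $z_p$), Minkowski's inequality in $L^p(\Omega)$, and the linear growth condition \eqref{EH:LinGrow},
\begin{align*}
\Norm{u_{n+1}(t,x)}_p^2 \leq a_{p,\Vip}^2 J_0^2(t,x) + a_{p,\Vip}^2 z_p^2 \Lip_\rho^2 \iint_{[0,t]\times\R} G_\nu^2(t-s,x-y)\bigl(\Vip^2 + \Norm{u_n(s,y)}_p^2\bigr)\,\ud s\,\ud y,
\end{align*}
where the constant $a_{p,\Vip}$ arises from the passage between squared $L^p$-norms and sums; when $p=2$, Walsh's isometry replaces BDG and $a_{2,\Vip}=1$ yields an identity.

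The key observation is the factorization $G_\nu^2(t,x) = (4\pi\nu t)^{-1/2}G_{\nu/2}(t,x)$. Iterating the above recursion generates a series of iterated space-time convolutions of $G_{\nu/2}$ against powers of $(\pi\nu s)^{-1/2}$; summing this series in $n$ gives exactly the closed-form kernel $\widehat{\calK}_p$ in \eqref{E:K}, and the $\Vip^2$-terms sum to $\Vip^2\widehat{\calH}_p$, via the identity recorded in \cite[(2.18)]{ChenDalang13Heat}. This yields the moment bound \eqref{E2:SecMom-Up} uniformly in $n$. For the convergence of the iteration, I would estimate the increments $u_{n+1}-u_n$ by the same linear recursion without the $J_0^2$ term; the resulting series in $n$ is dominated term by term by the convergent expansion that defines $\widehat{\calK}_p$, so $(u_n(t,x))$ is Cauchy in $L^p(\Omega)$ pointwise in $(t,x)$, and the limit $u$ solves \eqref{EH:WalshSI} and inherits \eqref{E2:SecMom-Up}.

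Uniqueness follows similarly: the difference $v=u-\tilde u$ of two solutions satisfies a linear integral inequality whose iteration is controlled by the same kernel, forcing $v\equiv 0$. The $L^p$-continuity in (2) is obtained by a BDG estimate on $I(t,x)-I(s,y)$, splitting into ``old'' increments on $[0,s\wedge t]$ and ``new'' ones on $[s\wedge t, s\vee t]$, combined with the classical continuity of $J_0$ on $\R_+^*\times\R$ and the already-established moment bound to dominate the integrands. Part (4) follows from the fact that in the quasi-linear case $|\rho(x)|^2=\lambda^2(\vv^2+x^2)$ and $p=2$, Walsh's isometry turns the recursion into an identity, and iterating yields \eqref{E2:SecMom}; the Dirac case \eqref{E2:SecMom-Delta} then follows by substituting $J_0=G_\nu$ and using the resolvent identity $G_\nu^2 + G_\nu^2 \star \calK = \lambda^{-2}\calK$ that $\calK$ satisfies by construction.

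The main obstacle is the explicit identification of the iterated-convolution series with the closed-form $\calK$ and $\calH$---without which the moment bound remains only implicit, and the sharp constants in \eqref{E2:SecMom} and \eqref{E2:SecMom-Delta} would be lost---together with the need to handle $J_0(t,x)$, which for $\mu\in\calM_H(\R)$ is unbounded in $x$ and blows up as $t\downarrow 0$ for singular $\mu$; one must carry $J_0^2$ inside all iterated convolutions throughout the argument rather than estimating it by any kind of supremum in $x$.
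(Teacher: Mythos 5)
This theorem is not proved in the present paper: the authors import it verbatim from their companion paper \cite{ChenDalang13Heat} (Theorem 2.4 there, with \eqref{E2:SecMom-Delta} from Corollary 2.8), so there is no in-paper proof to compare against. That said, your outline is the right one in spirit, and it is consistent with the way these kernels are presented: a Picard scheme whose moment recursion has kernel proportional to $G_\nu^2$, the factorization $G_\nu^2(t,x)=(4\pi\nu t)^{-1/2}G_{\nu/2}(t,x)$, and the closed-form summation of the resulting Neumann series into $\calK$ and $\calH$. Your resolvent identity $G_\nu^2 + G_\nu^2\star\calK=\lambda^{-2}\calK$ for deducing \eqref{E2:SecMom-Delta} from \eqref{E2:SecMom} with $J_0=G_\nu$ is correct and is exactly how that specialization works.

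Two places in your sketch are imprecise and would need more care if carried out. First, the passage to squared $L^p$-norms: your recursion puts $a_{p,\Vip}^2 J_0^2$ in front, but $a_{p,\Vip}^2=2^{2(p-1)/p}\in\;]2,4[\;$ for $p>2$, $\Vip\ne 0$, whereas \eqref{E2:SecMom-Up} has the sharper constant $2$. The cross-term $2J_0\,\Norm{I_n(t,x)}_p$ vanishes for $p=2$ (the stochastic integral has mean zero, so $\Norm{u}_2^2 = J_0^2 + \Norm{I}_2^2$ exactly), but for $p>2$ it must be absorbed with a weight that one then tries to keep as small as possible and that enters only through the effective noise amplitude $a_{p,\Vip}z_p\Lip_\rho$ inside $\widehat\calK_p$, not as a multiplicative prefactor raised to the $n$-th power. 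If you accumulate a factor $a_{p,\Vip}^2>1$ at every step of the iteration, the series diverges; the bookkeeping must be set up so that the constant $a_{p,\Vip}$ only renormalizes the kernel. Second, you close the Picard iteration by arguing that $\sum_n\Norm{u_{n+1}-u_n}_p^2$ is dominated by the convergent expansion of $\widehat\calK_p$; but Cauchy-in-$L^p$ requires summability of $\Norm{u_{n+1}-u_n}_p$, not of its square. This is standard (the $n$-th increment is $O(C^n t^{n/2}/\Gamma(n/2+1)^{1/2})$, so the unsquared series also converges), but as written your sketch does not address it. Neither gap is conceptual, but both would need to be filled to recover the stated constants.

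Finally, your observation in the last paragraph is well placed and is in fact the crux of the whole approach for $\mu\in\calM_H(\R)$: $J_0$ can be unbounded in $x$ and singular as $t\downarrow0$, so one cannot replace $J_0^2(s,y)$ by a supremum (compare Remark~\ref{RH:Standard}); one must carry $J_0^2$ through every space-time convolution, which is precisely why the bound \eqref{E2:SecMom-Up} involves $J_0^2\star\widehat\calK_p$ rather than a constant times $\widehat\calH_p$.
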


The next lemma is classical. A proof can be found in \cite[Lemma 2.20]{ChenDalang13Heat}.

\begin{lemma}\label{LH:J0Cont}
The function $(t,x)\mapsto J_0(t,x)=\left(\mu * G_\nu(t,\cdot)\right)(x)$
 with  $\mu\in\calM_H(\R)$ is smooth for $t>0$:  $J_0(t,x) \in
C^{\infty}\left(\R_+^*\times\R\right)$.
If, in addition,  $\mu(\ud x) = f(x)\ud
x$
where $f$ is continuous, then $J_0$ is continuous up to $t=0$:
$J_0 \in
C^{\infty}\left(\R_+^*\times\R\right)
 \;\cap\;
C\left(\R_+\times\R\right)$, and if $f$ is
$\alpha$-H\"older continuous, then
$J_0 \in
C^{\infty}\left(\R_+^*\times\R\right)
 \;\cap\;
C_{\alpha/2,\alpha}\left(\R_+\times\R\right)$.
\end{lemma}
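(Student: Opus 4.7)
The first claim, $C^\infty$-smoothness on $\R_+^* \times \R$, I would prove by iterated differentiation under the integral sign. On any compact rectangle $K = [t_1, t_2] \times [-M, M] \subset \R_+^* \times \R$, every mixed partial derivative $\partial_t^j \partial_x^k G_\nu(t, x-y)$ has the form $Q_{j,k}(t, x-y)\, G_\nu(t, x-y)$, where $Q_{j,k}$ is a rational function in $t$ (uniformly bounded since $t \ge t_1$) times a polynomial in $x-y$. Using $G_\nu(t, x-y) \le C \exp\bigl(-(x-y)^2/(2\nu t_2)\bigr)$ together with $|x| \le M$, one obtains a bound of the form $C_K \exp(-a y^2)$ for some $a > 0$. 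By the definition of $\calM_H(\R)$, this dominating function is integrable against $|\mu|$, which validates dominated convergence and differentiation under the integral sign, and also yields joint continuity of all derivatives in $(t,x)$.

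For continuity up to $t = 0$ when $\mu(\ud y) = f(y)\,\ud y$ with $f$ continuous, I would use the standard approximation-of-the-identity argument. Since $\int_\R G_\nu(t, y)\,\ud y = 1$, write
\[
J_0(t, x) - f(x_0) = \int_\R G_\nu(t, x-y)\bigl(f(y) - f(x_0)\bigr)\,\ud y,
\]
and split the integral into the regions $|y - x_0| \le \delta$ and $|y - x_0| > \delta$. On the first region, the contribution is small by continuity of $f$ at $x_0$, uniformly for $(t,x)$ with $x$ near $x_0$. On the second, $G_\nu(t, x-y)$ decays faster than any power of $t$ as $t \downarrow 0$, uniformly for $|y - x_0| > \delta$ and $x$ in a neighborhood of $x_0$; combined with the sub-Gaussian integrability of $f$ against Gaussian kernels implied by $f \in \calM_H(\R)$, this contribution vanishes as $t \downarrow 0$.

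For H\"older regularity when $f$ is $\alpha$-H\"older continuous, I would estimate spatial and temporal increments separately. Substituting $y \mapsto x-y$ in the convolution gives
\[
J_0(t, x) - J_0(t, x') = \int_\R G_\nu(t, y)\bigl(f(x-y) - f(x'-y)\bigr)\,\ud y,
\]
so $|J_0(t, x) - J_0(t, x')| \le L|x-x'|^\alpha$ follows at once from H\"older continuity of $f$ and $\int G_\nu(t, \cdot) = 1$. For the temporal increment at fixed $x$, the scaling $y = \sqrt{\nu t}\,z$ yields
\[
J_0(t, x) = \int_\R \tfrac{1}{\sqrt{2\pi}}\, e^{-z^2/2}\, f(x - \sqrt{\nu t}\,z)\,\ud z,
\]
hence
\[
J_0(t, x) - J_0(s, x) = \int_\R \tfrac{1}{\sqrt{2\pi}}\, e^{-z^2/2}\bigl[f(x - \sqrt{\nu t}\,z) - f(x - \sqrt{\nu s}\,z)\bigr]\,\ud z.
\]
Combining the H\"older bound with the elementary inequality $|\sqrt{t} - \sqrt{s}| \le |t-s|^{1/2}$ (valid for $s, t \ge 0$) bounds the integrand by $L\, \nu^{\alpha/2}\, |t-s|^{\alpha/2}\, |z|^\alpha$, and integration against the standard Gaussian density gives $|J_0(t, x) - J_0(s, x)| \le C |t-s|^{\alpha/2}$ via a finite Gaussian moment; the mixed increment then follows from the triangle inequality. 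The key technical ingredient is the elementary square-root inequality $|\sqrt{t} - \sqrt{s}| \le |t-s|^{1/2}$, which is precisely what produces the halving of the H\"older exponent in time compared to space and thus explains the asymmetric exponents appearing in $C_{\alpha/2, \alpha}(\R_+ \times \R)$.
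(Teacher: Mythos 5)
Your proposal is correct and follows the standard route for this kind of heat-kernel regularization result, which the paper itself calls ``classical'' and delegates to \cite[Lemma 2.20]{ChenDalang13Heat} without reproducing a proof. All three ingredients are the canonical ones: differentiation under the integral sign for smoothness (every mixed partial of $G_\nu$ is a polynomial in $1/t$ and $(x-y)$ times $G_\nu$ itself, and on $[t_1,t_2]\times[-M,M]$ such products are dominated by $C_K e^{-ay^2}$ for some $a>0$, which is $|\mu|$-integrable by the very definition of $\calM_H(\R)$); approximation-of-identity with a split at $|y-x_0|\le \delta$ for continuity at $t=0$; and the self-similar change of variables $y=\sqrt{\nu t}\,z$, together with $|\sqrt{t}-\sqrt{s}|\le |t-s|^{1/2}$, which correctly explains why the temporal H\"older exponent is half the spatial one. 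One small point worth flagging: your spatial and temporal increment estimates use the global H\"older constant $L$ of $f$ (``$|f(x-y)-f(x'-y)|\le L|x-x'|^\alpha$ for all $y$''), which is the standard reading of ``$\alpha$-H\"older continuous'' and makes the argument go through verbatim; if instead $f$ were only locally H\"older with a polynomially growing local constant, the estimate would still produce the locally-H\"older conclusion $C_{\alpha/2,\alpha}(\R_+\times\R)$, but you would need to carry the $y$-dependence of the H\"older constant through the Gaussian integral rather than pull it out as a single $L$.
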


For $p\ge 2$ and $X\in L^2\left(\R_+\times\R, L^p(\Omega)\right)$, set
\begin{align*}
 \Norm{X}_{M, p}^2 & :
= \iint_{\R_+^*\times\R}\Norm{X\left(s,y\right)}_p^2\ud s\ud y<+\infty\;.
\end{align*}
When $p=2$, we write $\Norm{X}_M$ instead of $\Norm{X}_{M,2}$.
In \cite{Walsh86}, $\iint X\ud W$ is defined for {\em predictable} $X$
such that $\Norm{X}_M<+\infty$. Let 
$\calP_p$
denote the closure in $L^2\left(\R_+\times\R, L^p(\Omega)\right)$ of simple
processes. 
Clearly, $\calP_2\supseteq \calP_p \supseteq \calP_q$ for $2\le p\le q<+\infty$,
and according to \Itos isometry, $\iint X\ud W$ is well-defined for all
elements of $\calP_2$. The next lemma, taken from \cite[Lemma 
2.14]{ChenDalang13Heat}, gives easily verifiable conditions
for checking that $X\in\calP_2$. 
In the following, we will use $\cdot$ and
$\circ$ to denote the time and space dummy variables respectively.
\begin{lemma}
\label{LH:Lp}
Let $\calG(s,y)$ be a deterministic measurable function from $\R_+^* \times \R$ 
to $\R$ and let  $Z=\left(Z\left(s,y\right),\:
\left(s,y\right)\in \; \R_+^*\: \times \R\right)$ be a process with the 
following properties:
\begin{enumerate}[(1)]
 \item $Z$ is adapted and jointly measurable with respect to
$\calB(\R^2)\times\calF$;
\item $\E\left[\iint_{[0,t]\times\R} \calG^2\left(t-s,x-y\right)
Z^2\left(s,y\right)\ud
s\ud
y\right]<\infty$,
for all $(t,x)\in\R_+\times\R$.
\end{enumerate}
Then for each $(t,x)\in\R_+\times\R$, 
the random field $\left(s,y\right)\in \,]0,t[\: \times\R \mapsto 
\calG\left(t-s,x-y\right) Z\left(s,y\right)$ belongs to $\calP_2$ and so 
the stochastic convolution
\begin{align*}
(\calG\star Z\dot{W})(t,x) := \iint_{[0,t]\times\R}
\calG\left(t-s,x-y\right)Z\left(s,y\right)W\left(\ud s,\ud y\right)
\end{align*}
is a well-defined Walsh integral and the random field $\calG\star Z\W$ is 
adapted.
Moreover, for all even integers $p\ge 2$ and $(t,x)\in\R_+ \times\R$,
\[
\Norm{(\calG\star Z\dot{W})(t,x)}_p^2
\le z_{p}^{2} \Norm{\calG(t-\cdot,x-\circ) Z(\cdot,\circ)}_{M,p}^2.
\]
\end{lemma}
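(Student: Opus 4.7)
The plan is to verify in three steps that (i) the integrand $X(s,y):=\calG(t-s,x-y)Z(s,y)$ belongs to $\calP_2$ for every fixed $(t,x)\in\R_+\times\R$; (ii) the resulting Walsh integral is $\calF_t$-adapted; and (iii) the $L^p$ bound follows from the Burkholder-Davis-Gundy inequality combined with Minkowski's integral inequality.

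For step (i), hypothesis (1) ensures that $X$ (extended by zero outside $[0,t]\times\R$) is adapted and jointly measurable with respect to $\calB(\R_+\times\R)\otimes\calF$, while hypothesis (2) is precisely the statement that $\Norm{X}_M<\infty$. The standard construction from \cite{Walsh86} then applies: truncate $X$ so that $|X|$ and the spatial variable are bounded, mollify in time using the right-continuity of $(\calF_t)$ to ensure that the resulting time-continuous process remains adapted, and discretize in space to produce a sequence of simple predictable processes converging to the truncated $X$ in $L^2(\R_+\times\R,L^2(\Omega))$. Dominated and monotone convergence then remove the truncations, placing $X$ in $\calP_2$. Step (ii) is immediate, since the Walsh integral over $[0,t]\times\R$ of an element of $\calP_2$ is by construction $\calF_t$-measurable.

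For step (iii), fix $(t,x)$ and consider the square-integrable martingale $r\mapsto M_r := \iint_{[0,r]\times\R}\calG(t-s,x-y)Z(s,y)W(\ud s,\ud y)$, whose quadratic variation at $r=t$ equals $\langle M\rangle_t = \iint_{[0,t]\times\R}\calG^2(t-s,x-y)Z^2(s,y)\ud s\ud y$. The BDG inequality of \cite[Theorem 1.4]{ConusKhosh10Farthest} gives $\Norm{M_t}_p^p\le z_p^p\,\E[\langle M\rangle_t^{p/2}]=z_p^p\Norm{\langle M\rangle_t}_{p/2}^{p/2}$. Because $p/2\ge 1$, Minkowski's integral inequality in $L^{p/2}(\Omega)$, applied to the nonnegative integrand $\calG^2 Z^2$, yields
$\Norm{\langle M\rangle_t}_{p/2}\le \iint_{[0,t]\times\R}\calG^2(t-s,x-y)\Norm{Z(s,y)}_p^2\ud s\ud y$, which is exactly $\Norm{\calG(t-\cdot,x-\circ)Z(\cdot,\circ)}_{M,p}^2$. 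Chaining these two bounds and extracting $p$-th roots delivers the announced inequality.

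The main obstacle is the approximation argument in step (i), where one must pass from an adapted, jointly measurable integrand to an element of the closure $\calP_2$ of simple predictable processes. This is a classical but somewhat delicate construction; once it is in place, the combination of BDG and Minkowski makes the moment bound routine.
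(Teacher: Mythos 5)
The paper does not prove this lemma: it is stated as a direct citation of \cite[Lemma~2.14]{ChenDalang13Heat}, so there is no in-paper proof to compare against. Your argument is nonetheless correct and is the standard one that such a reference would give: hypotheses (1)--(2) place $X(s,y)=\calG(t-s,x-y)Z(s,y)$ (extended by zero off $[0,t]\times\R$) in $\calP_2$ by the usual Walsh density argument (truncate, mollify in time using the right-continuity of the augmented filtration, discretize in space, then remove the truncations by dominated/monotone convergence); adaptedness of $(\calG\star Z\dot W)(t,x)$ is built into the construction of the Walsh integral; and the $L^p$ bound follows by applying the Burkholder--Davis--Gundy inequality of \cite[Theorem~1.4]{ConusKhosh10Farthest} to the martingale $r\mapsto M_r=\iint_{[0,r]\times\R}\calG(t-s,x-y)Z(s,y)W(\ud s,\ud y)$, $r\le t$, and then Minkowski's integral inequality in $L^{p/2}(\Omega)$ to its quadratic variation, using the identity $\Norm{Z^2(s,y)}_{p/2}=\Norm{Z(s,y)}_p^2$, which is exactly what produces the constant $z_p^2$ and the norm $\Norm{\calG(t-\cdot,x-\circ)Z(\cdot,\circ)}_{M,p}^2$.
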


\section{Main results} \label{SH:MainResult}

If the initial data is of the form $\mu(\ud x)=f(x)\ud x$, where $f$ is a
bounded function, then it is well-known (see \cite{Walsh86}) that the solution 
$u$ is  bounded in
$L^p(\Omega)$ for all $p\ge 2$. In addition, by the moment formula
\eqref{E2:SecMom-Up},
\begin{align}\label{EH:LpBd}
\sup_{(t,x)\in
[0,T]\times\R}
\Norm{u(t,x)}_p^2
\le
2\:C^2 + \left(2\:C^2+\Vip^2\right)
\widehat{\calH}_p\left(T \right)
<+\infty,\quad\text{for all $T>0$,}
\end{align}
where $C=\sup_{x\in\R}|f(x)| = \sup_{(t,x) \in \R_+\times\R} J_0(t,x)$.
From this bound, one can easily derive that that $u\in 
C_{1/4-,1/2-}\left(\R_+^*\times\R\right)$, a.s.: see Remark 
\ref{RH:BddHolder} below.
We will extend this classical result to the case where $\mu$ can be
unbounded either locally, such as $\mu=\delta_0$, or at $\pm\infty$, such as
$\mu(\ud x) = e^{|x|^a}\ud x$, $a\in \:]1,2[\:$, or both.
However, for irregular initial conditions, H\"older continuity of $u$ will be 
obtained only on $\R_+^*\times\R$, and this continuity extends to 
$\R_+\times\R$ when the initial condition is continuous.

We need a set of initial data defined as 
follows:
\[
\calM_H^*(\R) :=  \left\{\mu(\ud x)=f(x)\ud x,\:\text{s.t. $\exists 
a\in \: ]1,2[\;$, $\sup_{x\in\R} |f(x)|e^{-|x|^a}<+\infty$} \right\}.
\]
Clearly, $\calM_H^*(\R)\subset \calM_H(\R)$, and $\calM_H^*(\R)$ includes all 
absolutely continuous measures whose density functions are bounded by functions 
of the type $c_1 e^{c_2 |x|^a}$ with
$c_1,c_2>0$ and $a\in\: ]1,2[\;$ (see Lemma \ref{LH:MHR}).

\begin{theorem}\label{T2:Holder}
Suppose that $\rho$ is Lipschitz continuous. Then the solution
$u(t,x)=J_0(t,x)+I(t,x)$ to \eqref{EH:WalshSI} has the following sample path
regularity:
\begin{enumerate}[(1)]
\item If $\mu\in\calM_H(\R)$, then $I\in
C_{\frac{1}{4}-,\frac{1}{2}-}\left(\R_+^*\times\R\right)$ a.s. 
Therefore, 
\[
u= J_0+I\in
C_{\frac{1}{4}-,\frac{1}{2}-}\left(\R_+^*\times\R\right),\;\;\text{a.s.}
\]
\item If $\mu\in\calM_H^*(\R)$,  then $I \in 
C_{\frac{1}{4}-,\frac{1}{2}-}\left(\R_+\times\R\right)$, a.s. If, in addition, $\mu(\ud x)=f(x)\ud x$, where $f$ is 
a continuous function, then 
\[
u\in C\left(\R_+\times\R\right) \cap C_{\frac{1}{4}-,\frac{1}{2}-}\left(\R_+^*\times\R\right),\quad \text{a.s.}
\]
If  $\mu\in\calM_H^*(\R)$ and, in addition, $\mu(\ud x)=f(x)\ud x$, where $f$ is an $\alpha$-H\"older continuous function, then
\[
u\in C_{\left(\frac{\alpha}{2}\wedge \frac{1}{4}\right)-,\left(\alpha\wedge 
\frac{1}{2}\right)-}
\left(\R_+\times\R\right) \cap C_{\frac{1}{4}-,\frac{1}{2}-}\left(\R_+^*\times\R\right),\quad \text{a.s.}
\]
\end{enumerate}
\end{theorem}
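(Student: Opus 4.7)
The plan is to apply the Kolmogorov continuity theorem, which reduces the task to establishing, for every compact set $D$ and every integer $p$ large enough, moment bounds of the form
\[
\Norm{I(t,x) - I(s,y)}_p \le C_{D,p} \left(|t-s|^{\beta_1} + |x-y|^{\beta_2}\right)
\]
for all $(t,x),(s,y)\in D$, with $\beta_1$ and $\beta_2$ arbitrarily close to $1/4$ and $1/2$. Taking $s < t$ without loss of generality, I would decompose the time increment $I(t,x) - I(s,x)$ as the sum of a stochastic integral over $[0,s]\times\R$ with kernel $G_\nu(t-r,x-z) - G_\nu(s-r,x-z)$ plus a stochastic integral over $(s,t]\times\R$ with kernel $G_\nu(t-r,x-z)$, and write the spatial increment $I(t,x) - I(t,y)$ as a single stochastic integral with kernel $G_\nu(t-r,x-z) - G_\nu(t-r,y-z)$.

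By Lemma \ref{LH:Lp}, each $L^p$-norm squared of such an increment is bounded by $z_p^2$ times the $L^2$-integral of the kernel squared against $\Norm{\rho(u(r,z))}_p^2$. The Lipschitz growth condition \eqref{EH:LinGrow} combined with Theorem \ref{T2:ExUni}(3) then replaces $\Norm{\rho(u(r,z))}_p^2$ by a deterministic upper bound involving $\Vip^2$, $J_0^2(r,z)$, $(J_0^2 \star \widehat{\calK}_p)(r,z)$, and the bounded factor $\widehat{\calH}_p(r)$. The estimation of each increment therefore reduces to bounding three space-time integrals in which these ingredients are integrated against the relevant squared kernel differences.

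The main obstacle is the analysis of the two integrals involving $J_0^2$: because $\Norm{u(r,z)}_p$ is not uniformly bounded (it blows up as $r\downarrow 0$ for general $\mu\in\calM_H(\R)$ and may grow in $z$ at infinity), the standard device of factoring out a uniform moment bound and reducing to pure kernel computations is unavailable, as already pointed out in the introduction. The rough initial data must play an active role. I would show by careful kernel manipulation --- with several auxiliary estimates most naturally isolated in the appendix --- that, uniformly for $(t,x),(s,y)$ in compact subsets of $\R_+^* \times \R$, the time-difference integral scales as $|t-s|^{1-\epsilon}$ and the space-difference integral as $|x-y|^{2-\epsilon}$. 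Applied through Kolmogorov's theorem and a Borel--Cantelli-type exhaustion of $\R_+^*\times\R$ by compacts, this yields $I \in C_{1/4-,1/2-}(\R_+^* \times \R)$ a.s. Part (1) follows, since $J_0$ is smooth on $\R_+^* \times \R$ by Lemma \ref{LH:J0Cont}.

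For part (2), the additional hypothesis $\mu \in \calM_H^*(\R)$ ensures that $J_0$ is locally bounded up to $t=0$, because the Gaussian decay of $G_\nu$ dominates the subquadratic exponential growth of $f$; this allows the kernel estimates above to be extended uniformly to compacts that include $\{t=0\}$, giving $I \in C_{1/4-,1/2-}(\R_+ \times \R)$ a.s. The regularity statements on $u = J_0 + I$ then follow directly from Lemma \ref{LH:J0Cont}: continuity of $f$ yields $J_0 \in C(\R_+ \times \R)$, while $\alpha$-H\"older continuity of $f$ yields $J_0 \in C_{\alpha/2,\alpha}(\R_+ \times \R)$, so the sum $u = J_0 + I$ inherits exponents $\min(\alpha/2, 1/4)$ in time and $\min(\alpha, 1/2)$ in space on $\R_+ \times \R$, while retaining the $C_{1/4-,1/2-}$ regularity on $\R_+^* \times \R$ from part (1).
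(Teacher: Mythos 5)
Your proposal follows essentially the same route as the paper: reduce to moment estimates via the anisotropic Kolmogorov continuity theorem (Proposition \ref{P2:LocHolder}), decompose the time increment of $I$ into a kernel-difference piece on $[0,s]$ and a tail piece on $(s,t]$, bound the squared $L^p$-norm via Lemma \ref{LH:Lp} and the growth condition \eqref{EH:LinGrow}, substitute the moment bound \eqref{E2:SecMom-Up}, and then carry out careful kernel computations against $J_0^2$; this is exactly the structure of Propositions \ref{rd:moment}, \ref{P2:Holder-I} and \ref{P2:Holder-II}, and the treatment of part (2), including how $J_0\in C_{\alpha/2,\alpha}(\R_+\times\R)$ combines with $I\in C_{1/4-,1/2-}(\R_+\times\R)$, also matches Lemma \ref{LH:J0Cont} and the paper's argument. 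One slip worth noting: the intermediate scalings you quote, $|t-s|^{1-\epsilon}$ and $|x-y|^{2-\epsilon}$, are stronger than what holds (or is needed) — the correct bound is $\Norm{I(t,x)-I(t',x')}_p^2 \le C\left(|t-t'|^{1/2}+|x-x'|\right)$ as in Proposition \ref{rd:moment}, which is precisely what feeds into Kolmogorov to give the $C_{1/4-,1/2-}$ regularity — but this arithmetic slip does not affect the correctness of the method or your stated conclusions.
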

This theorem will be proved in Section \ref{ss:Holder}.

\begin{remark}\label{RH:Standard}
The standard approach (e.g., that is used in \cite[p.54 --55]{Dalang09Mini2},
\cite{SanSoleSarra99Holder}, \cite{Shiga94Two} and \cite{Walsh86}) for proving 
H\"older
continuity cannot be used to establish the above theorem. For instance, 
consider the case
where $\rho(u)=u$ and $\mu=\delta_0$.  The classical argument,
as presented in \cite[p.432]{Shiga94Two} (see also the proof of Proposition 
1.5 in \cite{BallyMilletSS95} and the proof of Corollary 3.4 in 
\cite{Walsh86}), uses Burkholder's inequality for $p>1$ and 
H\"older's inequality with $q=p/(p-1)$ to obtain
\begin{align*}
 \Norm{I(t,x)-I(t',x')}_{2p}^{2p}
\le &C_{p,T}
\left(\int_0^{t\vee t'}\hspace{-0.8em}\int_\R  \ud s \ud y \left(G_ \nu(t-s,x-y)
-G(t'-s,x'-y')\right)^2  \right)^{p/q}\\
&\times
\int_0^{t\vee t'}\hspace{-0.8em}\int_\R \ud s \ud y \left(G_ \nu(t-s,x-y)
-G(t'-s,x'-y')\right)^2\left(1+\Norm{u(s,y)}_{2p}^{2p}\right).
\end{align*}
However, by H\"older's inequality, 
\eqref{E2:SecMom-Delta} and \eqref{E:K},
\[\Norm{u(s,y)}_{2p}^2 \ge
\Norm{u(s,y)}_2^2 \ge G_{\nu/2}(s,y) \frac{1}{\sqrt{4\pi\nu s}}\:.\]
Therefore, $\Norm{u(s,y)}_{2p}^{2p}\ge C G_{\nu/(2p)}(s,y) s^{1/2-p}$. The
second term in the above bound is not $\ud s$--integrable in a neighborhood of 
$\{0\}\times\R$ unless $p<3/2$. Therefore, this classical argument does not 
apply in the presence of an irregular initial condition such as $\delta_0$.
\end{remark}

\begin{example}[Dirac delta initial data]\label{E2:Holder-Delta}
Suppose $\rho(u)= \lambda u$ with $\lambda \ne 0$.
If $\mu=\delta_0$, then neither $x\mapsto J_0(0,x)$ nor 
$x\mapsto \lim_{t\rightarrow 0_+}
\Norm{I(t,x)}_2$ is a continuous function. Indeed, this is clear for
$J_0(0,x) = \delta_0(x)$.
For $\lim_{t\rightarrow 0_+} \Norm{I(t,x)}_2$, by \eqref{E2:SecMom-Delta},
\[
\Norm{I(t,x)}_2^2=
\Norm{u(t,x)}_2^2 - J_0^2(t,x) =
\frac{\lambda^2}{2\nu}e^{\frac{\lambda^4
t}{4\nu}}\Phi\left(\lambda^2\sqrt{\frac{t}{2\nu}}\right) G_{\nu/2}(t,x)\;.
\]
Therefore, $\lim_{t\rightarrow 0_+}
\Norm{I(t,x)}_2^2$ equals $0$ if $x\ne 0$, and $+\infty$ if $x=0$.
(We note that $I(0,x)\equiv 0$ by definition.)
\end{example}

Example \ref{E2:Holder-Delta} suggests that $\Norm{I(t,x)}_2^2$ tends to 
$\frac{\lambda^2}{4\nu} \delta_0(x)$ as $t\rightarrow 0_+$ in the weak 
sense, i.e., 
\[
\lim_{t\rightarrow 0_+}\InPrd{\Norm{I(t,\cdot)}_2^2, \phi(\cdot)} =
\frac{\lambda^2}{4\nu}\phi(0),\quad\text{for all $\phi\in C_c^\infty(\R)$,}
\]
where $C_c^\infty(\R)$ denotes smooth functions with compact support.
Furthermore, the following proposition shows that the 
random field solution of \eqref{EH:WalshSI} satisfies the initial condition 
$u(0,\circ) =\mu$ in a weak sense.

\begin{proposition}\label{PH:Weak}
 For all $\phi\in C_c^\infty(\R)$ and $\mu\in\calM_H(\R)$,
\[
\lim_{t\rightarrow 0_+} 
\int_\R \ud x\: u(t,x) \phi(x) = \int_\R \mu (\ud x)\: \phi(x)
\quad\text{in $L^2(\Omega)$.}
\]
\end{proposition}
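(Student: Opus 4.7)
I would split
\(\int_\R u(t,x)\phi(x)\,dx = J_1(t) + J_2(t)\)
via the mild form $u = J_0 + I$, and show separately that $J_1(t) \to \int_\R \phi\,d\mu$ deterministically while $\Norm{J_2(t)}_2 \to 0$. For $J_1$, Fubini gives $J_1(t) = \int_\R \psi_t(y)\,\mu(dy)$ with $\psi_t := \phi * G_\nu(t,\cdot)$. Since $\phi$ is uniformly continuous, $\psi_t \to \phi$ uniformly on $\R$. To justify dominated convergence against $\mu$, fix $R$ with $\text{supp}\,\phi \subset [-R,R]$; the Gaussian-tail bound
$|\psi_t(y)| \leq 2\Norm{\phi}_\infty \exp(-(|y|-R)^2/(2\nu t))$
for $|y| > R$ is monotone in $t\leq t_0$, so its value at $t = t_0$ provides a dominator in $L^1(|\mu|)$ by the $\calM_H(\R)$ condition.

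\textbf{Stochastic part.} A stochastic Fubini argument (valid because $\psi_{t-s}$ is bounded and $\rho(u)\in L^2(\Omega)$) rewrites
\(J_2(t) = \int_0^t\!\int_\R \psi_{t-s}(y)\,\rho(u(s,y))\,W(ds,dy).\)
Itô's isometry and the Lipschitz growth $|\rho(x)|^2 \leq \Lip_\rho^2(\Vip^2 + x^2)$ give
\[
\Norm{J_2(t)}_2^2 \leq \Lip_\rho^2 \int_0^t\!\int_\R \psi_{t-s}^2(y)\bigl(\Vip^2 + \Norm{u(s,y)}_2^2\bigr)\,dy\,ds,
\]
and the moment estimate \eqref{E2:SecMom-Up} further bounds $\Norm{u(s,y)}_2^2$ by $J_0^2 + J_0^2\star\overline{\calK} + \Vip^2\overline{\calH}(s)$. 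The ``$\Vip^2$'' and ``$\Vip^2\overline{\calH}(s)$'' contributions together are $O(t)$ via Young's inequality $\Norm{\psi_{t-s}}_2 \leq \Norm{\phi}_2$ and the boundedness of $\overline{\calH}$ on $[0,t_0]$, hence vanish.

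\textbf{Main obstacle.} The key estimate is for the ``$J_0^2$'' piece (and analogously the ``$J_0^2\star\overline{\calK}$'' piece). I first bound $\psi_{t-s}^2(y) \leq \Norm{\phi}_\infty^2\, \Theta(y)$ uniformly in $t \in (0,t_0]$, where
$\Theta(y) := \mathbf{1}_{|y|\leq R} + 4\exp(-(|y|-R)_+^2/(\nu t_0))$,
reducing matters to $\int_0^t\!\int \Theta(y)J_0^2(s,y)\,dy\,ds$. Reducing to $\mu \geq 0$ by the Jordan decomposition and using the identity
$G_\nu(s,y-z_1)G_\nu(s,y-z_2) = G_\nu(2s,z_1-z_2)\,G_{\nu/2}(s,y-\bar{z})$ with $\bar{z} := (z_1+z_2)/2$, Cauchy--Schwarz yields
\[
\int_{|y|\leq R} G_\nu(s,y-z_1)\,G_\nu(s,y-z_2)\,dy \leq \frac{2\,\exp\bigl(-[(|z_1|-R)_+^2 + (|z_2|-R)_+^2]/(2\nu s)\bigr)}{\sqrt{4\pi\nu s}}.
\]
Consequently $\int_{|y|\leq R}J_0^2(s,y)\,dy \leq C\, M(s)^2/\sqrt{s}$ with $M(s) := \int e^{-(|z|-R)_+^2/(2\nu s)}|\mu|(dz)$ uniformly bounded in $s\in(0,t_0]$ by the $\calM_H$ tails; the $|y|>R$ contribution is bounded analogously using $\Theta$'s Gaussian factor. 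Integration in $s$ produces an $O(\sqrt{t})$ bound. The $J_0^2 \star \overline{\calK}$ piece is handled by the same technique together with the semigroup identity $G_\nu(t-s,\cdot) * G_{\nu/2}(s,\cdot) = G_\nu(t-s/2,\cdot)$ and the Gaussian form of $\overline{\calK}$ from \eqref{E:K}. The main technical hurdle is the careful management of the $\mu\otimes\mu$ integrals to ensure uniform boundedness in $s$ via the $\calM_H$ condition.
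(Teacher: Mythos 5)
Your proposal is correct, and it follows the same overall decomposition as the paper (split $u = J_0 + I$, use stochastic Fubini to write the stochastic contribution as a Walsh integral against $\psi_{t-s} = \phi * G_\nu(t-s,\cdot)$, apply \Ito isometry plus the growth bound, then plug in the moment estimate \eqref{E2:SecMom-Up} to split into the four pieces $L_1,\dots,L_4$ involving $J_0^2$, $J_0^2\star\overline{\calK}$, $\Vip^2\overline{\calH}$, and $\Vip^2$). Where you genuinely diverge is in the treatment of the dominant piece $L_1(t)=\int_0^t\!\int_\R\psi_{t-s}^2(y)J_0^2(s,y)\,dy\,ds$. The paper first evaluates the $y$- and $s$-integrals in closed form: it expands both squares as $\mu\otimes\mu$ double integrals, applies the Gaussian product identity (Lemma \ref{LH:GG}), uses the semigroup property and Lemma \ref{LH:IntGG} to integrate out $y$ and $s$, bounds the resulting $\Erfc$ factor, changes variables, and arrives at $L_1(t)\le 2c^2\pi\int_{|y|\le a}t\,J_0^2(2t,y)\,dy$; convergence to $0$ then follows from two nested applications of dominated convergence with dominator $J_0^2(2,y)$. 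You instead dominate $\psi_{t-s}^2$ by the $s$-independent weight $\Theta(y)=\mathbf{1}_{|y|\le R}+4e^{-(|y|-R)_+^2/(\nu t_0)}$, then estimate $\int_\R\Theta(y)J_0^2(s,y)\,dy$ directly by Cauchy--Schwarz on the kernel $G_\nu(s,y-z_1)G_\nu(s,y-z_2)$ together with the elementary identity $G_\nu^2=(4\pi\nu s)^{-1/2}G_{\nu/2}$ and Gaussian-tail bounds, obtaining an $O(M(s)^2/\sqrt{s})$ bound uniformly in $s\in(0,t_0]$, and finally integrate in $s$ to get the quantitative rate $L_1(t)=O(\sqrt{t})$. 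Your route is somewhat more elementary (no $\Erfc$ evaluation and no dominated-convergence passage) and yields an explicit convergence rate $\Norm{L(t)}_2^2=O(\sqrt{t})$ which the paper does not record; the paper's route is tighter notationally (it recycles Lemmas \ref{LH:GG}, \ref{LH:Split}, \ref{LH:IntGG} used throughout) and avoids the separate case analysis for $|y|\le R$ and $|y|>R$. Two points you gloss over but which do check out: (i) the estimate $\int_{|y|>R}e^{-(|y|-R)^2/(\nu t_0)}G_{\nu/2}(s,y-z)\,dy\le C e^{-c(|z|-R)_+^2}$ needed for the tail region requires a Gaussian completion-of-squares argument, not just the ``same'' computation as the inner region; and (ii) the reduction of the $J_0^2\star\overline{\calK}$ piece to the $J_0^2$ piece rests on the pointwise bound $(J_0^2\star\overline{\calK})(s,y)\le C\sqrt{s}\,J_0^2(s,y)$, which is exactly the content of the paper's bound $L_2(t)\le\pi\sqrt{t}\,h(t)L_1(t)$ and follows from Lemma \ref{LH:IntGG} plus the factorization $\overline{\calK}(r,\cdot)\le C\,r^{-1/2}G_{\nu/2}(r,\cdot)$ on $[0,t_0]$.
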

The proof of this proposition is presented in Section \ref{ss:Weak}.
The next proposition shows that $t\mapsto I(t,x)$ may be quite far from 
$\frac{1}{4}$--H\"older continuous at the origin, and in fact, the 
H\"older-exponent 
may be arbitrarily near $0$.

\begin{proposition}\label{P:Holder-Ubd}
Suppose $\rho(u)= \lambda u$ with $\lambda \ne 0$ and $\mu(\ud x)=
|x|^{-a}\,\ud x$ with $0<a\le 1$, so that
$J_0(0,x)= |x|^{-a}$ is not locally bounded. Fix $p \geq 2$. Then:

{\em (1)} If $a<1/2$, then for all $x \in \R$, $\lim_{t\rightarrow 0_+}\Norm{I(t,x)}_p\equiv 0$.

{\em (2)} There is $c>0$ such that for all $t>0$, $\Norm{I(t,0)}_p\ge c \:
t^{\frac{1-2a}{4}}$. 

\noindent In particular, when $\frac{1}{2}< a <1$, 
$\lim_{t\rightarrow 0_+} \Norm{I(t,0)}_p = +\infty$, and when $0<a<\frac{1}{2}$,
$t\mapsto I(t,0)$ from $\R_+$ to $L^p(\Omega)$ 
cannot be
smoother than $\frac{1-2a}{4}$--H\"older continuous (in this case 
$\frac{1-2a}{4}\in\:]0,1/4[\:$).
\end{proposition}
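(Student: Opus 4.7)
The plan is to leverage the explicit second-moment identity \eqref{E2:SecMom} (with $\vv=0$), namely $\Norm{I(t,x)}_2^2=(J_0^2\star\calK)(t,x)$, together with the Brownian self-similarity of $J_0$. The substitution $z=\sqrt{t}\,w$ in $J_0(t,x)=\int_\R|z|^{-a}G_\nu(t,x-z)\,\ud z$ yields
\[
J_0(t,x)=t^{-a/2}g(x/\sqrt{t}),\qquad g(\xi):=\int_\R |w|^{-a}G_\nu(1,\xi-w)\,\ud w,
\]
and since $a<1$, $g$ is continuous, strictly positive everywhere, bounded by some $M<\infty$, and decays like $|\xi|^{-a}$ at infinity. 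The passage from $p=2$ to general $p\ge 2$ uses $L^p$-monotonicity on the probability space: $\Norm{\cdot}_p\ge\Norm{\cdot}_2$ handles the lower bound in (2), while for (1) I embed into the smallest even integer $p'\ge p$ so that Lemma \ref{LH:Lp} applies.

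For part (2), the positivity of the second term in $\calK$ (see \eqref{E:K}) gives the pointwise lower bound $\calK(s,y)\ge \lambda^2 G_\nu^2(s,y)$, hence
\[
\Norm{I(t,0)}_2^2\ \ge\ \lambda^2\int_0^t\!\!\int_\R J_0^2(t-s,y)\,G_\nu^2(s,y)\,\ud s\,\ud y.
\]
After rescaling $s=t\sigma$, $y=\sqrt{t}\,\eta$ (Jacobian $t^{3/2}$), the integrand becomes
\[
t^{-a}(1-\sigma)^{-a}g^2\!\left(\eta/\sqrt{1-\sigma}\right)\cdot \frac{1}{2\pi\nu t\sigma}\,e^{-\eta^2/(\nu\sigma)},
\]
so the whole expression factors as $t^{1/2-a}$ times a $t$-independent double integral on $[0,1]\times\R$. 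Restricting the latter to $(\sigma,\eta)\in[1/4,1/2]\times[-1,1]$ and using continuity and strict positivity of $g$ on compacts produces a constant $c_0>0$, yielding $\Norm{I(t,0)}_2^2\ge c_0 t^{1/2-a}$; the claim then follows from $\Norm{I(t,0)}_p\ge\Norm{I(t,0)}_2$.

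For part (1), the uniform bound $J_0^2(s,y)\le M^2 s^{-a}$ handles everything. When $p=2$, \eqref{E2:SecMom} reduces the problem to
\[
(J_0^2\star\calK)(t,x)\le M^2\int_0^t s^{-a}\left(\int_\R\calK(t-s,y)\,\ud y\right)\ud s,
\]
and since $\int_\R\calK(\tau,y)\,\ud y=\frac{\lambda^2}{\sqrt{4\pi\nu\tau}}+O(1)$ for $\tau\in(0,T]$, a Beta-function calculation gives $\Norm{I(t,x)}_2^2\le C(t^{1/2-a}+t^{1-a})=O(t^{1/2-a})$, uniform in $x$. For even $p>2$, Lemma \ref{LH:Lp} applied with $\calG=\lambda G_\nu$ and $Z=u$ yields
\[
\Norm{I(t,x)}_p^2\le z_p^2\lambda^2\int_0^t\!\!\int_\R G_\nu^2(t-s,x-y)\,\Norm{u(s,y)}_p^2\,\ud s\,\ud y,
\]
and \eqref{E2:SecMom-Up} combined with the same argument gives $\Norm{u(s,y)}_p^2\le 2J_0^2(s,y)+2(J_0^2\star\widehat{\calK}_p)(s,y)\le Cs^{-a}$, uniformly in $y$. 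The remaining time-space integral evaluates to $O(t^{1/2-a})$ via $\int_0^t s^{-a}(t-s)^{-1/2}\,\ud s=t^{1/2-a}B(1-a,1/2)$, which tends to $0$ since $a<1/2$.

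The chief technical point is the scaling identity for $J_0$ and the verification that the $t$-independent double integral in part (2) is strictly positive; every other step is a routine moment computation plus standard Beta-function estimates, and uniformity in $x$ for part (1) comes automatically from the uniform bound on $g$.
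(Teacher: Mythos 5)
Your proposal is correct, and it takes a genuinely different (and arguably more elementary) route than the paper's. The paper computes $J_0(t,0)$ explicitly via Euler's Gamma integral, establishes $J_0(t,x)\le J_0(t,0)$ via Bochner's theorem (positive-definiteness of the Fourier transform of $|x|^{-a}$), and gets the lower bound in part (2) from the elementary inequality $(x-y)^2\le 2(x^2+y^2)$ followed by direct integration of the resulting Gaussian. You instead exploit the homogeneity of the density $|x|^{-a}$ to obtain the exact scaling identity $J_0(t,x)=t^{-a/2}g(x/\sqrt{t})$, from which the uniform bound $J_0^2(s,y)\le M^2 s^{-a}$ needed for part (1) follows with no Fourier analysis (only continuity and decay of $g$), and the exact $t^{1/2-a}$ prefactor needed for part (2) follows from a clean parabolic change of variables $s=t\sigma$, $y=\sqrt{t}\,\eta$ that factors the double integral. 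Both proofs rest on the same backbone: the second-moment identity \eqref{E2:SecMom} with $\vv=0$, the bound $\calK\ge\lambda^2 G_\nu^2$ coming from the first term of \eqref{E:K}, the Beta-integral $\int_0^t s^{-a}(t-s)^{-1/2}\ud s = t^{1/2-a}B(1-a,1/2)$, and the reduction from general $p$ to $p=2$ (via $\Norm{\cdot}_p\ge\Norm{\cdot}_2$ for the lower bound, and embedding into the smallest even $p'\ge p$ together with \eqref{E2:SecMom-Up} and Lemma \ref{LH:Lp} for the upper bound — which you state in more detail than the paper does). The self-similarity route makes the power of $t$ transparent and avoids the special-function computation; what it gives up is the explicit numerical constants the paper obtains along the way.
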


\begin{proof}
(1) By the moment bounds formulas \eqref{E2:SecMom-Up} and \eqref{E2:SecMom}, 
it 
suffices to consider second moment and  show that
$\lim_{t\rightarrow 0_+}
\Norm{I(t,x)}_2\equiv 0$.
For some constant $C_a>0$, the Fourier transform of $\mu$ is $C_a |x|^{-1 +a}$
(see \cite[Lemma 2 (a), p. 117]{Stein70Singular}), which is non-negative.
Hence Bochner's theorem (see, e.g., \cite[Theorem 1, 
p.152]{GelfandVilenkin64GF4}) implies that $\mu$, and therefore $x\mapsto
J_0(t,x)$, is non-negative definite. Such functions achieves their maximum at
the origin (see, e.g., \cite[Theorem 1, p. 152]{GelfandVilenkin64GF4}), and so
\[0< J_0(t,x) \le J_0(t,0)
 = \int_\R \ud y\: \frac{1}{|y|^a} G_\nu(t,y)
=2 \int_0^{+\infty}\ud y\: \frac{e^{-y^2/(2\nu t)}}{y^a \sqrt{2\pi\nu t}}\: .\]
Then by a change of variable and using Euler's integral (see \cite[5.2.1, 
p.136]{NIST2010}),
\begin{align}\label{E2_:Holder-Ubd-G}
 J_0(t,0)= 2 \int_0^{+\infty}\ud u\: \frac{e^{-u}}{(2\nu t u)^{a/2} 
\sqrt{2\pi\nu t}}
\frac{\sqrt{2\nu t}}{2\sqrt{u}} 
=\frac{\Gamma\left(\frac{1-a}{2}\right)}{\sqrt{\pi} (2\nu t)^{a/2}}\:,
\end{align}
where $\Gamma(x)$ is Euler's Gamma function \cite{NIST2010}.
By \eqref{E2:SecMom} and the above bound,
\begin{align*}
\Norm{I(t,x)}_2^2 &= \left(J_0^2 \star \calK\right)(t,x)
\le
\int_0^t\ud s\:  \left(\frac{\lambda^2}{\sqrt{4\pi\nu (t-s)}} +
\frac{\lambda^4}{2\nu}e^{\frac{\lambda^4 (t-s)}{4\nu}}\right) \frac{C}{s^a} .
\end{align*}
The integral converges if and
only if $a<1$.
Finally, using the Beta integral (see \cite[(5.12.1), p.142]{NIST2010})
\begin{align}
\label{E2:BetaInt}
\int_0^t \ud s\:  s^{\mu-1} (t-s)^{\nu-1} = 
t^{\mu+\nu-1}\frac{\Gamma(\mu)\Gamma(\nu)}{\Gamma(\mu+\nu)},
\quad \text{for $t>0$, $\mu>0$ and $\nu>0$},
\end{align}
we see that $\Norm{I(t,x)}_2^2\le C_1\: t^{1/2-a}+ C_2 \:t^{1-a}$, so
$\lim_{t\rightarrow 0_+}\Norm{I(t,x)}_2^2=0$ when $a<1/2$.

(2) Now consider the function $t\mapsto I(t,0)$ from $\R_+$ to 
$L^p(\Omega)$.
Since $(x-y)^2\le 2(x^2 + y^2)$, as in \eqref{E2_:Holder-Ubd-G},
we see that
\[
J_0(t,x) = \int_\R\ud y\: \frac{1}{|y|^a} G_\nu\left(t,x-y\right)
\ge
\frac{1}{\sqrt{2}}\exp\left(-\frac{x^2}{\nu t}\right)
\frac{\Gamma\left(\frac{1-a}{2}\right)}{\sqrt{\pi}} \frac{1}{(\nu t)^{a/2}}.
\]
Hence,
\[
J_0^2(t,x) \ge
C G_{\nu/2}\left(\frac{t}{2},x\right) t^{1/2-a}.
\]
Since $\calK(t,x)\ge G_{\nu/2}(t,x) \frac{\lambda^2}{\sqrt{4\pi\nu
t}}$ by \eqref{E:K},
\[\Norm{I(t,x)}_2^2
\ge
\frac{C'  \exp\left(-\frac{2 x^2}{\nu
t}\right)}{t} \int_0^t\ud s \:
s^{1/2-a} = C'' \exp\left(-\frac{2 x^2}{\nu
t}\right) t^{\frac{1-2a}{2}}.
\]
If $x=0$, then for all integers $p\ge 2$, since $I(0,x)\equiv
0$,
\[\Norm{I(t,0)- I(0,0)}_p^2\ge \Norm{I(t,0)}_2^2
\ge  C'' \:t^{\frac{1-2a}{2}}.
\]
When $0<a<1/2$, the function $t\mapsto I(t,0)$ from $\R_+$ to $ L^p(\Omega)$
cannot be smoother than $\eta$-H\"older
continuous at $t=0$ with $\eta= \frac{1-2a}{4} \in \:]0,1/4[\:$.
\end{proof}

\section{Proofs of the main results}\label{SH:Proofs}

   Establishing H\"older continuity relies on Kolmogorov's continuity theorem. 
We present a formulation of this result that is suitable for our purposes.

\subsection{Kolmogorov's continuity theorem}

For $x=(x_1,\dots,x_N)$ and $y=(y_1,\dots,y_N)$, define
\begin{align}\label{E2:metric-a}
\tau_{\alpha_1,\dots,\alpha_N}(x,y) := \sum_{i=1}^N
\left|x_i-y_i\right|^{\alpha_i}\;,\quad
\text{with $\alpha_1,\dots,\alpha_N\in \;]0,1]$.}
\end{align}
This defines a metric on $\R^N$ that is not induced by a norm except when 
$\alpha_i=1$ for $i=1,\dots,N$.
We refer the interested readers to \cite[Theorem
4.3]{Khoshnevisan09Mini} or \cite[Theorem 2.1, on p.
62]{RevuzYor99Continuous} for the isotropic case ($\alpha_1=\cdots
=\alpha_N$).
For the anisotropic case (where the $\alpha_i$ are not identical),
see \cite[Theorem 1.4.1,
p. 31]{Kunita90Flow} and \cite[Corollary A.3, p.
34]{DalangKhNualart07HittingAdditive}. We state a version (Proposition 
\ref{P2:LocHolder} below), which is a consequence of these references and is 
convenient for our purposes.

\begin{definition}(H\"older continuity)\label{D2:Holder}
A function $f:D\mapsto \R$ with $D\subseteq \R^N$ is said to be {\it locally 
(and uniformly) H\"older continuous} with
indices $\left(\alpha_1,\dots,\alpha_N\right)$ if for
all compact sets $K\subseteq D$, there exists a constant $A_K$ such that
for all $x,y \in K$, $\left|f(x)-f(y)\right|\le A_K \sum_{i=1}^N
|x_i-y_i|^{\alpha_i}$.
\end{definition}

\begin{proposition}\label{P2:LocHolder}
Let $\left\{X(t,x): \: (t,x)\in \R_+\times\R^d \:\right\}$ be a random field
indexed by $\R_+\times\R^d$. Suppose that there exist $d+1$ constants
$\alpha_i \in \;]0,1]$, $i=0, 1,\dots, d$, such that for all $p> 2(d+1)$  and
all $n>1$,
there is a constant $C_{p,n}$ such that 
\[\Norm{X(t,x)-X\left(s,y\right)}_p
\le C_{p,n}
\tau_{\alpha_0,\dots,\alpha_d}\left((t,x),\left(s,y\right)\right)
\]
for all $(t,x)$ and $(s,y)$ in $K_n:= \left[1/n,n\right] \times
[-n,n]^d$,
where the metric $\tau_{\alpha_0,\dots,\alpha_d}$ is defined in
\eqref{E2:metric-a} with $N=d+1$. Then $X$ has a modification which is locally
H\"older continuous in $\R_+^*\times\R^d$ with indices $\left(\beta 
\alpha_0,\dots,\beta \alpha_d\right)$, for
all $\beta\in \;\left]0,1 \right[$.
If the compact sets $K_n$ can be taken to be $\left[0,n\right]
\times [-n,n]^d$, then the same local H\"older continuity of $X$ extends to
$\R_+\times\R^d$.
\end{proposition}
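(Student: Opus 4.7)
The idea is to reduce the statement to the anisotropic Kolmogorov--Chentsov continuity theorem available in the cited references of Kunita and Dalang--Khoshnevisan--Nualart. Raising the given $L^p$-hypothesis to the $p$-th power and using the elementary inequality $(a_0+\cdots+a_d)^p\le (d+1)^{p-1}\sum_{i=0}^{d}a_i^p$, one obtains, for each $n>1$ and each $p>2(d+1)$, a constant $\widetilde{C}_{p,n}$ such that, with the shorthand $z=(z_0,\dots,z_d)=(t,x_1,\dots,x_d)$,
\[
\E\bigl[|X(z)-X(z')|^p\bigr]\le \widetilde{C}_{p,n}\sum_{i=0}^{d}|z_i-z_i'|^{p\alpha_i}\qquad \text{for all } z,z'\in K_n.
\]

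Next, the anisotropic Kolmogorov--Chentsov theorem (Theorem 1.4.1 of Kunita or Corollary A.3 of Dalang--Khoshnevisan--Nualart), applied on $K_n$ with these exponents, produces a modification that is locally H\"older continuous on $K_n$ with indices $\eta\alpha_0,\dots,\eta\alpha_d$ for every $\eta<1-\sigma/p$, where $\sigma:=\sum_{i=0}^{d}\alpha_i^{-1}$. The mechanism is the standard dyadic chaining: discretize coordinate $i$ with spacing $2^{-k/\alpha_i}$, giving $O(2^{k\sigma})$ points in $K_n$ at level $k$ and nearest-neighbor increments whose $p$-th moment is at most $C\cdot 2^{-kp}$; Markov's inequality followed by Borel--Cantelli then forces the advertised H\"older exponent in the metric $\tau_{\alpha_0,\dots,\alpha_d}$, hence the coordinatewise exponents $\eta\alpha_i$.

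Given any $\beta\in(0,1)$, I would choose $p$ so large that both $p>2(d+1)$ and $p>\sigma/(1-\beta)$ hold; then $1-\sigma/p>\beta$, and the modification is H\"older continuous on $K_n$ with indices $(\beta\alpha_0,\dots,\beta\alpha_d)$. A single modification on $\R_+^*\times\R^d$ is obtained by exhausting the parameter set by $K_1\subset K_2\subset\cdots$ and gluing together the modifications on each $K_n$; local continuity together with uniqueness of modifications up to indistinguishability guarantees consistency on overlaps and delivers a single process carrying the H\"older property for every $\beta\in(0,1)$ simultaneously. The second assertion, valid on $\R_+\times\R^d$, follows from the identical argument applied to the compact sets $[0,n]\times[-n,n]^d$. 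The only substantive step is the anisotropic chaining argument, already treated in full detail in the cited references, so no new technical obstacle arises; the only care needed is to verify that $p$ can always be taken large enough to cover any prescribed $\beta<1$, which is immediate since $\sigma$ is fixed by the hypothesis.
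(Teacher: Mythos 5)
Your argument is correct and follows essentially the same route as the paper: the paper does not supply a proof of Proposition \ref{P2:LocHolder}, but merely states that it is ``a consequence of'' Theorem 1.4.1 of Kunita and Corollary A.3 of Dalang--Khoshnevisan--Nualart, which is exactly the reduction you carry out. You supply a bit more detail than the paper (the chaining sketch with level-$k$ spacing $2^{-k/\alpha_i}$, the threshold $\eta<1-\sigma/p$ with $\sigma=\sum_i\alpha_i^{-1}$, the observation that $p$ can be taken $>\sigma/(1-\beta)$, and the gluing of modifications over the exhaustion $K_1\subset K_2\subset\cdots$), but the underlying mechanism and cited results coincide with the paper's.
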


\subsection{Moment estimates}\label{ss:Holder}

   The main moment estimate that is needed for this proof is the following.
	
\begin{proposition}\label{rd:moment}
Fix $\Vip \in\R$ and $\mu\in\calM_H(\R)$.

{\em (1)} For all $p \geq 2$ and $n>1$, there is a constant $C_{n,p}$ such that 
for
all $t,t'\in [1/n,n]$ and $x,x'\in [-n,n]$,
\begin{equation}\label{rd4.1}
   \Vert I(t,x) - I(t',x')\Vert_p \leq C_{n,p} \left(\vert t - 
t'\vert^{\frac{1}{4}} + \vert x - x'\vert^{\frac{1}{2}} \right).
\end{equation}

{\em (2)} If, in addition, $\mu\in\calM_H^*(\R)$, then there exists a constant 
$C_{n,p}^*$ such 
that for all $(t,x)$, $(t',x')\in [0,n]\times [-n,n]$,  \eqref{rd4.1} holds 
with $C_{n,p}$ replaced by $C_{n,p}^*$.
\end{proposition}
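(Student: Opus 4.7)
My plan is to verify the increment bound \eqref{rd4.1} by combining Lemma \ref{LH:Lp} (the Burkholder--Davis--Gundy inequality for Walsh integrals), the linear growth condition \eqref{EH:LinGrow}, and the moment bound \eqref{E2:SecMom-Up} of Theorem \ref{T2:ExUni}, so as to reduce the probabilistic estimate to deterministic estimates on weighted heat-kernel increments. By the triangle inequality, it suffices to treat the spatial and temporal increments separately. Taking $t=t'$ and writing $\Delta_x G(t-s,y) := G_\nu(t-s,x-y) - G_\nu(t-s,x'-y)$, Lemma \ref{LH:Lp} together with \eqref{EH:LinGrow} and \eqref{E2:SecMom-Up} gives
\[
\Norm{I(t,x)-I(t,x')}_p^2
\leq
C_p \iint_{[0,t]\times\R}
[\Delta_x G(t-s,y)]^2 \left(\Vip^2(1+\widehat{\calH}_p(n))+J_0^2(s,y)+(J_0^2\star\widehat{\calK}_p)(s,y)\right) \ud s\, \ud y.
\]
The unweighted piece equals $c_\nu |x-x'|$ by the classical Gaussian increment estimate, and the two weighted pieces are what require real work.

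\textbf{Handling the $J_0^2$ weight.} For the weighted integral $\iint[\Delta_x G]^2 J_0^2(s,y)\,\ud s\,\ud y$, the plan is to unfold
\[
J_0^2(s,y) = \iint_{\R^2} G_\nu(s,y-z_1)G_\nu(s,y-z_2)\,\mu(\ud z_1)\mu(\ud z_2),
\]
apply Fubini, and use the Gaussian product identity $G_\nu(s,y-z_1)G_\nu(s,y-z_2) = G_{\nu/2}(s,y-\bar z)\,G_{2\nu}(s,z_1-z_2)$, where $\bar z = (z_1+z_2)/2$, to reduce to a deterministic bound of the form
\[
\iint_{[0,t]\times\R}[\Delta_x G(t-s,y)]^2 G_{\nu/2}(s,y-\bar z)\,\ud s\,\ud y
\leq A_n\, e^{a_n \bar z^2}\, |x-x'|,
\]
valid for $(t,x),(t,x')\in[1/n,n]\times[-n,n]$. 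The restriction $t\geq 1/n$ is used to split the $\ud s$-integral at $s=t/2$: on $[0,t/2]$, $t-s\geq 1/(2n)$ gives smoothness and a derivative estimate on $\Delta_x G$, producing a factor $|x-x'|$; on $[t/2,t]$, classical Gaussian cancellation yields the same factor. Integrating the resulting bound against $G_{2\nu}(s,z_1-z_2)\,|\mu|(\ud z_1)|\mu|(\ud z_2)$ and using $\mu\in\calM_H(\R)$ (via \eqref{EH:J0finite}) delivers the desired bound $C_{n,p}|x-x'|$. The $(J_0^2\star\widehat{\calK}_p)$ contribution is handled similarly, with one additional Fubini against $\widehat{\calK}_p(s-r,y-w)$, which is integrable on bounded time intervals by \eqref{E2:H}.

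\textbf{Temporal increment and extension to $t=0$.} For the temporal increment with $x=x'$ and $t<t'$, I decompose
\[
I(t',x)-I(t,x) = \iint_{[0,t]\times\R}[G_\nu(t'-s,x-y)-G_\nu(t-s,x-y)]\rho(u)\,\ud W + \iint_{[t,t']\times\R}G_\nu(t'-s,x-y)\rho(u)\,\ud W,
\]
and treat each piece by the same scheme, invoking the analogue kernel estimates $\iint[G_\nu(t'-s,x-y)-G_\nu(t-s,x-y)]^2\,\ud s\,\ud y = c_\nu|t-t'|^{1/2}$ and $\iint_{[t,t']\times\R}G_\nu(t'-s,x-y)^2\,\ud s\,\ud y = c_\nu|t-t'|^{1/2}$, again weighted against $J_0^2$ and $J_0^2\star\widehat{\calK}_p$. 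For part (2), $\mu\in\calM_H^*(\R)$ implies that $J_0(s,y)$ is continuous up to $s=0$ and bounded by a constant $C_n$ uniformly on $[0,n]\times[-n,n]$ (an appendix-level estimate), so the weighted integrals remain finite and admit the same bounds with $t=0$ allowed, yielding \eqref{rd4.1} on $[0,n]\times[-n,n]$.

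\textbf{Main obstacle.} The crux is the weighted integral $\iint[\Delta_x G]^2 J_0^2\,\ud s\,\ud y$ near $s=0$, where $J_0(s,y)$ may blow up (for instance $J_0(s,y)\asymp s^{-1/2}$ when $\mu=\delta_0$). The standard trick of pulling $\Norm{u(s,y)}_p$ out of the stochastic integral (Remark \ref{RH:Standard}) fails here because $\Norm{u(s,y)}_p^2$ is not integrable in $\ud s$ at the origin; one must instead keep the convolution structure of $J_0^2$ intact and exploit the Gaussian semigroup identity so that the $(s,y)$-integration absorbs the singularity before the $\mu\otimes\mu$ integration is taken. This is the single step whose implementation will consume most of the technical work.
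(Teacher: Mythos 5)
Your overall strategy is the same as the paper's: use Lemma \ref{LH:Lp} and the moment bound \eqref{E2:SecMom-Up} to reduce to deterministic estimates on $\iint (G-G)^2\,h$ with $h = \Vip^2 + 2(J_0^*)^2$ or $h = (\Vip^2 + 2(J_0^*)^2)\star G_\nu^2$, unfold $J_0^2$ into a $\mu\otimes\mu$ double integral, apply the Gaussian product identity and Fubini, and close by the finiteness of $\int G|\mu|(\ud z)$. That is exactly what Propositions \ref{P2:Holder-I} and \ref{P2:Holder-II} do. However, two of the steps you sketch contain genuine gaps.

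First, your proposed key estimate
\[
\iint_{[0,t]\times\R}[\Delta_x G(t-s,y)]^2 G_{\nu/2}(s,y-\bar z)\,\ud s\,\ud y \leq A_n\, e^{a_n \bar z^2}\, |x-x'|
\]
cannot close the argument. Two issues. (a) The factor $G_{2\nu}(s,z_1-z_2)$, which depends on the integration variable $s$, has been dropped from the inner integral: it cannot be pulled outside the $\ud s$-integral and reinserted later, since it blows up as $s\to 0$ when $z_1$ and $z_2$ are close. (b) More seriously, the right-hand side \emph{grows} in $\bar z$. For $\mu\in\calM_H(\R)$ the total variation $|\mu|$ is allowed any subgaussian growth, so $\iint e^{a_n\bar z^2}\,|\mu|(\ud z_1)|\mu|(\ud z_2)$ is typically infinite — the bound is true but useless. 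What one actually needs is a right-hand side that \emph{decays} like a Gaussian in $z_1$ and $z_2$, e.g.\ $C_n\,G_{2\nu(1+n^2)}(t,x-z_1)\,G_{2\nu(1+n^2)}(t,x-z_2)\,|x-x'|$, so that the $|\mu|\otimes|\mu|$ integral is controlled by $(|\mu|*G(t,\cdot))^2(x)<\infty$. Producing this requires keeping the $G_{2\nu}(s,z_1-z_2)$ factor in play and recombining all three Gaussians (via Lemma \ref{LH:GG}, the semigroup, Lemma \ref{LH:Split}, Lemma \ref{LH:IntGG}, and the ratio estimate Lemma \ref{L2:gtxr}), which is where the restriction $t\geq 1/n$ enters in the paper — not, as you suggest, via a $t/2$-split to obtain derivative bounds. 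The $t/2$-split on its own does not give the needed $z$-decay.

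Second, your argument for part (2) is incorrect. You argue that $\mu\in\calM_H^*(\R)$ makes $J_0(s,y)$ bounded on $[0,n]\times[-n,n]$, hence the weighted integrals stay finite for $t=0$. But the weighted integrals
\[
\iint_{[0,t]\times\R}[\Delta_x G(t-s,y)]^2 J_0^2(s,y)\,\ud s\,\ud y
\]
have the $y$-variable ranging over all of $\R$, not $[-n,n]$, and for a density bounded by $e^{|x|^a}$ with $a\in\;]1,2[\;$ the function $y\mapsto J_0(s,y)$ grows exponentially. Boundedness on a compact set tells you nothing about this. Moreover, even if the integrals were finite at $t=0$, part (1)'s constants blow up as $t\downarrow 0$, because the Gaussian ratio estimate (Lemma \ref{L2:gtxr}, and the $\sup_{s\in[1/n,n]}$ in the definition of $C_n'$, $C_n''$) degenerates near $s=0$. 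The paper handles part (2) by a separate argument: it works directly with the density bound $|f(z)|\le c\,e^{|z|^a}$, exploits the elementary convexity inequality $e^{|z_1|^a+|z_2|^a}\le e^{|\Delta z|^a}\,e^{2^a|\bar z|^a}$, and proves new Lipschitz-type estimates (Lemma \ref{LH:Delta-g}) for the weight $g(x)=e^{c|x|^a}$ under spatial and temporal scalings of the Gaussian. That step is absent from your proposal and is not reducible to boundedness on compacts.
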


   The proof of this proposition will be given at the end of this section. We 
note that by Proposition \ref{P:Holder-Ubd}, the conclusion in part (2) above 
is not valid for all $\mu\in\calM_H(\R)$.
	
	Assuming Proposition \ref{rd:moment}, we now prove Theorem \ref{T2:Holder}.
	
\begin{proof}[Proof of Theorem \ref{T2:Holder}]
By Lemma \ref{LH:J0Cont}, we only need to establish the H\"older-continuity 
statements for $I$ instead of $u$. Part (1) (respectively (2)) follows from 
Proposition \ref{rd:moment}(1) (respectively Proposition \ref{rd:moment}(2)) 
and Proposition \ref{P2:LocHolder}. This proves Theorem \ref{T2:Holder}.
\end{proof}

   The next two propositions are needed to establish Proposition 
\ref{rd:moment}. 
	
\begin{proposition} \label{P2:Holder-I}
Given $\Vip \in\R$ and $\mu\in\calM_H(\R)$,
let $J_0^*(t,x)= \left(|\mu|*G_\nu(t,\cdot)\right)(x)$ and
$h(t,x)=\Vip^2+2 \left[J_0^*(t,x)\right]^2$.
Then we have:\\
{\em (1)} For all $n>1$, there exist constants $C_{n,i}$, $i=1,3,5$, such 
that for all $t,t'\in [1/n,n]$, with $t< t'$, and $x,x'\in [-n,n]$,
\begin{gather}\label{E2:H1}
 \iint_{\left[0,t\right]\times\R}\ud s\ud y\:
h(s,y)
\left(
G_\nu\left(t-s,x-y\right)-G_\nu(t'-s,x-y)
\right)^2
 \le C_{n,1} \sqrt{t'-t},\\
\label{E2:H3}
 \iint_{[0,t]\times\R} \ud s\ud y\: h(s,y)
\left(
G_\nu\left(t-s,x-y\right)-G_\nu(t-s,x'-y)
\right)^2
 \le C_{n,3} \left|x-x'\right|,\\
\label{E2:H5}
\iint_{[t,t']\times\R}\ud s\ud y\: h(s,y)
G_\nu^2 (t'-s,x'-y)  \le C_{n,5} \sqrt{t'-t}\:.
\end{gather}
{\em (2)} If, in addition, $\mu\in\calM_H^*(\R)$, then 
there  exist constants $C_{n,i}^*$, $i=1,3,5$, such 
that for all $(t,x),(t',x')\in [0,n]\times 
[-n,n]$, \eqref{E2:H1}--\eqref{E2:H5} hold  with $C_{n,i}$ replaced by 
$C_{n,i}^*$, $i=1,3,5$.
\end{proposition}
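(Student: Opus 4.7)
The plan is to bound each of \eqref{E2:H1}--\eqref{E2:H5} by splitting $h(s,y)\le \Vip^2 + 2[J_0^*(s,y)]^2$ and treating the two summands separately. The $\Vip^2$ contribution is the classical estimate: each of the three integrals without the $h$-factor equals a constant multiple of $\sqrt{t'-t}$ or $|x-x'|$, by explicit $y$-integration and elementary manipulations in $s$ (essentially the argument of \cite[Corollary 3.4]{Walsh86}). This settles the $\Vip^2$ part.

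The main work is the $[J_0^*]^2$ contribution. Using the representation
\begin{align*}
[J_0^*(s,y)]^2 = \iint_{\R^2} G_\nu(s,y-z_1) G_\nu(s,y-z_2)\, |\mu|(\ud z_1) |\mu|(\ud z_2),
\end{align*}
and Fubini, each of the three estimates reduces to controlling
\begin{align*}
\iint_{\R^2} |\mu|(\ud z_1)|\mu|(\ud z_2) \int_0^{t\vee t'} \ud s \int_\R G_\nu(s,y-z_1) G_\nu(s,y-z_2)\, \Psi(s,y) \ud y,
\end{align*}
where $\Psi$ is either $G_\nu^2(t'-s,x'-y)$ or one of the squared kernel differences. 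The inner $y$-integral is a three-Gaussian (respectively four-Gaussian) integral that by completing the square evaluates to $A(s,t,t',x,x')\exp(-Q_s(z_1,z_2))$, with $Q_s$ a positive-definite quadratic form in $(z_1,z_2)$ whose eigenvalues are bounded below, on the compact parameter range $t,t'\in[1/n,n]$, $x,x'\in[-n,n]$, uniformly in $s\in[0,n]$ by a positive constant $c_n$. The amplitude $A(s,t,t',x,x')$ inherits at most an $s^{-1/2}$ singularity near $s=0$ (illustrated by $\mu=\delta_0$, where $[J_0^*(s,0)]^2=(2\pi\nu s)^{-1}$). The defining condition $\int e^{-a z^2}|\mu|(\ud z)<\infty$ of $\calM_H(\R)$ then provides a uniform bound $\iint \exp(-c_n(z_1^2+z_2^2)) |\mu|(\ud z_1)|\mu|(\ud z_2) \le C_n$, after which the remaining $s$-integral is of the same type as in the $\Vip^2$ step (at worst an extra $s^{-1/2}$ factor, which is still integrable on $[0,t]$) and yields the required $\sqrt{t'-t}$ or $|x-x'|$ dependence.

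For Part (2), under $\mu\in\calM_H^*(\R)$, Lemma \ref{LH:MHR} shows that $J_0^*$ is bounded on the strip $[0,n]\times[-n,n]$ by a constant $K_n$, so $h$ is bounded there by $\Vip^2+2K_n^2$. Each of \eqref{E2:H1}--\eqref{E2:H5} then reduces directly to the classical $\Vip^2$ estimate scaled by this constant, and the parameter range extends down to $t=0$ without further work.

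The main obstacle is the Gaussian bookkeeping for the kernel-difference integrals \eqref{E2:H1} and \eqref{E2:H3}: bounding the squared difference term by term would produce a non-integrable $s^{-1/2}$ singularity multiplied by an $O(1)$ constant rather than by $\sqrt{t'-t}$ or $|x-x'|$, destroying the H\"older gain. One must instead exploit the cancellation, either by writing $G_\nu(t-s,\cdot)-G_\nu(t'-s,\cdot)=\int_{t-s}^{t'-s} \partial_r G_\nu(r,\cdot)\ud r$ and bounding the integrand uniformly in $r$ on the compact range, or by carrying out the full Gaussian integral in $y$ in one go and reading off the $t'-t$ (respectively $x-x'$) factor from the closed-form expression; verifying that this gain survives integration against the singular weight $[J_0^*(s,y)]^2$ is the delicate point.
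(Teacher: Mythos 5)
Your Part (1) outline matches the paper's strategy in broad strokes: split $h$ into $\Vip^2$ plus $2[J_0^*]^2$, handle the constant part via the classical Walsh-type estimates (the paper's Proposition \ref{PH:G}), represent $[J_0^*(s,y)]^2$ as a double integral against $|\mu|$, do the $y$-integral in closed form via Lemma \ref{LH:GG}, and then exploit cancellation in the $s$-integral. However, you explicitly defer the ``delicate point'' of verifying that the gain survives integration against the singular weight, and this is precisely where the proof's content lies. In particular, you do not observe \emph{why} Part (1) is stated only for $t,t'\in[1/n,n]$: the paper's computation of the amplitude term $I_1$ produces the bound $\pi - 2\arctan(\sqrt{t/h}) \le 2\sqrt{h/t}$, i.e.\ a factor $\sqrt{(t'-t)/t}$, which only yields the advertised $C_{n,1}\sqrt{t'-t}$ because $t\ge 1/n$. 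Likewise the estimate \eqref{E2:H3} applies Lemma \ref{LH:GG2G} with $C_n' = C\sqrt{n}/\sqrt{2\nu}+1/n$, which again requires $t\ge 1/n$. So the sketch, while aimed in the right direction, has not done the part of the argument that makes the statement true as stated.

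Part (2) contains a genuine error. You claim that since $\mu\in\calM_H^*(\R)$ implies $J_0^*$ is bounded on the strip $[0,n]\times[-n,n]$, one can ``reduce directly to the classical $\Vip^2$ estimate scaled by this constant.'' But the weight $h(s,y)$ appears inside an integral over $y\in\R$, not merely over $y\in[-n,n]$, and for $\mu\in\calM_H^*(\R)$ with density $|f(x)|\le c\,e^{|x|^a}$ ($a\in\;]1,2[\:$), the function $J_0^*(s,y)$ grows like $e^{c'|y|^a}$ as $|y|\to\infty$ for each fixed $s>0$. So you cannot pull $h(s,y)$ out of the $\ud y$-integral as a constant, and the reduction to Proposition \ref{PH:G} fails. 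The paper's actual proof of Part (2) is a separate and substantial argument: it bounds $e^{|z_1|^a+|z_2|^a}\le e^{|w|^a}g(z)$ after changing to $z=\bar z$, $w=\Delta z$, integrates the cross-variable to get the constant $K_{a,1}(2\nu n)$, and then requires the increment estimates of Lemma \ref{LH:Delta-g} (Lipschitz control of $g(x-\sqrt{t}\,z)$ in $t$ and in $x$) to handle the terms analogous to $I_2$, $I_3$; it also reworks the $I_1$ computation so that the $\sqrt{h}$ bound is obtained with constants uniform in $t\in[0,n]$, rather than the $\sqrt{h/t}$ bound of Part (1). None of this is captured by, or reducible to, a bound on $h$ over the compact box.
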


\begin{proposition} \label{P2:Holder-II}
Given $\Vip \in\R$ and $\mu\in\calM_H(\R)$,
let $J_0^*(t,x)= \left(|\mu|*G_\nu(t,\cdot)\right)(x)$.
Then:\\
{\em (1)} For all $n>1$, there
exist three constants
\begin{gather}\label{E2:Cn246}
 C_{n,2} = \frac{\sqrt{\pi n}}{\sqrt{4\nu}} C_{n,1},\quad
C_{n,4} = \frac{\sqrt{\pi n}}{\sqrt{4\nu}} C_{n,3},\quad\text{and}\quad
C_{n,6} = \frac{\sqrt{\pi n}}{\sqrt{4\nu}} C_{n,5},
\end{gather}
such that for all $t,t'\in [1/n,n]$ with $t< t'$ and $x,x'\in [-n,n]$,
\begin{gather}
\label{E2:H2}
\left|\left(\left(\Vip^2+2\left|J_0^*\right|^2\right) \star G_\nu^2 \star
\left(G_\nu(\cdot,\circ)-G_\nu(\cdot+t'-t,\circ)\right)^2 \right)(t,x)\right|
\le C_{n,2}\sqrt{t'-t},\\
\label{E2:H4}
\left|\left(\left(\Vip^2+2\left|J_0^*\right|^2\right) \star G_\nu^2 \star
\left(G_\nu(\cdot,\circ)-G_\nu(\cdot,\circ+x'-x)\right)^2 \right)(t,x)\right|
\le C_{n,4}|x'-x|,\\
\label{E2:H6}
\iint_{[t,t']\times\R}
\ud s \ud y \left(\left(\Vip^2+2\left|J_0^*\right|^2\right) \star
G_\nu^2 \right) \left(s,y\right)
G_\nu^2(t'-s,x'-y) 
\le C_{n,6}\sqrt{t'-t}.
\end{gather}
{\em (2)} If, in addition, $\mu\in\calM_H^*(\R)$, then 
there  exist constants 
\begin{gather*}
 C_{n,2}^* = \frac{\sqrt{n}}{\sqrt{\pi\nu}} C_{n,1}^*,\quad
C_{n,4}^* = \frac{\sqrt{n}}{\sqrt{\pi\nu}} C_{n,3}^*,\quad\text{and}\quad
C_{n,6}^* = \frac{\sqrt{n}}{\sqrt{\pi\nu}} C_{n,5}^*,
\end{gather*}
 such  that for all $(t,x),(t',x')\in [0,n]\times 
[-n,n]$, \eqref{E2:H2}--\eqref{E2:H6} hold  with $C_{n,i}$ replaced by 
$C_{n,i}^*$, $i=2,4,6$.
\end{proposition}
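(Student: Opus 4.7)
The plan is to reduce \eqref{E2:H2}--\eqref{E2:H6} to \eqref{E2:H1}--\eqref{E2:H5} of Proposition \ref{P2:Holder-I} by peeling off the extra $G_\nu^2$ factor. Writing $h(s,y):=\Vip^2+2|J_0^*(s,y)|^2$ and letting $f$ denote, in turn, each of the three kernels appearing on the left of \eqref{E2:H2}, \eqref{E2:H4}, and \eqref{E2:H6} (with \eqref{E2:H6} rewritten as a triple convolution via the indicator $\mathbf{1}_{[0,t'-t]}$ in the time variable), the associativity and commutativity of the simultaneous space-time convolution $\star$ yield
\begin{equation*}
(h \star G_\nu^2 \star f)(t,x) \;=\; \int_0^t\!\int_\R G_\nu^2(t-s,x-y)\,(h \star f)(s,y)\,ds\,dy.
\end{equation*}
Proposition \ref{P2:Holder-I} provides a pointwise bound on $(h \star f)(s,y)$ of the form $C_{n,1}\sqrt{t'-t}$, $C_{n,3}|x'-x|$, or $C_{n,5}\sqrt{t'-t}$ on the relevant compact set. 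Inserting this bound and using
\begin{equation*}
\int_0^t\!\int_\R G_\nu^2(t-s,x-y)\,ds\,dy \;=\; \int_0^t\frac{ds}{2\sqrt{\pi\nu(t-s)}} \;=\; \frac{\sqrt{t}}{\sqrt{\pi\nu}} \;\le\; \frac{\sqrt{n}}{\sqrt{\pi\nu}}
\end{equation*}
then produces bounds of exactly the form announced.

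For Part (2), $\mu\in\calM_H^*(\R)$ has a density majorised by $c_1 e^{c_2|x|^a}$ with $a\in\;]1,2[\;$, so $J_0^*(s,y)$ exhibits only subGaussian growth in $y$. The Gaussian decay of the kernels inside $f$ easily dominates this growth, and a direct extension of the argument proving Proposition \ref{P2:Holder-I}(2) shows that the pointwise bounds on $(h \star f)(s,y)$ hold uniformly for $(s,y)\in[0,n]\times\R$. Combining with the display above gives $C_{n,2}^*=(\sqrt{n}/\sqrt{\pi\nu})\,C_{n,1}^*$ exactly as announced, and analogously for $C_{n,4}^*$ and $C_{n,6}^*$.

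For Part (1), where $\mu\in\calM_H(\R)$ is general, the main obstacle is that $(h \star f)(s,y)$ need not be uniformly bounded in $y$, and Proposition \ref{P2:Holder-I}(1) only controls it on $[1/n,n]\times[-n,n]$. I would split the outer integration region $[0,t]\times\R$ into a "good" subset (where Proposition \ref{P2:Holder-I}(1) applies directly) and a "bad" subset consisting of those $(s,y)$ with $s<1/n$ or $|y|>n$. On the bad subset the Gaussian decay of $G_\nu^2(t-s,x-y)$ in $|x-y|$ absorbs the rapid growth of $J_0^*$ via the defining property $\int e^{-ay^2}|\mu|(dy)<+\infty$ for all $a>0$ of $\calM_H(\R)$, while the smallness of the interval $[0,1/n)$ controls the contribution from small times. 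Tracking the constants across this case split produces the slightly enlarged prefactor in $C_{n,2}=(\sqrt{\pi n}/\sqrt{4\nu})\,C_{n,1}$; the same scheme, with the analogous input from \eqref{E2:H3} or \eqref{E2:H5}, handles \eqref{E2:H4} and \eqref{E2:H6}.
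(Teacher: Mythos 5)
Your approach for part (2) --- peeling off the outer $G_\nu^2$ via commutativity and associativity of $\star$, applying Proposition \ref{P2:Holder-I}(2) to the inner convolution $(h\star f)(s,y)$, and integrating $G_\nu^2$ to obtain the factor $\sqrt{n}/\sqrt{\pi\nu}$ --- is in fact exactly what the paper does. But your claim that ``the pointwise bounds on $(h\star f)(s,y)$ hold uniformly for $(s,y)\in[0,n]\times\R$'' is false as stated: the constant $C_{n,1}^*$ (and its precursor $\widetilde C_{n,1}^*$ in the paper's proof) is built from a supremum of $(e^{2^a|\cdot|^a}*G_{\nu/2}(s,\cdot))(y)$ over $y\in[-n,n]$, a quantity which grows without bound as $|y|\to\infty$. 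The Gaussian decay of the outer $G_\nu^2(t-s,x-y)$ does dominate the subGaussian growth in $y$ so that the integral converges, but this is an extra step you are sweeping into ``easily dominates,'' and without carrying it out the announced constant $\sqrt{n}/\sqrt{\pi\nu}\,C_{n,1}^*$ is not obtained.

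For part (1) the gap is more serious, and here your route diverges from the paper's. The paper does not split $[0,t]\times\R$ into good and bad regions. Instead it writes $|J_0^*|^2$ as a double integral of Gaussians via Lemma \ref{LH:GG}, uses the semigroup property together with Lemma \ref{LH:IntGG} to integrate the interleaved $G_\nu^2$ factor against one of those Gaussians, and obtains (see \eqref{E2:Itxtx2}) the factor $\sqrt{\pi n}/\sqrt{4\nu}$ multiplying exactly the integral \eqref{E2:Itxtx} that was bounded in the proof of Proposition \ref{P2:Holder-I}(1). No pointwise control of $(h\star f)$ away from $[1/n,n]\times[-n,n]$ is ever needed. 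Your sketch, by contrast, must control $(h\star f)(s,y)$ on the ``bad'' region. For $s<1/n$ nothing in Proposition \ref{P2:Holder-I}(1) gives you any bound at all, and for general $\mu\in\calM_H(\R)$ the function $h(s,y)=\Vip^2+2[J_0^*(s,y)]^2$ can blow up as $s\to 0$ (take $\mu=\delta_0$, so $h(s,0)\sim s^{-1}$), so ``the smallness of the interval $[0,1/n)$'' does not by itself control that contribution while preserving the required $\sqrt{t'-t}$ rate. Similarly, for $|y|>n$ there is no established bound on $(h\star f)(s,y)$; invoking ``the Gaussian decay absorbs the growth'' is an assertion, not an estimate. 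In short, your plan replaces the paper's computation with an unexecuted case split precisely in the regime where the real work lies, and the claimed constant $\sqrt{\pi n}/\sqrt{4\nu}\,C_{n,1}$ would not emerge from your $\int\!\!\int G_\nu^2 = \sqrt{t}/\sqrt{\pi\nu}$ accounting even if the split were carried out.
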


The proofs of these two propositions are given in the Sections \ref{sec4.3} and 
\ref{sec4.4}. Assuming Propositions  \ref{P2:Holder-I} and \ref{P2:Holder-II}, 
we now prove Proposition \ref{rd:moment}.

\begin{proof}[Proof of Proposition \ref{rd:moment}]
We first prove part (1). 
Without loss of generality, assume  that $\mu\ge 0$. Otherwise, we simply 
replace $\mu$ in the following arguments by
$|\mu|$. Fix $n>1$. By parts (1) of Propositions \ref{P2:Holder-I} and
\ref{P2:Holder-II}, there exist $C_{n,i}>0$ for
$i=1,\dots,6$ such that for all $(t,x)$ and $\left(t',x'\right)\in [1/n,n]\times
[-n,n]$ with $t'>t$, the six inequalities in Propositions  \ref{P2:Holder-I}
and \ref{P2:Holder-II} hold. By \eqref{EH:LinGrow} and Lemma \ref{LH:Lp},
for all even integers $p\geq 2$,
\begin{align*}
  \Norm{I(t,x)-I\left(t',x'\right)}_p^p
\le  2^{p-1} z_p^p \Lip_\rho^p I_1\left(t,t',x,x'\right)^{p/2}
  +
  2^{p-1} z_p^p  \Lip_\rho^p I_2\left(t,t'\:;\:x'\right)^{p/2}\;,
\end{align*}
where
\begin{gather}\label{EH:LH1}
I_1\left(t,t',x,x'\right)=\iint_{\left[0,t\right]\times\R}\ud s \ud y\:
  \left(G_\nu\left(t-s,x-y\right)-G_\nu(t'-s,x'-y) \right)^2 \left[\Vip^2
+\Norm{u\left(s,y\right)}_p^2\right],\\
I_2\left(t,t'\:;\:x'\right)
  = \iint_{\left[t,t'\right]\times\R}\ud s \ud y\:
G_\nu^2\left(t'-s,x'-y\right)\left(\Vip^2+\Norm{u\left(s,y\right)}
_p^2\right).
\label{EH:LH2}
\end{gather}
By the subadditivity of $x\mapsto |x|^{2/p}$ and
since $2^{2(p-1)/p}\le 4$,
\[
\Norm{I(t,x)-I\left(t',x'\right)}_p^2 \le
4 z_p^2 \Lip_\rho^2 \left[ 
I_1\left(t,t',x,x'\right)+I_2\left(t,t'\:;\:x'\right)
\right].
\]
Notice that
\[\calK \left(t,x;\nu, \lambda\right)=
\Upsilon\left(t;\nu,\lambda\right) \;
G_{\nu}^2(t,x),\quad\text{with $\Upsilon(t;\nu,\lambda)=
 \lambda^2+\lambda^4 \sqrt{\frac{\pi t}{\nu}}
e^{\frac{\lambda^4 t}{4\nu}}
\Phi\left(\lambda^2 \sqrt{\frac{t}{2\nu}}\;\right)$.}
\]
Denote $\Upsilon_*(t) :=
\Upsilon\left(t\;;\;\nu, \:a_{p,\Vip}z_p\Lip_\rho\right)<+\infty$, for
all $t\in\R_+$.
Clearly, $\Upsilon_*(t)\le \Upsilon_*(n)$ for $t\le n$.
Hence, it follows from \eqref{E2:SecMom-Up} and
\eqref{E2:H} that
\begin{align}\label{EH:BddMom}
\Norm{u\left(s,y\right)}_p^2
\le
2\: J_0^2\left(s,y\right)+
\Upsilon_*(n)\left(\left(\Vip^2+2 \: J_0^2\right)\star
G_\nu^2\right)(s,y),\quad\text{for $s\le t\le n$.}
\end{align}
We shall use this bound in order to estimate $I_1$
and $I_2$.

We first consider the case where $x=x'$. Set $h=t'-t$. Then
\begin{align*}
I_1(t,t',x,x) \le& \left(\left(\Vip^2+2J_0^2\right)\star
\left(G_\nu(\cdot,\circ)-G_\nu(\cdot+h,\circ)\right)^2\right)(t,x)\\
&+ \Upsilon_*(n) \left(\left(\Vip^2+2J_0^2\right)\star G_\nu^2
\star
\left(G_\nu(\cdot,\circ)-G_\nu(\cdot+h,\circ)\right)^2\right)(t,x).
\end{align*}
By parts (1) of Propositions \ref{P2:Holder-I} and \ref{P2:Holder-II}, 
\[
I_1(t,t',x,x)\le \left(C_{n,1}+\Upsilon_*(n)  C_{n,2} \right)\:
|h|^{1/2}.
\]
Similarly, we have that
\begin{align*}
I_2\left(t,t'\:;\:x'\right)\le \left(C_{n,5} + \Upsilon_*(n) C_{n,6}\right)\:
|h|^{1/2}.
\end{align*}
Hence, for all $x\in [-n,n]$ and $1/n\le t<t'\le n$,
\begin{align}\label{E2_:CaseIHold}
\Norm{I(t,x)-I(t',x)}_p^2
\le&\:
4 z_p^2 \Lip_\rho^2
\left(C_{n,1}+C_{n,5}+\Upsilon_*(n)\left(C_{n,2}+C_{n,6}\right)\right) \:
\left|t'-t \right|^{1/2}\:.
\end{align}

Now consider the case where $t=t'\ge 1/n$. Denote $\zeta=x'-x$.
In this case, $I_2=0$. By \eqref{EH:BddMom} above and parts (1) of
Propositions  \ref{P2:Holder-I} and \ref{P2:Holder-II}, 
\[\Norm{I(t,x)-I\left(t,x'\right)}_p^2\le 4 z_p^2 \Lip_\rho^2 \: \left[ C_{n,3}+
\Upsilon_*(n)
C_{n,4}\right]\:|\zeta|.
\]
Combining this with \eqref{E2_:CaseIHold}, we see that
\[\Norm{I(t,x)-I\left(t',x'\right)}_p^2
\le
\widetilde{C}_{p,n} \left(\left|t'-t\right|^{1/2}+
\left|x'-x\right|\right),
\]
for all $1/n\le t <t'\le n$, $x,x'\in
[-n,n]$,  where $\widetilde{C}_{p,n}$ is a finite constant.
This proves (1).

The conclusion in part (2) can be proved in the same way by applying parts (2) 
of 
Propositions \ref{P2:Holder-I} and \ref{P2:Holder-II} below instead of parts 
(1). We simply replace all $C_{n,i}$
above by $C_{n,i}^*$ for $i=1,\dots,6$. The remaining statements follow 
immediately. This completes the proof of Proposition \ref{rd:moment}.
\end{proof}

\begin{remark}[Case of bounded initial data]\label{RH:BddHolder}
In the case where the initial data is bounded: $\mu(\ud x)= f(x)\ud x$, 
where $f$ is a bounded function such that $|f(x)|\le C$, 
the conclusions of Proposition \ref{rd:moment} follow from the following 
standard 
(and much simpler) argument:
By \eqref{EH:LpBd}, for $0\le t\le 
t'\le T$, and $x,x'\in\R$
\[
I_1(t,t',x,x') \le A_T \iint_{[0,t']\times\R}  \ud s \ud y\;
\left(G_\nu(t-s,x-y)-G_\nu(t'-s,x'-y)\right)^2,
\]
where $I_1(t,t',x,x')$ is defined in \eqref{EH:LH1} and $A_T$ is a finite 
constant.
Then by Proposition \ref{PH:G}, for some constant $C'>0$ depending only 
on 
$\nu$,
\[
I_1\left(t,t',x,x'\right) \le A_T C' \left(\left|x-x'\right|
+\sqrt{\left|t'-t\right|}\right)\;.
\]
Similarly, $I_2\left(t,t',x,x'\right)$, defined in 
\eqref{EH:LH2}, is bounded by $ A_T C'
\sqrt{\left|t'-t\right|}$ with
the same constants $A_T$ and $C'$.
Therefore, 
\begin{align*}
\Norm{I(t,x)-I\left(t',x'\right)}_p^2
\le
4 z_p^2 A_T C' \left( \left|x-x'\right|+ |t-t'|^{1/2}\right)\;,
\end{align*}
for all $0\le t\le t \le  T$ and $x,x'\in\R$.
The H\"older continuity follows from Proposition
\ref{P2:LocHolder}.
\end{remark}

\subsection{Proofs of part (1)  of the Propositions \ref{P2:Holder-I} and 
\ref{P2:Holder-II}} \label{sec4.3}

\begin{lemma}\label{LH:GG2G}
For all $L>0$, $\beta\in \;]0,1[$\:, $t>0$, $x\in\R$, $\nu>0$, and $h$ with 
$|h|\le \beta L$,
we have that
\begin{multline*}
\left|G_\nu(t,x+h) - G_\nu(t,x)
\right| \\
\le
 |h| \left(\frac{C}{\sqrt{2\nu
t}}+\frac{1}{(1-\beta)L}\right)
\left[
G_{\nu}(t,x) +
e^{\frac{3L^2}{2\nu t}} \left\{G_\nu \left(t,x-2L\;\right)
+
G_\nu \left(t,x+2L\;\right)\right\}
\right]
\end{multline*}
and
\begin{multline*}
\left|G_\nu(t,x+h)+G_\nu(t,x-h)
-2 G_\nu(t,x)
\right|  \\ 
\le
2 |h| \left(\frac{C}{\sqrt{2\nu
t}}+\frac{1}{(1-\beta)L}\right)
\left[
G_{\nu}(t,x) +
e^{\frac{3L^2}{2\nu t}} \left\{G_\nu \left(t,x-2L\;\right)
+
G_\nu \left(t,x+2L\;\right)\right\}
\right],
\end{multline*}
where $C:=\sup_{x\in\R} \frac{1}{|x|} |e^{-x^2/2}-1| \approx
0.451256$.
\end{lemma}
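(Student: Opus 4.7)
Both inequalities reduce to a single pointwise bound on the spatial derivative $\partial_y G_\nu$. The second inequality follows from the first by the triangle inequality
\[
\left|G_\nu(t,x+h) + G_\nu(t,x-h) - 2G_\nu(t,x)\right| \leq \left|G_\nu(t,x+h) - G_\nu(t,x)\right| + \left|G_\nu(t,x-h) - G_\nu(t,x)\right|,
\]
since the first-difference bound is invariant under $h \mapsto -h$; this accounts for the factor $2$ in the second statement. So I focus on the first.

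The fundamental theorem of calculus gives $|G_\nu(t,x+h) - G_\nu(t,x)| \leq |h|\sup_{|\eta|\leq \beta L}|\partial_y G_\nu(t,x+\eta)|$, and using $\partial_y G_\nu(t,y) = -\tfrac{y}{\nu t}G_\nu(t,y)$ the task reduces to the pointwise estimate
\[
\frac{|y|}{\nu t}G_\nu(t,y) \leq \left(\frac{C}{\sqrt{2\nu t}} + \frac{1}{(1-\beta)L}\right)\left[G_\nu(t,x) + e^{3L^2/(2\nu t)}\left\{G_\nu(t,x-2L) + G_\nu(t,x+2L)\right\}\right]
\]
valid for every $y \in [x-L, x+L]$.

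I would combine two complementary ingredients. For the bracketed kernel sum, case analysis based on whether $|y|\geq |x|$ or $|y|<|x|$ yields the inequality $G_\nu(t,y)\leq G_\nu(t,x) + e^{3L^2/(2\nu t)}[G_\nu(t,x-2L) + G_\nu(t,x+2L)]$: Gaussian monotonicity handles the first case directly (with coefficient $1$), while in the second case $y$ lies between the origin and $x$, and an elementary expansion shows that $(x - 2\sgn(x)L)^2 - y^2 \leq 3L^2$, so that $G_\nu(t,y) \leq e^{3L^2/(2\nu t)}G_\nu(t, x-2\sgn(x)L)$. For the coefficient $|y|/(\nu t)$, I would split the $y$-range at $(1-\beta)L$: on $\{|y|\leq (1-\beta)L\}$, the substitution $z = y/\sqrt{\nu t}$ in the defining property $|e^{-z^2/2}-1|\leq C|z|$ produces the contribution $\tfrac{C}{\sqrt{2\nu t}}$; on $\{|y|>(1-\beta)L\}$, the factorization $\frac{|y|}{\nu t} = \frac{1}{|y|}\cdot \frac{y^2}{\nu t}$ with $\frac{1}{|y|}\leq \frac{1}{(1-\beta)L}$ gives the contribution $\tfrac{1}{(1-\beta)L}$, the residual $\frac{y^2}{\nu t}e^{-y^2/(2\nu t)}$ being uniformly bounded and thus absorbable into the cushion $e^{3L^2/(2\nu t)}$.

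The central subtlety is bookkeeping: one must orchestrate the two regime splittings so that the precise constants emerge with the stated values, in particular the constant $C = \sup_z|e^{-z^2/2}-1|/|z|$ rather than a cruder Gaussian maximum such as $1/\sqrt{e}$, and so that the exponential cushion $e^{3L^2/(2\nu t)}$ is exactly large enough to absorb all the residual factors coming from the large-$|y|$ regime. Apart from this careful matching of constants, the argument is entirely elementary.
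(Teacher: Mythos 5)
Your reduction via the fundamental theorem of calculus to a pointwise bound on $\partial_y G_\nu(t,y) = -\tfrac{y}{\nu t}G_\nu(t,y)$ is a genuinely different route from the paper's, and it runs into two concrete problems.

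First, the ``elementary expansion'' $(x-2\sgn(x)L)^2 - y^2 \le 3L^2$ is simply false: taking $x$ small (say $0<|x|<L$) and $y$ near $0$ with $|y|<|x|$ gives $(x-2\sgn(x)L)^2-y^2 \to 4L^2$, not $3L^2$. The correct universal bound coming from $|y-x|\le\beta L<L$ is $4L^2$. (Working through the paper's own argument one in fact arrives at the exponent $e^{4L^2/(2\nu t)}$ after combining $C_{t,L,\beta}=(\cdots)e^{L^2/(2\nu t)}$ with the identity $(G_\nu(t,x+L)+G_\nu(t,x-L))e^{L|x|/(\nu t)} = G_\nu(t,x)e^{-L^2/(2\nu t)}+G_\nu(t,x-2\sgn(x)L)e^{3L^2/(2\nu t)}$; the $3L^2$ and the $\sqrt{2\nu t}$ in the statement appear to be slips.) So your claim cannot be repaired to give $3L^2$, and you should not build on it.

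Second, and more fundamentally, the constant $C=\sup_z|e^{-z^2/2}-1|/|z|$ does not come from bounding the derivative of the Gaussian. The sup-of-derivative route produces the quantity $\sup_z|z|e^{-z^2/2}=e^{-1/2}\approx 0.607$, which is strictly larger than $C\approx 0.451$, so your substitution step ``produces'' $e^{-1/2}/\sqrt{\nu t}$, not $C/\sqrt{2\nu t}$. The two functions $z\mapsto|e^{-z^2/2}-1|/|z|$ (secant slope) and $z\mapsto|z|e^{-z^2/2}$ (derivative modulus) are not interchangeable. You acknowledge this gap, but ``orchestrating the splittings'' cannot close it: whatever partition of the $y$-range one chooses, an FTC bound $|G_\nu(t,x+h)-G_\nu(t,x)|\le |h|\sup_{|\eta|\le\beta L}|\partial_y G_\nu(t,x+\eta)|$ will inherit $e^{-1/2}$, because it is blind to the cancellation inside the integral $\int_0^h \partial_y G_\nu\,\mathrm{d}s$ near critical points of $G_\nu$.

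The paper's proof avoids both issues precisely because it is a Gronwall argument rather than an FTC sup bound. It introduces the normalized increment $I(t,x,h)=h^{-1}G_\nu^{-1}(t,x\mp L)\bigl[G_\nu(t,x+h)-G_\nu(t,x)\bigr]$, computes $\partial_x I = -\tfrac{1}{\nu t}e^{-\cdots}-\tfrac{L}{\nu t}I$, and integrates the resulting first-order inequality in $x$ via Bellman--Gronwall. The constant $C$ then enters through the initial datum $|I(t,0,h)|$, which is literally the secant ratio $|e^{-h^2/(2\nu t)}-1|/|h|$ times a factor, and the shifted-kernel normalization $G_\nu(t,x\mp L)$ is what produces the two translates $G_\nu(t,x\pm 2L)$ and the exponential cushion at the end. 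Your strategy is more elementary and, with larger constants ($e^{-1/2}$ in place of $C$ and exponent $4L^2$ in place of $3L^2$), might be made to work; but it does not reproduce the statement, and the pointwise claim $\frac{|y|}{\nu t}G_\nu(t,y)\le(\text{coeff})(\text{bracket})$ for all $y\in[x-\beta L,x+\beta L]$ is left unverified in your sketch and requires a careful case analysis you have not supplied.
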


\begin{proof}
Fix $L>0$ and $\beta\in \;]0,1[$. Assume that $|h|\le
\beta L$. Define
\begin{gather*}
f(t,x,h) = G_\nu(t,x+h) + G_\nu(t,x-h) -2  G_\nu(t,x),\\
I(t,x,h) =\begin{cases}
h^{-1} \:G_\nu^{-1}(t,x-L) \big[G_\nu(t,x+h)-G_\nu(t,x)\big]& \text{if $x\ge
0$,}
\cr
h^{-1} \:G_\nu^{-1}(t,x+L) \big[G_\nu(t,x+h)-G_\nu(t,x)\big]& \text{if $x\le
0$.}
\end{cases}
\end{gather*}
Clearly,
\begin{align}\label{E2_:GG2G}
\left|\frac{f(t,x,h)}{h\:\left(G_\nu(t,x+L)+G_\nu(t,x-L)\right)}\right| \le
\left|I(t,x,h)\right| +
\left|I(t,x,-h)\right|\:.
\end{align}
We will bound $\left|I(t,x,h)\right|$ for $-\beta L \le h \le
\beta L$.
If $x\ge 0$, then 
\[
I(t,x,h)=\frac{1}{h}(\:e^{-\frac{(x+h)^2}{2\nu
t}+\frac{(x-L)^2}{2\nu t}
}-
e^{-\frac{x^2}{2\nu t}+\frac{(x-L)^2}{2\nu t}}\:
),\]
and so
\[
\frac{\partial }{\partial x} I(t,x,h) =
-\frac{1}{\nu t}e^{-\frac{(x+h)^2}{2\nu t}+\frac{(x-L)^2}{2\nu t}
} - \frac{L}{\nu t} \:I(t,x,h).
\]
Hence,
\[\left|I(t,x,h)\right|
\le
\int_0^x (\nu t)^{-1} e^{-\frac{(y+h)^2}{2\nu t}+\frac{(y-L)^2}{2\nu t}}\ud y +
\frac{L}{\nu t} \int_0^x \left|I(t,y,h)\right|\ud
y+\left|I(t,0,h)\right|.
\]
Let $C$ be the constant defined in the proposition. Then
\[
\left|I(t,0,h)\right| \le \frac{C}{\sqrt{2\nu t}}
e^{\frac{L^2}{2\nu t}},
\quad \text{for all $h\in\R$.}
\]
Since $|h|\le \beta L$,
\begin{align*}
\int_0^x \frac{1}{\nu t}e^{-\frac{(y+h)^2}{2\nu t}+\frac{(y-L)^2}{2\nu
t}}\ud y   \le
\int_0^\infty \frac{1}{\nu t}e^{-\frac{(y+h)^2}{2\nu
t}+\frac{(y-L)^2}{2\nu t}}\ud y
= \frac{e^{\frac{L^2-h^2}{2\nu
t}}}{L+h}
\le \frac{e^{\frac{L^2}{2\nu t}}}{(1-\beta)L}.
\end{align*}
Therefore,
\[
\left|I(t,x,h)\right|
\le
C_{t, L,\beta}
+ \frac{L}{\nu t} \int_0^x
\left|I(t,y,h)\right| \ud y,\quad \text{with $
C_{t, L,\beta} :=\left(\frac{C}{\sqrt{2\nu t}}
+\frac{1}{(1-\beta)L}\right)e^{\frac{L^2}{2\nu t}}$.}
\]
Apply Bellman-Gronwall's lemma (see \cite[Lemma 12.2.2]{Kuo05Introduction}) to
get
\[
\left|I(t,x,h)\right|  \le
 C_{t, L,\beta}\:e^{\frac{L x}{\nu t}} =
C_{t, L,\beta} \: e^{\frac{L |x|}{\nu t}},
\]
and so, by definition of  $I(t,x,h)$,
\begin{align}\label{E2_:gg2g}
\left|G_\nu(t,x+h)-G_\nu(t,x)\right|
\le C_{t,L,\beta} |h| \left(G_\nu(t,x+L)+G_\nu(t,x-L)\right) e^{\frac{L
|x|}{\nu t}}.
\end{align}
By symmetry, for $x\le 0$, we get the same bound for $|I(t,x,h)|$.
Hence, from \eqref{E2_:GG2G},
\begin{align}\label{E2_:gg3g}
\left|f(t,x,h)\right| \le 2 C_{t, L,\beta}
|h| \left(G_{\nu}(t,x+L)+G_{\nu}(t,x-L)\right) \exp\left(\frac{L|x|}{\nu
t}\right).
\end{align}
Finally, some calculations show that
\begin{align*}
\big(G_{\nu}(t,x+L)+&G_{\nu}(t,x-L)\big) e^{\frac{L|x|}{\nu t}}\\
= & G_\nu(t,x) e^{-\frac{L^2}{2\nu t}} +
G_\nu(t,x-2L) e^{\frac{3L^2}{2\nu t}} \Indt{x\ge 0}
+
G_\nu(t,x+2L) e^{\frac{3L^2}{2\nu t}} \Indt{x\le 0}\\
\le&
 G_\nu(t,x) e^{-\frac{L^2}{2\nu t}} +
\bigg(G_\nu(t,x-2L)+G_\nu(t,x+2L)\bigg) e^{\frac{3L^2}{2\nu t}}\;.
\end{align*}
The desired conclusions now follow from \eqref{E2_:gg2g} and \eqref{E2_:gg3g}.
\end{proof}

\begin{proof}[Proof of Proposition \ref{P2:Holder-I} (1)]
Assume that $\Vip=0$. Set $\bar{z}=(z_1+z_2)/2$. Set
\[
I(t,x;t',x') = \iint_{[0,t]\times\R} \ud s\ud y
\left[J_0^*\left(s,y\right)\right]^2
\left(G_\nu\left(t-s,x-y\right)-G_\nu(t'-s,x'-y) \right)^2.
\]
Write $\left[J_0^*\left(s,y\right)\right]^2$ as a double integral
and then use Lemma \ref{LH:GG} to get
\begin{equation}\label{E2:Itxtx}
 \begin{aligned}
 I(t,x;t',x')=& \int_0^t\ud s \iint_{\R^2} |\mu|(\ud z_1)
|\mu|(\ud z_2) \: G_{2\nu}(s,z_1-z_2)\\
&\times \int_\R  \ud y\: G_{\nu/2}\left(s,y-\bar{z}\right)
\left(
G_\nu\left(t-s,x-y\right)-G_\nu(t'-s,x'-y)
\right)^2. 
 \end{aligned}
\end{equation}
In the following, we use $\int\ud y\: G(G-G)^2$ to denote the $\ud y$--integral
 in \eqref{E2:Itxtx}.
Expand $(G-G)^2 = G^2 -2 G G + G^2$ and apply Lemma \ref{LH:GG} to each term:
\begin{align*}
(
G_\nu&\left(t-s,x-y\right)-G_\nu(t'-s,x'-y)
)^2
\\
=&\frac{1}{\sqrt{4\pi\nu(t-s)}} G_{\nu/2}\left(t-s,x-y\right)
+\frac{1}{\sqrt{4\pi\nu(t'-s)}}
G_{\nu/2}\left(t'-s,x'-y\right)\\
&-
2G_{2\nu}\left(\frac{t+t'}{2}-s,x-x'\right) G_{\nu/2}\left(\frac{2(t-s)(t'-s)}{
t+t'-2s},y-\frac{(t-s)x'+(t'-s)x}{t+t'-2s} \right).
\end{align*}
Then integrate over $y$ using the semigroup
property of the heat kernel:
\begin{align}\notag
\int_\R \ud y& \: G_{\nu/2}(s,y-\bar{z}) 
\left(G_\nu(t-s,x-y)-G_\nu(t'-s,x'-y)\right)^2
 \\ \label{E2:GGD}
=\:&\frac{1}{\sqrt{4\pi\nu(t-s)}} 
G_{\nu/2}\left(t,x-\bar{z}\right)+\frac{1}{\sqrt{4\pi\nu(t'-s)}}
G_{\nu/2}\left(t',x'-\bar{z}\right)\\ \notag
&-2\:G_{2\nu}\left(\frac{t+t'}{2}-s,x-x'\right)G_{\nu/2}\left(
\frac{2(t-s)(t'-s)}{t+t'-2s}+s,
\frac{(t-s)x'+(t'-s)x}{t+t'-2s} -\bar{z}
\right).
\end{align}

{\vspace{1em}\noindent\bf Property \eqref{E2:H1}.~~} Set $x=x'$ in 
\eqref{E2:Itxtx} and let 
$h=t'-t$. 
Then $\frac{2(t-s)(t'-s)}{t+t'-2s}+s = t+\frac{(t-s)h}{2(t-s)+h}$
and \eqref{E2:GGD} becomes
\begin{align*}
\int\ud y\: G(G-G)^2
& =
\left[\frac{1}{\left(4\pi\nu 
(t-s)\right)^{\frac{1}{2}}}+\frac{1}{\left(4\pi\nu 
(t'-s)\right)^{\frac{1}{2}}}-
\frac{1}{\left(\pi 
\nu\left(\frac{t+t'}{2}-s\right)\right)^{\frac{1}{2}}}\right]
G_{\nu/2}\left(t,x-\bar{z}\right)\\
&\qquad
+ \frac{1}{\sqrt{4\pi\nu(t'-s)}}\left(
\frac{G_{\nu/2}\left(t',
x-\bar{z}\right)}{G_{\nu/2}\left(t,x-\bar{z}\right)} -1
\right)G_{\nu/2}\left(t,x-\bar{z}\right)\\
&\qquad
- \frac{1}{\sqrt{\pi \nu\left(\frac{t+t'}{2}-s\right) }}
\left(
\frac{G_{\nu/2}\left(t+\frac{(t-s)h}{2(t-s)+h},
x-\bar{z}\right)}{G_{\nu/2}\left(t,x-\bar{z}\right)} -1
\right)G_{\nu/2}\left(t,x-\bar{z}\right)\\
&:= I_1 + I_2 - I_3\;.
\end{align*}
We first consider $I_2$. Because $1/n\le t\le t'\le n$, 
we have that $h\in[0,n^2 t]$, so by Lemma \ref{L2:gtxr}, we find after 
simplification that
\begin{gather*}
\left|I_2\right| \le\frac{3\sqrt{1+n^2}}{4\sqrt{\pi\nu t(t'-s)}}
G_{\frac{\nu (1+n^2)}{2}}\left(t,x-\bar{z}\right)\sqrt{h},
\end{gather*}
and so
\begin{gather*}
\int_0^t \ud s\:G_{2\nu} (s,z_1-z_2) |I_2| 
\le \sqrt{h}\:
\int_0^t \ud s\:  \frac{3\sqrt{1+n^2}}{4\sqrt{\pi \nu t(t'-s)}}
G_{\nu(1+n^2)/2}\left(t,x-\bar{z}\right)
G_{2\nu}
(s,z_1-z_2) .
\end{gather*}
Apply Lemma \ref{LH:Split} to
$G_{\nu\left(1+n^2\right)/2}\left(\cdots\right) G_{2\nu}(\cdots)$ and
integrate over $\ud z_1\ud z_2$ to get
\[
\iint_{\R^2}|\mu|(\ud z_1)|\mu|(\ud z_2)\int_0^t \ud s \: G_{2\nu}|I_2|
\le
\frac{3\left(1+n^2\right) \sqrt{h}}{2\sqrt{\pi\nu}}\:
\left(|\mu|*G_{2\nu\left(1+n^2\right)}(t,\cdot)\right)^2(x)
 \int_0^t \frac{\ud s}{\sqrt{s(t'-s)}}.
\]
By the Beta integral (see \eqref{E2:BetaInt}), 
the $\ud
s$--integral is less than or equal to $\pi$. So
\begin{gather}
\iint_{\R^2}|\mu|(\ud z_1)|\mu|(\ud z_2)\int_0^t\ud s \: G_{2\nu}(\cdots)
|I_2| 
\le
\frac{3 \left(1+n^2\right) \sqrt{\pi}}{2\sqrt{\nu}}\:
\left(|\mu|*G_{2\nu\left(1+n^2\right)}(t,\cdot)\right)^2
(x) \sqrt{h}.
\label{E2:HolderT-I0}
\end{gather}

As for $I_3$, notice that since $s\in [0,t]$, $\frac{(t-s)h}{2(t-s)+h} \le
\frac{t h}{h} \le n^2
t$ for all $h\ge 0$.
Apply Lemma \ref{L2:gtxr} with  $r=
\frac{(t-s)h}{2(t-s)+h}$ to obtain that
\[\left|
\frac{G_{\nu/2}\left(t+\frac{(t-s)h}{2(t-s)+h},
x-\bar{z}\right)}{G_{\nu/2}\left(t,x-\bar{z}\right)} -1
\right|
\le
\frac{3}{2\sqrt{2}}\exp\left(\frac{n^2(x-\bar{z})^2}{\nu
t\left(1+n^2\right)}\right)
 \frac{\sqrt{h}}{\sqrt{t}},\quad\text{for all $h\ge 0$,}
\]
where we have used the inequality $\frac{(t-s)h}{2(t-s) +h}
\le \frac{h}{2}$.
Multiplying out the exponentials, we obtain
\[\left|I_3\right|
\le \frac{3\sqrt{1+n^2}}{2\sqrt{2\pi\nu t(t-s)}}
G_{\nu (1+n^2)/2}\left(t,x-\bar{z}\right)\:\sqrt{h}.
\]
Then by the same arguments as for $I_2$, we have that
\begin{gather*}
\iint_{\R^2}|\mu|(\ud z_1)|\mu|(\ud z_2)\int_0^t\ud s\: G_{2\nu}
 |I_3|  =\frac{3 \left(1+n^2\right) \sqrt{\pi}}{\sqrt{2\nu}}\:
\left(|\mu|*G_{2\nu\left(1+n^2\right)}(t,\cdot)\right)^2
(x)\sqrt{h}.
\end{gather*}

Now let us  consider $I_1$. Apply Lemma \ref{LH:Split} to
$G_{2\nu}\left(s,z_1-z_2\right) G_{\nu/2}\left(t,x-\bar{z}\right)$ to get
\begin{align*}
\int_0^t \ud s\: G_{2\nu}(s,z_1-z_2) |I_1|\le&
\frac{\sqrt{t}}{\sqrt{\pi\nu}}
 G_{2\nu}(t,x-z_1)
 G_{2\nu}(t,x-z_2)
\\ &\times \int_0^t
\ud s
\left|\left(s(t-s)\right)^{-\frac{1}{2}}+\left(s(t'-s)\right)^{-\frac{1}{2}}
- 2\left(s\left[\frac{t+t'}{2}-s\right]\right)^{-\frac{1}{2}}\right|.
\end{align*}
The integrand is bounded by
\[
\left|\left(s(t-s)\right)^{-\frac{1}{2}}-
\left(s\left[\frac{t+t'}{2}-s\right]\right)^{-\frac{1}{2}}\right|
+
\left|\left(s(t'-s)\right)^{-\frac{1}{2}}-
\left(s\left[\frac{t+t'}{2}-s\right]\right)^{-\frac{1}{2}}\right|.
\]
Taking into account the signs of the increment, this is equal to
$\left[s(t-s)\right]^{-1/2}-\left[s(t'-s)\right]^{-1/2}$.
Integrate the r.h.s.  of the above inequality using the formula
$\int_0^t \frac{\ud s}{\sqrt{s(t'-s)}}= 2
\arctan\left(\frac{\sqrt{t}}{\sqrt{t'-t}}\right)$ for all $t'>t\ge
0$ to find that
\[
\int_0^t\ud s 
\left|\left(s(t-s)\right)^{-\frac{1}{2}}+\left(s(t'-s)\right)^{-\frac{1}{2}}-
2\left(s\left[\frac{t+t'}{2}-s\right]\right)^{-\frac{1}{2}}\right| \le
\pi - 2\arctan\left(\sqrt{t/h}\right).
\]
It is an elementary calculus exercise to show that the function
$f(x):= x \left(\pi -2 \arctan \left(x\right)\right)$ for $x\ge
0$
is non-negative and bounded from above, and $f(x)\le
\lim_{x\rightarrow+\infty} f(x)= 2$.
Hence, $\pi -2\arctan\left(\sqrt{t/h}\right)
\le 2\sqrt{h/t}$.
Therefore,
\begin{align}\label{E2:HolderT-I2}
\iint_{\R^2}|\mu|(\ud z_1)|\mu|(\ud z_2)\int_0^t\ud s\:
 G_{2\nu} (s,z_1-z_2) |I_1| 
\le
\frac{2\sqrt{h}}{\sqrt{\pi \nu}}
\left(|\mu|*G_{2\nu}(t,\cdot)\right)^2 (x).
\end{align}

We conclude from \eqref{E2:HolderT-I0}--\eqref{E2:HolderT-I2} that for all
$(t,x), (t',x)\in [1/n,n]\times[-n,n]$
with $t'>t$,
\[
I(t,x;t',x)
\le \left(
C^{\star}_{\nu}\left(|\mu|*G_{2\nu}(t,\cdot)\right)^2 (x)
+C^*_{n,\nu} \left(|\mu|*G_{2\nu\left(1+n^2\right)}(t,\cdot)\right)^2 (x)
\right) \:\sqrt{h},
\]
where \[
C^{\star}_{\nu} = \frac{2}{\sqrt{\pi\nu}},\quad\text{and}\quad
C^*_{n,\nu}
:=\frac{3\sqrt{\pi}\left(1+\sqrt{2}\right)\left(1+n^2\right)}{2\sqrt{\nu}}.\]
As for the contribution of the constant $\Vip$, it corresponds to the initial
data $\mu(\ud x) \equiv \Vip\ud x$ and we apply  
Proposition \ref{PH:G}.
Finally, by the smoothing effect of the heat kernel (Lemma
\ref{LH:J0Cont}),
we can choose the following constant
\begin{align*}
C_{n,1} =
\Vip^2 \frac{\sqrt{2}-1}{\sqrt{\pi \nu}}
+
\sup
2  \left(
C^\star_{\nu}\left(|\mu|*G_{2\nu}(s,\cdot)\right)^2 (y)
+C^*_{n,\nu} \left(|\mu|*G_{2\nu\left(1+n^2\right)}(s,\cdot)\right)^2 (y)
\right),
\end{align*}
for \eqref{E2:H1}, where the supremum is over $(s,y)\in
[1/n,n]\times[-n,n]$. This proves \eqref{E2:H1}.

{\vspace{1em}\noindent\bf Property \eqref{E2:H3}.~~}
Set $t=t'$ in \eqref{E2:Itxtx} and $\bar{x}=\frac{x+x'}{2}$. Consider
the integral in \eqref{E2:Itxtx}
\[\int_0^t \ud s\:G_{2\nu}(s,z_1-z_2) \int\ud y\: G(G-G)^2,
\]
which is denoted by $\int\ud s\:G\int \ud y\: G(G-G)^2$ for
convenience.
By \eqref{E2:GGD}, 
\begin{equation}\label{EH:GGG}
 \begin{aligned}
\int \ud y\:& G_{\nu/2}(s,y-\bar{z})
\left(G_\nu(t-s,x-y)-G_\nu(t-s,x'-y)\right)^2
\\
 & \qquad =
 \frac{1}{\sqrt{4\pi\nu(t-s)}}
\left[G_{\nu/2}\left(t,x-\bar{z}\right)+G_{\nu/2}\left(t,x'-\bar{z}
\right)\right]\\
 & \qquad\qquad\qquad - 2\:G_{2\nu}\left(t-s,x-x'\right)
G_{\nu/2}\left(t,\bar{x}-\bar{z}\right).  
 \end{aligned}
\end{equation}
Then apply Lemma \ref{LH:IntGG} to integrate over $s$:
\begin{align*}
\int \ud s\: G\int \ud y\: G(G-G)^2 =&
\frac{1}{4\nu}\left(G_{\nu/2}\left(t,x-\bar{z}\right)+G_{\nu/2}\left(t,x'-\bar{z
} \right)\right)\Erfc\left(\frac{|z_1-z_2|}{\sqrt{4\nu t}}\right)\\
 &-\frac{1}{2\nu}
G_{\nu/2}\left(t,\bar{x}-\bar{z}\right)
\Erfc\left(
\frac{1}{\sqrt{2t}}
\left[\frac{|z_1-z_2|}{\sqrt{2\nu}}+\frac{\left|x-x'\right|}{\sqrt{2\nu}}\right]
\right)\;.
\end{align*}
It follows from the definition of $\Erfc(x)$ that $\Erfc\left(|x|+h\right)\ge
\Erfc\left(|x|\right) -\frac{2e^{-x^2}}{\sqrt{\pi}} h$ for $h\ge 0$ and we 
apply this inequality to the last factor to obtain,
\begin{align*}
\int \ud s \:G\int\ud y\: &G(G-G)^2 \\
\le &
\: \frac{1}{\nu}\:
G_{\nu/2}\left(t,\bar{x}-\bar{z}\right)
\frac{\left|x-x'\right|}{\sqrt{4\pi\nu t}} \exp\left(-\frac{(z_1-z_2)^2}{4\nu
t}\right)
\\
&+\frac{1}{4\nu}\Bigg[
G_{\nu/2}\left(t,x-\bar{z}\right)
+G_{\nu/2}\left(t,x'-\bar{z}\right)
-2
G_{\nu/2}\left(t,\bar{x}-\bar{z}\right)
\Bigg]
\Erfc\left(\frac{|z_1-z_2|}
{\sqrt{4\nu t}}\right). 
\end{align*}
Now apply Lemma \ref{LH:GG2G} with $h=\frac{x'-x}{2}$, $L=2n$ and
$\beta=1/2$: there are two constants
\begin{align*}
C_n'&=\sup_{s\in [1/n,n]} C_{2n,1/2,\nu s} = \frac{C \sqrt{n}}{\sqrt{2\nu}} +
\frac{1}{n},\: \quad C\approx 0.451256\;,\\
C_n'' &= \sup_{s\in [1/n,n]} C_{2n, 1/2, \nu s}'' = C_n'
\exp\left(\frac{6\:n^3}{\nu}\right)\;,
\end{align*}
where $C_{L,\beta,\nu s}'$ and $C_{L,\beta,\nu s}''$ are defined in
Lemma \ref{LH:GG2G},
such that for $\left|\frac{x-x'}{2}\right|\le\beta L = n$,
\begin{multline*}
\Bigg|G_{\nu/2}\left(t,x-\bar{z}\right)
+G_{\nu/2}\left(t,x'-\bar{z}\right)
-2
G_{\nu/2}\left(t,\bar{x}-\bar{z}\right)\Bigg| \\ 
\le \left\{
C_n''\:
\left[
G_{\nu/2}\left(t,\bar{x}-\bar{z}-2L\right)
+
G_{\nu/2}\left(t,\bar{x}-\bar{z}+2L\right)
\right]
+C_n'\:
G_{\nu/2}\left(t,\bar{x}-\bar{z}\right)
\right\}\left|x-x'\right|.
\end{multline*}
Note that $t\ge 1/n$ is essential for this inequality to be valid.
By Lemma \ref{LH:IntGG}, we have that $\Erfc\left(\frac{|z_1-z_2|}
{\sqrt{4\nu t}}\right) \le \sqrt{4\pi \nu t} \;
G_{2\nu}\left(t,z_1-z_2\right)$,
and so
\begin{align*}
\left|\int\ud s \: G\int\ud y\: G(G-G)^2\right|  \le
&
\left(\frac{1}{\nu} + \frac{\sqrt{\pi t}}{\sqrt{4\nu}}
C_n'\right)\left|x-x'\right|\:
G_{\nu/2}\left(t,\bar{x}-\bar{z}\right)
G_{2\nu}\left(t,z_1-z_2\right) \\
&+
\frac{\sqrt{\pi t} \; C_n'' }{\sqrt{4 \nu}}\left|x-x'\right|\:
G_{\nu/2}\left(t,\bar{x}-\bar{z}-2L\right)
G_{2\nu}\left(t,z_1-z_2\right)\\
&+
\frac{\sqrt{\pi t} \; C_n'' }{\sqrt{4 \nu}}\left|x-x'\right| \:
G_{\nu/2}\left(t,\bar{x}-\bar{z}+2L\right)
G_{2\nu}\left(t,z_1-z_2\right)\:.
\end{align*}
Now apply Lemma \ref{LH:Split}:
\begin{align*}
\left|\int \ud s\:G\int\ud y\: G(G-G)^2\right|  \le
&
\left(\frac{1}{\nu} + \frac{\sqrt{\pi n}}{\sqrt{4\nu}}
C_n'\right)\left|x-x'\right|
G_{2\nu}\left(t,\widetilde{x}_1-z_1\right)
G_{2\nu}\left(t,\widetilde{x}_1-z_2\right)\\
&+
\frac{\sqrt{\pi n} \; C_n'' }{\sqrt{4 \nu}}\left|x-x'\right|\:
G_{2\nu}\left(t,\widetilde{x}_2-z_1\right)
G_{2\nu}\left(t,\widetilde{x}_2-z_2\right)\\
&+
\frac{\sqrt{\pi n} \; C_n'' }{\sqrt{4 \nu}}\left|x-x'\right|\:
G_{2\nu}\left(t,\widetilde{x}_3-z_1\right)
G_{2\nu}\left(t,\widetilde{x}_3-z_2\right),
\end{align*}
where $\widetilde{x}_1 = \bar{x}$,  $\widetilde{x}_2 = \bar{x}-2L$
and $\widetilde{x}_3 = \bar{x}+2L$. Clearly, $\widetilde{x}_i\in
[-5n,5n]$ for all $i=1,2,3$.
Finally, after integrating over $|\mu|(\ud z_1)$ and $|\mu|(\ud z_2)$, we see
that
\[I(t,x;t,x') \le C_{n,3}' \left|x-x'\right|\] for all $t\in [1/n,n]$, and $x,
x'\in [-n,n]$, where the constant  is equal to
\[
C_{n,3}' =\left(\frac{1}{\nu} + \frac{\sqrt{\pi n}}{\sqrt{4\nu}}\left(
C_n'+2 C_n''\right)\right) \sup_{(s,y)\in[1/n,n]\times[-5n,5n]}
 \left(|\mu|*G_{2\nu}(s,\cdot)\right)^2\left(y\right).
\]
As for the contribution of the constant $\Vip$, it corresponds to the initial
data $|\mu|(\ud x) \equiv \Vip \ud x$ and we apply 
Proposition \ref{PH:G}.
Finally, one can choose, for \eqref{E2:H3}, 
\begin{align*}
C_{n,3} =
\frac{\Vip^2}{\nu} 
+
\left(\frac{2}{\nu} + \frac{\sqrt{\pi n}}{\sqrt{\nu}}\left(
C_n'+2 C_n''\right)\right) \sup_{(s,y)\in[1/n,n]\times [-5n,5n]}
  \left(|\mu|*G_{2\nu}(s,\cdot)\right)^2\left(y\right).
\end{align*}
This constant $C_{n,3}$ is clearly finite. This
completes the proof of \eqref{E2:H3}.

{\vspace{1em}\noindent\bf Property \eqref{E2:H5}.~~}
We first consider the contribution of $J_0^*(t,x)$. As before, let
\[I\left(t,x;t',x'\right)
=\iint_{[t,t']\times\R} \ud s\ud y\: \left[J_0^*\left(s,y\right)\right]^2
G_\nu^2(t'-s,x'-y).
\]
Set $\bar{z}=(z_1+z_2)/2$.
Similar to the arguments leading to \eqref{E2:Itxtx}, we have
\begin{equation}
 \label{E2:Itxtx5}
\begin{aligned}
 I\left(t,x;t',x'\right)
=&\int_t^{t'} \ud s \iint_{\R^2}
|\mu|(\ud z_1)|\mu|(\ud z_2)\:
G_{2\nu}(s,z_1-z_2)
\\
&\times \int_\R\ud y\: G_{\nu/2}\left(s,y-\bar{z}\right)
G_\nu^2\left(t'-s,x'-y\right).
\end{aligned}
\end{equation}
Apply Lemma \ref{LH:GG} to $G_\nu^2\left(t'-s,x'-y\right)$ and then
integrate over $y$,
\[
I\left(t,x;t',x'\right)=
\int_t^{t'}\ud s\iint_{\R^2}
|\mu|(\ud z_1)|\mu|(\ud z_2) \frac{1}{\sqrt{4\pi \nu (t'-s)}}
G_{2\nu}(s,z_1-z_2)
G_{\nu/2}\left(t',x'-\bar{z}\right).
\]
Now apply Lemma \ref{LH:Split} to $G_{2\nu}(s,z_1-z_2)
G_{\nu/2}\left(t',x'-\bar{z}\right)$.  Then by Lemma \ref{L2:arcsin}
and the fact that
$\arcsin(x) \le \pi x /2$ for $x\in [0,1]$,
we see that
\[I\left(t,x;t',x'\right)  \le
\left|J_0^*\left(2t',x'\right)
\right|^2 \frac{2 \sqrt{t'}}{\sqrt{\pi
\nu }}\arcsin\left(\sqrt{\frac{t'-t}{t'}}\right)
\le
\left|J_0^*\left(2t',x'\right)
\right|^2 \sqrt{\frac{\pi}{\nu}} \sqrt{t'-t}.\]
Therefore,
\[
I\left(t,x;t',x'\right) \le C_{n,5}' \sqrt{t'-t},
\quad\text{with $C_{n,5}' =  \sqrt{\frac{\pi}{\nu}}
\sup_{(s,y)\in [1/n,n]\times[-n,n]}
\left|J_0^*(2s,y)\right|^2$.}
\]
As for the contribution of $\Vip$, it corresponds to the initial data
$|\mu|(\ud x)\equiv \Vip\ud x$ and we apply Proposition \ref{PH:G}. Finally, we 
can choose
\begin{align}\label{E2:Cn5}
C_{n,5} = \frac{\Vip^2 }{\sqrt{\pi \nu}}+
2\sqrt{\frac{\pi}{\nu}} \sup_{(s,y)\in [1/n,n]\times[-n,n]} 
\left|J_0^*(2s,y)\right|^2
\end{align}
for \eqref{E2:H5}. This completes the proof of \eqref{E2:H5} and therefore part 
(1) of Proposition \ref{P2:Holder-I}.
\end{proof}

\vspace{1em}
\begin{proof}[Proof of Proposition \ref{P2:Holder-II} (1)]
We first prove \eqref{E2:H2} and \eqref{E2:H4}. Denote
\[
I(t,x;t',x')= \iint_{[0,t]\times\R}\ud s\ud y
\left(\left|J_0^*\right|^2\star
G_\nu^2\right)\left(s,y\right)
\left(
G_\nu\left(t-s,x-y\right)-G_\nu(t'-s,x'-y)
\right)^2.
\]
Let $\bar{z}=(z_1+z_2)/2$. As in \eqref{E2:Itxtx}, replace
$\left|J_0^*(u,z)\right|^2$ by the double integral.
By Lemma \ref{LH:GG},
\begin{align*}
I\left(t,x;t',x'\right)=& \iint_{\R^2}|\mu|(\ud z_1)|\mu|(\ud z_2)
\int_0^t  \ud s \int_0^s  \ud u \: \frac{1}{\sqrt{4\nu\pi (s-u)}}
G_{2\nu}(u,z_1-z_2) \\
&\times \iint_{\R^2}\ud y \ud z\:
G_{\nu/2}\left(u,z-\bar{z}\right)  G_{\nu/2}(s-u,y-z)\\
&\times\left(G_\nu\left(t-s,x-y\right)-G_\nu\left(t'-s,x'-y\right)\right)^2.
\end{align*}
We first integrate over $\ud z$ using the semigroup property and then
integrate over $\ud u$ by using Lemma \ref{LH:IntGG} and use the fact that
$s\le t\le n$ to obtain
\begin{equation}\label{E2:Itxtx2}
\begin{aligned}
 I\left(t,x;t',x'\right)\le &\frac{\sqrt{\pi n}}{\sqrt{4\nu}}
\int_0^t  \ud s
\iint_{\R^2}|\mu|(\ud z_1)|\mu|(\ud z_2)\: G_{2\nu}(s,z_1-z_2) \\
&\times \int_\R\ud y\:
G_{\nu/2}\left(s,y-\bar{z}\right)
\left(G_\nu\left(t-s,x-y\right)-G_\nu\left(t'-s,x'-y\right)\right)^2 \;.
\end{aligned}
\end{equation}
Comparing this upper bound with \eqref{E2:Itxtx}, we can apply Proposition
\ref{P2:Holder-I} to conclude that \eqref{E2:H2} and \eqref{E2:H4} are
true with the constants $C_{n,2}$ and $C_{n,4}$ given in \eqref{E2:Cn246}.
As for \eqref{E2:H6}, let
\[
I\left(t,x;t',x'\right) =
\iint_{[t,t']\times\R} \ud s \ud y \left(\left|J_0^*\right|^2\star
G_\nu^2 \right) \left(s,y\right)
G_\nu^2(t'-s,x'-y).
\]
By arguments similar to those leading to \eqref{E2:Itxtx2}, we have that
\begin{align*}
I\left(t,x;t',x'\right)\le &\frac{\sqrt{\pi n}}{4\nu}
\int_t^{t'} \ud s
\iint_{\R^2}  |\mu|(\ud z_1)|\mu|(\ud z_2)G_{2\nu}(s,z_1-z_2) \\
& \quad\qquad \times \int_\R\ud y\: 
G_{\nu/2}\left(s,y-\bar{z}\right) G_{\nu}^2(t'-s,x'-y).
\end{align*}
Comparing this upper bound with \eqref{E2:Itxtx5}, we can apply Proposition
\ref{P2:Holder-I} to conclude that \eqref{E2:H6} is true with the
corresponding constant $C_{n,6}$ given in \eqref{E2:Cn246}.
This completes the proof of part (1) of Proposition \ref{P2:Holder-II}.
\end{proof}

\subsection{Proofs of part (2) of the Propositions \ref{P2:Holder-I} and 
\ref{P2:Holder-II}} \label{sec4.4}

\begin{lemma}\label{LH:linExp}
For $a\ge 1$ and $b\ge (a\:e)^{-1}$, we have that $|x|\le e^{b|x|^a}$ for all 
$x\in\R$.
\end{lemma}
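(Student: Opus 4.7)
The plan is to reduce the inequality to a one-variable calculus problem. Setting $y = |x| \geq 0$, the claim becomes $y \leq e^{by^a}$ for all $y \geq 0$. The case $y = 0$ is trivial, and for $y > 0$ we may take logarithms, so it suffices to show that
\[
  f(y) := by^a - \ln y \geq 0 \quad \text{for all } y > 0.
\]

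Next, I would minimize $f$ over $y > 0$. Differentiating gives $f'(y) = aby^{a-1} - 1/y = (aby^a - 1)/y$, which vanishes at the unique critical point $y_* = (ab)^{-1/a}$, and $f''(y_*) > 0$ since $a \geq 1$ and $b > 0$, so $y_*$ is a global minimum on $]0,\infty[$. A direct computation yields
\[
  f(y_*) = b \cdot (ab)^{-1} - \ln\left((ab)^{-1/a}\right) = \frac{1}{a} + \frac{1}{a}\ln(ab) = \frac{1 + \ln(ab)}{a}.
\]

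Finally, the hypothesis $b \geq (ae)^{-1}$ is exactly equivalent to $ab \geq e^{-1}$, i.e., $\ln(ab) \geq -1$, so $f(y_*) \geq 0$ and hence $f(y) \geq 0$ for all $y > 0$. Exponentiating gives $y \leq e^{by^a}$, which is the desired inequality. There is no real obstacle here; the only thing to verify is that the critical point computation and the threshold $(ae)^{-1}$ line up, which they do by design.
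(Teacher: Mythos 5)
Your proof is correct and takes essentially the same approach as the paper: both reduce to a one-variable calculus comparison between $b\,y^a$ and $\log y$ for $y>0$, with the critical threshold $b=(ae)^{-1}$ emerging from the same stationarity condition $ab\,y_*^a=1$. The only difference is one of presentation: the paper identifies the critical value of $b$ at which the two graphs are tangent and then asserts dominance for larger $b$, whereas you directly minimize $f(y)=by^a-\ln y$, verify convexity via $f''>0$, and check $f(y_*)\ge 0$ — a slightly more self-contained version of the same computation.
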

\begin{proof}
The case where $x=0$ is clearly true. We only need to consider the case where 
$x>0$. 
Equivalently, we need to solve the critical case where the graphs of the two 
functions $\log x$ and $b \: x^a$ intersect exactly once ($x>0$), that is,  
\[
\log x = b \: x^a,\quad\text{and}\quad \frac{1}{x} = a\: b\: x^{a-1},
\]
which implies $x=e^{1/a}$ and $b=(a\:e)^{-1}$. When $b$ is bigger than 
this critical value, 
the function $b|x|^a$ will dominate $\log x$ for all $x>0$.
\end{proof}

\begin{lemma}\label{LH:Delta-g}
Let $g(x)=e^{c |x|^a}$ with $c>0$ and $a>1$. For all $n>0$, the following 
properties hold:\\
(1) For all $x, z\in\R$, $0\le t\le t'\le n$, 
\[
\left|
g\left(x-\sqrt{t} \:z\right)
-g\left(x-\sqrt{t'} \:z\right)
\right|
\le 
a\:c\:\exp\left(c_1 |x|^a + c_2 |z|^a\right) \left|t'-t\right|^{1/2},
\]
where the two constants $c_1:=c_1(a,c)$ and $c_2:=c_2(n,a,c)$ can be chosen as 
follows:
\[
c_1(a,c)=\left(c+\frac{a-1}{a\: e}\right) 2^{a-1},\quad\text{and}\quad 
c_2(n,a,c)= c_1(a,c)\: n^{a/2} + \frac{1}{a\: e}.
\]\\
(2) For all $x,x'\in [-n,n]$, $z\in\R$ and $t\in [0,n]$,
\[
\left| 
g\left(x-\sqrt{t}\: z\right)
-g\left(x'-\sqrt{t}\: z\right)
\right|
\le c_3 \:\exp\left(c_4 |z|^a\right)\: \left|x'-x\right|
\]
where the two constants $c_3:=c_3(n,a,c)$ and $c_4:=c_4(n,a,c)$ can be chosen 
as 
follows:
\[
c_3(n,a,c): = a\: c\: e^{c_1\: n^a},\quad\text{and}\quad
c_4(n,a,c)= c_1\: n^{a/2}.
\]
\end{lemma}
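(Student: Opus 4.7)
The plan is to prove both parts by a one–variable mean value theorem argument applied to $g$, combining three standard ingredients: a derivative bound for $g$, the subadditivity $|\sqrt{t'}-\sqrt{t}|\le \sqrt{t'-t}$ for part (1), and Lemma~\ref{LH:linExp} to absorb polynomial factors into the exponential.

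First, observe that for $a>1$, $g(y)=e^{c|y|^a}$ is differentiable on $\R$ with $|g'(y)|\le a\,c\,|y|^{a-1} e^{c|y|^a}$. For part (1), I would fix $x,z\in\R$ and $0\le t\le t'\le n$, and apply the mean value theorem to $\tau\mapsto g(x-\tau z)$ on the interval $[\sqrt{t},\sqrt{t'}]$, obtaining
\[
\bigl|g(x-\sqrt{t}\,z)-g(x-\sqrt{t'}\,z)\bigr| \;\le\; a\,c\,|\xi|^{a-1} e^{c|\xi|^a}\,|z|\,\bigl|\sqrt{t'}-\sqrt{t}\bigr| \;\le\; a\,c\,|\xi|^{a-1} e^{c|\xi|^a}\,|z|\,\sqrt{t'-t},
\]
for some $\xi$ lying between $x-\sqrt{t}\,z$ and $x-\sqrt{t'}\,z$. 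Since $|\xi|\le |x|+\sqrt{n}\,|z|$ and $a\ge 1$, the convexity inequality $(u+v)^a\le 2^{a-1}(u^a+v^a)$ yields $|\xi|^a\le 2^{a-1}\bigl(|x|^a+n^{a/2}|z|^a\bigr)$.

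Next, to absorb the polynomial factors $|\xi|^{a-1}$ and $|z|$ into the exponential, I would invoke Lemma~\ref{LH:linExp}. A direct logarithmic comparison as in that lemma shows that for any $\gamma \ge (a-1)/(ae)$ one has $|\xi|^{a-1}\le e^{\gamma|\xi|^a}$, and in particular $|\xi|^{a-1}\le e^{\frac{a-1}{ae}|\xi|^a}$; taking $b=1/(ae)$ in Lemma~\ref{LH:linExp} also gives $|z|\le e^{|z|^a/(ae)}$. Combining these estimates,
\[
|\xi|^{a-1} e^{c|\xi|^a}\,|z| \;\le\; \exp\!\left(\left(c+\tfrac{a-1}{ae}\right)|\xi|^a + \tfrac{1}{ae}|z|^a\right),
\]
and inserting the bound on $|\xi|^a$ from the previous paragraph yields the stated inequality with $c_1=\bigl(c+\tfrac{a-1}{ae}\bigr)2^{a-1}$ and $c_2=c_1\,n^{a/2}+\tfrac{1}{ae}$.

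For part (2), the argument is parallel but simpler: applying the mean value theorem to $u\mapsto g(u-\sqrt{t}\,z)$ on the interval with endpoints $x,x'$ gives $|g(x-\sqrt{t}z)-g(x'-\sqrt{t}z)|\le a\,c\,|\eta|^{a-1} e^{c|\eta|^a}\,|x'-x|$ for some $\eta$ between the two arguments. Using $|x|,|x'|\le n$ and $t\le n$, we get $|\eta|\le n+\sqrt{n}\,|z|$, hence $|\eta|^a\le 2^{a-1}(n^a+n^{a/2}|z|^a)$. The same polynomial–absorption trick applied to $|\eta|^{a-1}$ (and no $|z|$ factor needs to be handled this time, since there is no $|z|$ outside the exponential) produces the stated inequality with $c_3=a\,c\,e^{c_1 n^a}$ and $c_4=c_1 n^{a/2}$. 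There is no serious obstacle; the only care needed is the bookkeeping of constants so that the exponents align with those in the statement, and checking that the choice $\gamma=(a-1)/(ae)$ indeed satisfies the hypothesis $b\ge (ae)^{-1}$ of Lemma~\ref{LH:linExp} (which it does, since $a\ge 1$ gives $(a-1)/(ae)\cdot\tfrac{1}{1} \ge 0$ and one simply takes the maximum with $1/(ae)$ if $a=1$ were allowed; here $a>1$ so this is automatic when $a\ge 2$, and the constant may be replaced by $1/(ae)$ in the range $1<a<2$ without affecting the conclusion).
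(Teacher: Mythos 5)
Your argument is correct and follows essentially the same route as the paper: both bound $|g'|$ along the segment joining the two arguments (via convexity of $g$ in the paper, via the mean value theorem here), then apply $(u+v)^a\le 2^{a-1}(u^a+v^a)$, $|\sqrt{t'}-\sqrt{t}|\le\sqrt{t'-t}$, and Lemma~\ref{LH:linExp} to absorb the polynomial factors into the exponential, yielding the same constants $c_1,\dots,c_4$. Your closing worry about whether $\gamma=(a-1)/(ae)$ meets the hypothesis $b\ge(ae)^{-1}$ is a red herring---the bound $|\xi|^{a-1}\le e^{(a-1)|\xi|^a/(ae)}$ holds for every $a>1$ simply by applying Lemma~\ref{LH:linExp} with $b=1/(ae)$ and raising both sides to the power $a-1$, which is exactly what the paper does.
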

\begin{proof}
(1) Because $a>1$, the function $g$ belongs to $C^1(\R)$, is convex and 
$g'(x)\ge 0$ for $x\ge 0$.
Hence, 
\[
\left|
g\left(x-\sqrt{t} \:z\right)
-g\left(x-\sqrt{t'} \:z\right)
\right|
\le \left|g'\left(|x|+\sqrt{n}\: |z|\right)\right|\cdot \left|\sqrt{t'}\: 
z-\sqrt{t}\: z\right|.
\]
Let $b=(a\: e)^{-1}$.
By Lemma \ref{LH:linExp}, 
$|g'(x)|=a\:c\:|x|^{a-1} e^{c |x|^a}\le a\:c\: e^{(c+(a-1)b)|x|^a}$. Thus
\begin{align}\label{EH_:Dg}
 \left|g'\left(|x|+\sqrt{n}\: |z|\right)\right|
\le a\:c\:e^{(c+(a-1)b)\left(|x|+\sqrt{n}\: |z|\right)^a}
\le 
a\:c\:e^{c_1 |x|^a + c_1 n^{a/2} |z|^a},
\end{align}
where we have applied the inequality $(x+y)^a\le 2^{a-1} (x^a+y^a)$ for all 
$x,y\ge 0$.
Clearly, 
\begin{align}
\label{EH:Delta-t}
|\sqrt{t'}-\sqrt{t}|\le \sqrt{t'-t}.
\end{align}
Finally, apply Lemma \ref{LH:linExp} to $|z|$, and combining all the above 
bounds proves (1).

(2) Similarly to (1), 
\[
\left|
g\left(x-\sqrt{t} \:z\right)
-g\left(x'-\sqrt{t} \:z\right)
\right|
\le \left|g'\left(|n|+\sqrt{n}\: |z|\right)\right|\cdot \left|x-x'\right|,
\]
and by \eqref{EH_:Dg}, $\left|g'\left(|n|+\sqrt{n}\: |z|\right)\right|
\le 
a\: c\: e^{c_1 n^a + c_1 n^{a/2} |z|^a}$. This proves (2).
\end{proof}

For $c>0$ and $a\in [0,2[\:$, define the constant
\[
K_{a,c}(\nu t):=\left(e^{c|\cdot|^a}* G_{\nu}(t,\cdot)\right)(0)\:.
\]
For $0\le t\le n$, we have that
\begin{align}\label{EH:Kact}
 K_{a,c}(\nu t)=\int_\R \ud y\: e^{c \left(\sqrt{t} \:|y|\right)^a} G_{\nu}(1,y)
\le 
\int_\R \ud y \: e^{c \left(\sqrt{n} \:|y|\right)^a} G_{\nu}(1,y)
= K_{a,c}(\nu n).
\end{align}

\vspace{1em}
\begin{proof}[Proof of Proposition \ref{P2:Holder-I} (2)]
Because $\mu\in\calM_H^*(\R)$, there are a function $f(x)$ and two constants 
$a\in [1,2[\:$ and $c>0$ such 
that $\mu(\ud x)=f(x)\ud x$ and $c=\sup_{x\in\R} |f(x)|e^{-|x|^a}<+\infty$.
In the following, we assume that $x,x'\in [-n,n]$, and $t,t'\in [0,n]$.
Set $g(x)=e^{2^a |x|^a}$ and assume that $\Vip=0$. 
From \eqref{E2:Itxtx}, 
\begin{align*}
 I(t,x;t',x')\le& c^2\int_0^t\ud s \iint_{\R^2} \ud z_1 \ud z_2 \:e^{|z_1|^a + 
|z_2|^a} G_{2\nu}(s,z_1-z_2)
\\
&\times \int_\R  \ud y\: G_{\nu/2}\left(s,y-\bar{z}\right)
\left(
G_\nu\left(t-s,x-y\right)-G_\nu(t'-s,x'-y)
\right)^2.
\end{align*}
We shall apply the change of variables $z=\bar{z}$ and $w=\Delta z$: since 
\[
 |z_1|^a+|z_2|^a
= \left|z+\frac{w}{2}\right|^a + \left|z-\frac{w}{2}\right|^a
\le 2^{a-1}\left(|z|^a + \left|\frac{w}{2}\right|^a\right) \times 2
= 2^a|z|^a + |w|^a,
\]
we see that 
\begin{align*}
e^{|z_1|^a + |z_2|^a}\le e^{2^a|z|^a + |w|^a} = e^{|w|^a} g(z),
\end{align*}
and it follows that
\begin{align}
\notag
I(t,x;t',x')\le&\:
 c^2 \int_0^t \ud s \int_\R \ud z \left(e^{|\cdot|^a}* 
G_{2\nu}(s,\cdot)\right)(0)  g(z)
\\
\notag &\qquad\times \int_\R\ud y
\:  G_{\nu/2}(s,y-z)\left(G_\nu(t-s,x-y)-G_\nu(t'-s,x'-y)\right)^2\\
\notag \le& 
 \:c^2 \:K_{a,1}(2\nu n) \int_0^t \ud s \int_\R \ud z \: g(z) \\
& \qquad \times \int_\R\ud y
\:  G_{\nu/2}(s,y-z)\left(G_\nu(t-s,x-y)-G_\nu(t'-s,x'-y)\right)^2,
\label{EH:txtx}
\end{align}
where the second inequality is due to \eqref{EH:Kact}.

{\vspace{1em}\bf \noindent Property \eqref{E2:H1}.~~}
For the moment, we continue to assume that $\Vip=0$.  Set $x=x'$.
Apply \eqref{E2:GGD} with $x=x'$ and $\bar{z}$ replaced by $z$, integrate 
over $\ud z$, and use \eqref{EH:txtx} to see that,
\begin{align*}
I(t,x;t',x)
&\le 
c^2\: K_{a,1}(2 \nu n)\int_0^t \ud s \Bigg(
\frac{1}{\sqrt{4\pi\nu(t-s)}} \left(g*G_{\nu/2}(t,\cdot)\right)(x)
\\
&\qquad+
\frac{1}{\sqrt{4\pi\nu(t'-s)}} \left(g*G_{\nu/2}(t',\cdot)\right)(x)\\
&\qquad-
\frac{2}{\sqrt{4\pi\nu\left(\frac{t+t'}{2}-s\right)}} 
\left(g*G_{\nu/2}\left(t+\frac{(t-s)h}{2(t-s)+h},\cdot\right)\right)(x)
\Bigg)\\
&\le c^2 \: K_{a,1}(2\nu n) \int_0^t \ud s \left(I_1+I_2+I_3\right),
\end{align*}
where, letting $h=t'-t$,  
\begin{align*}
I_1 &= 
\left[\left(4\pi\nu (t-s)\right)^{-\frac{1}{2}}+\left(4\pi\nu
(t'-s)\right)^{-\frac{1}{2}}-
\left(\pi \nu\left(\frac{t+t'}{2}-s\right)\right)^{-\frac{1}{2}}\right]
\left(g*G_{\nu/2}(t,\cdot)\right)(x),\\
I_2 &= \frac{1}{\sqrt{4\pi\nu (t'-s)}} \left[
\left(g*G_{\nu/2}(t',\cdot)\right)(x)-\left(g*G_{\nu/2}(t,\cdot)\right)(x)\right
], \\
I_3 &= 
\frac{2}{\sqrt{4\pi\nu \left(\frac{t+t'}{2}-s\right)}} \left[
\left(g*G_{\nu/2}\left(t+\frac{(t-s)h}{2(t-s)+h},
\cdot\right)\right)(x)-\left(g*G_{\nu/2}(t,\cdot)\right)(x)\right].
\end{align*}
Set $\bar{t} = \frac{t+t'}{2}$. By \eqref{EH:Delta-t},
\begin{align*}
\int_0^t I_1 \ud s
&= \frac{1}{\sqrt{\pi\nu}}\left(
\sqrt{t} +\sqrt{t'} - \sqrt{h} -2 \sqrt{\bar{t}} + 2\sqrt{\bar{t}-t}
\right) \: \left(g*G_{\nu/2}(t,\cdot)\right)(x) \\
&\le
\frac{1}{\sqrt{\pi\nu}}\left(
\left|\sqrt{t}-\sqrt{\bar{t}}\right| +\left|\sqrt{t'} - \sqrt{\bar{t}}\right| 
-\sqrt{h} + 2\sqrt{\frac{h}{2}}
\right)\: \left(g*G_{\nu/2}(t,\cdot)\right)(x)\\
&\le
\frac{1}{\sqrt{\pi\nu}}\left(
4\sqrt{\frac{h}{2}} - \sqrt{h}
\right) \: \left(g*G_{\nu/2}(t,\cdot)\right)(x) .
\end{align*}
By Lemma \ref{LH:Delta-g}, for some constants $c_i>0$, $i=1,2$,
\begin{align*}
|I_2| &\le \frac{1}{\sqrt{4\pi\nu (t'-s)}}
\int_\R \ud z\; 
\left| g\left(x-\sqrt{t}\: z\right)-g\left(x-\sqrt{t'}\: z\right)\right| 
G_{\nu/2}(1,z) \\
&\le 
\frac{1}{\sqrt{4\pi\nu (t-s)}} a\: 2^a e^{c_1|x|^a} 
\left(e^{c_2|\cdot|^a}*G_{\nu/2}(1,\cdot)\right)(0)\: \sqrt{h}\;.
\end{align*}
Hence, for all $0\le t\le t'\le n$ and $x\in [-n,n]$,
\[
\int_0^t \ud s \: |I_2| \le 
\frac{a \: 2^a \sqrt{n}}{\sqrt{\pi\nu}}  e^{c_1|n|^a} 
K_{a,c_2}\left(\frac{\nu}{2}\right)\: 
\sqrt{h}\:.
\]
Similarly, because $\frac{(t-s)h}{2(t-s)+h}\le \frac{h}{2}$,
for all $0\le s\le  t\le t'\le n$ and $x\in [-n,n]$,
\[
\int_0^t \ud s\: |I_3| \le 
\frac{a \: 2^a \sqrt{n}}{\sqrt{2\pi\nu}}  e^{c_1|n|^a} 
K_{a,c_2}\left(\frac{\nu}{2}\right)\: 
\sqrt{h}\:.
\]
Therefore, for all $0\le t\le t'\le n$ and $x\in [-n,n]$,
$I(t,x;t',x) 
\le \widetilde{C}_{n,1}^* \sqrt{t'-t}$ with 
\begin{multline*}
\widetilde{C}_{n,1}^* =
\frac{c^2\:K_{a,1}(2 \nu n)}{\sqrt{2\pi\nu}}\Bigg[\left(\sqrt{2}+1\right)\: a 
\: 2^a \sqrt{n} \: e^{c_1|n|^a} K_{a,c_2}\left(\frac{\nu}{2}\right)\:
\\+\left(4-\sqrt{2}\right) \sup_{(s,y)\in [0,n]\times 
[-n,n]}\left(e^{2^a|\cdot|^a}*G_{\nu/2}(s,\cdot)\right)(y)\Bigg].
\end{multline*}
Finally, as for \eqref{E2:H1}, the contribution of the constant $\Vip$ can be 
calculated by using
Proposition \ref{PH:G}. Therefore, one can choose 
\[
C_{n,1}^* = \Vip^2 \frac{\sqrt{2}-1}{\sqrt{\pi\nu}} + 2 \:\widetilde{C}_{n,1}^*.
\]

{\bf\noindent Property \eqref{E2:H3}.~~}
Assume again that $\Vip=0$. 
Set $t=t'$ and $\bar{x}=\frac{x+x'}{2}$. Recalling \eqref{EH:GGG}, we see 
that the inequality \eqref{EH:txtx} reduces to
\begin{multline*}
 I(t,x;t,x') \le  c^2\: K_{a,1}(2\nu n) \int_0^t \ud s \int_\R \ud z 
\: g(z)\Bigg\{ \frac{1}{\sqrt{4\pi\nu (t-s)}} 
\left[G_{\nu/2}(t,x-z)+G_{\nu/2}(t,x'-z)\right]\\
- 2\: G_{2\nu}(t-s,x-x') \: G_{\nu/2}(t,\bar{x}-z)
\Bigg\}.
\end{multline*}
Then integrate over $\ud s$ using Lemma \ref{LH:IntGds}: 
\begin{multline*}
 I(t,x;t,x') \le  c^2\: K_{a,1}(2\nu n) \int_\R \ud z 
\: g(z)\Bigg\{ \frac{\sqrt{t}}{\sqrt{\pi\nu}} 
\left[G_{\nu/2}(t,x-z)+G_{\nu/2}(t,x'-z)\right]\\
-2 \left[ 2 t \: G_{2\nu}(t,x-x') - \frac{1}{2\nu} \: |x-x'| \: 
\Erfc\left(\frac{|x-x'|}{\sqrt{4\nu t}}\right)\right] G_{\nu/2}(t,\bar{x}-z)
\Bigg\}.
\end{multline*}
Denote $F(x)=\left(g* G_{\nu/2}(t,\cdot)\right)(x)$. Then integrating over $\ud 
z$ gives
\begin{align*}
 I(t,x;t,x') \le&\:  c^2\: K_{a,1}(2\nu n) \Bigg\{ 
\frac{\sqrt{t}}{\sqrt{\pi\nu}} 
\left[F(x)+F(x')\right]\\
& \qquad\quad - 2\left[ \frac{\sqrt{t}}{\sqrt{\pi\nu}} 
e^{-\frac{(x-x')^2}{4\nu t}} - \frac{1}{2\nu} \: |x-x'| \: 
\Erfc\left(\frac{|x-x'|}{\sqrt{4\nu t}}\right)\right]  F(\bar{x})
\Bigg\}\\
\le &\:
c^2\: K_{a,1}(2\nu n) \Bigg\{ 
 \frac{\sqrt{t}}{\sqrt{\pi\nu}} \left|F(x)-F(\bar{x})\right|
+ \frac{\sqrt{t}}{\sqrt{\pi\nu}} \left|F(x')-F(\bar{x})\right|\\
& + \frac{2\sqrt{t}}{\sqrt{\pi\nu}} \left(1- e^{-\frac{|x-x'|^2}{4\nu 
t}}\right) F(\bar{x})+ \frac{1}{\nu} \: |x-x'|\: F(\bar{x})\Bigg\}.
\end{align*}
Notice that $0\le 1-e^{-x^2/2} \le \widetilde{C} \: |x|$, where the universal 
constant $\widetilde{C}$ is given in Lemma \ref{LH:GG2G}. 
By part (2) of Lemma \ref{LH:Delta-g}, for some constants $c_i$, $i=3,4$, 
\begin{align*}
\left|F(x)-F(\bar{x})\right|
 &\le \int_\R\ud z 
 \left|g\left(x-\sqrt{t}\: z\right)-g\left(\bar{x}-\sqrt{t}\: z\right)\right| 
G_{\nu/2}(1,z)\\
 &\le c_3 \left(e^{c_4 |\cdot|^a} * G_{\nu/2}(1,\cdot)\right)(0) \: \left|x- 
\bar{x}\right|\\
 &= \frac{c_3}{2} \: K_{a,c_4}\left(\frac{\nu}{2}\right)\: |x-x'|.
\end{align*}
Similarly, $\left|F(x')-F(\bar{x})\right|\le \frac{c_3}{2} \: 
K_{a,c_4}\left(\frac{\nu}{2}\right)\: |x-x'|$. Hence, 
\begin{align*}
 I(t,x;t,x') \le c^2\: K_{a,1}(2\nu n) \Bigg\{ 
 \frac{c_3 \: \sqrt{n}}{\sqrt{\pi\nu}} K_{a,c_4}\left(\frac{\nu}{2}\right)
 + \left(\frac{\widetilde{C}\:\sqrt{2}}{\nu\sqrt{\pi}} + 
\frac{1}{\nu}\right)  
F(\bar{x})\Bigg\}\:|x-x'|\:.
\end{align*}
Therefore, for all $0\le t\le n$ and $x,x'\in [-n,n]$, $I(t,x;t,x')
\le \widetilde{C}_{n,3}^* \left|x-x'\right|$ with 
\[
\widetilde{C}^*_{n,3}= c^2\: K_{a,1}(2\nu n) \Bigg\{ 
 \frac{c_3 \: \sqrt{n}}{\sqrt{\pi\nu}} K_{a,c_4}\left(\frac{\nu}{2}\right)
 + \left(\frac{\widetilde{C}\:\sqrt{2}}{\nu\sqrt{\pi}} + 
\frac{1}{\nu} \right)
\sup_{(s,y)\in [0,n]\times[-n,n]}
\left(g*G_{\nu/2}(s,\cdot)\right)(y)
\Bigg\}\;,
\]
and $\widetilde{C}_{n,3}^*<+\infty$ by definition of $g$.
Finally, the contribution of the constant $\Vip$ in \eqref{E2:H3} is given in
Proposition \ref{PH:G}. Therefore, one can choose 
\[
C_{n,3}^* = \frac{\Vip^2}{\nu} + 2 \: \widetilde{C}_{n,3}^*\:.
\]

{\bf\noindent Property \eqref{E2:H5}.~~}
As for \eqref{E2:H5}, notice that $J_0^*(t,x)\le c \left(e^{|\cdot|^a} * 
G_\nu(t,\cdot)\right)(x)$.
By checking the proof of part (1) (see \eqref{E2:Cn5}), one can choose, 
\begin{align*}
C_{n,5}^* = \frac{\Vip^2 }{\sqrt{\pi \nu}}+
2\:c^2 \sqrt{\pi/\nu} \sup_{(s,y)\in [0,n]\times[-n,n]} \left(e^{|\cdot|^a} * 
G_\nu(2s,\cdot)\right)^2(y).
\end{align*}
This completes the proof of part (2) of Proposition \ref{P2:Holder-I}.
\end{proof}

\vspace{1em}
\begin{proof}[Proof of Proposition \ref{P2:Holder-II} (2)]
If $\mu\in\calM_H^*(\R)$, then by Proposition \ref{P2:Holder-I} (2), the 
l.h.s. of \eqref{E2:H2} is
bounded by
\[C_{n,1}^*\sqrt{t'-t}\left(1 \star G_\nu^2 \right)(t,x)
=C_{n,1}^* \frac{\sqrt{t}}{\sqrt{\pi \nu}} \sqrt{t'-t}
\le C_{n,1}^* \frac{\sqrt{n}}{\sqrt{\pi \nu}} \sqrt{t'-t}.
\]
Hence, $C_{n,2}^* = \frac{\sqrt{n}}{\sqrt{\pi\nu}} C_{n,1}^*$. 
The same arguments apply to the other two constants $C_{n,4}^*$ and 
$C_{n,6}^*$, i.e., \eqref{E2:H4} and \eqref{E2:H6}.
Note that it was not possible to use the above argument in the proof of part 
(1) of Proposition \ref{P2:Holder-I}. This completes the proof of Proposition 
\ref{P2:Holder-II} (2).
\end{proof}

\subsection{Checking the initial condition}
\label{ss:Weak}
\begin{proof}[Proof of Proposition \ref{PH:Weak}]
Because $u(t,x) = J_0(t,x)+I(t,x)$, and because it is standard that (see 
\cite[Chapter 7, Section 6]{friedman} and also \cite[Lemma 2.6.14, 
p.89]{LeChen13Thesis}),
\[
\lim_{t\rightarrow 0_+} \int_\R \ud x \: J_0(t,x) \phi(x) = \int_\R\mu(\ud x)\: 
\phi(x),
\]
we only need to prove that 
\[
\lim_{t\rightarrow 0_+} \int_\R \ud x \: I(t,x) \phi(x) = 0 \quad\text{in 
$L^2(\Omega)$}.
\]
Recall that the Lipschitz continuity of $\rho$ implies the linear growth 
condition \eqref{EH:LinGrow}. Fix $\phi\in C_c^\infty(\R)$.
Denote $L(t):=\int_\R I(t,x) \phi(x)\ud x$.
By the stochastic Fubini theorem (see \cite[Theorem 2.6, p. 296]{Walsh86}), 
whose assumptions are easily checked,
\[
L(t) =  \int_0^t \int_\R \left(\int_\R \ud x\;  G_\nu(t-s,x-y) \phi(x)\right) 
\rho(u(s,y)) W(\ud s,\ud y).
\]
Hence, by \eqref{EH:LinGrow},
\begin{align*}
\E\left[L(t)^2\right]\le 
\Lip_\rho^2 \int_0^t \ud s \int_\R  \ud y 
\left(\int_{\R} \ud x\; G_\nu(t-s,x-y)\phi(x)\right)^2
\left(\Vip^2+\Norm{u(s,y)}_2^2 \right).
\end{align*}
By the moment formula \eqref{E2:SecMom-Up}, we can write the above upper bound 
as
\[
\E\left[L(t)^2\right]\le \Lip_\rho^2 \left[L_1(t) + 
L_2(t)+L_3(t)+L_4(t)\right],
\] 
where
\begin{align*}
L_1(t) &=
\int_0^t \ud s \int_\R  \ud y 
\left(\int_{\R} \ud x\; G_\nu(t-s,x-y)\phi(x)\right)^2
J_0^2(s,y),\\ 
L_2(t) &=
\int_0^t \ud s \int_\R  \ud y 
\left(\int_{\R} \ud x\; G_\nu(t-s,x-y)\phi(x)\right)^2
\left(J_0^2\star \overline{\calK}\:\right)(s,y),\\
L_3(t) &=
\Vip^2 \int_0^t \ud s \: \overline{\calH}(s) \int_\R  \ud y 
\left(\int_{\R} \ud x\; G_\nu(t-s,x-y)\phi(x)\right)^2,\\
L_4(t) &=
\Vip^2 \int_0^t \ud s  \int_\R  \ud y 
\left(\int_{\R} \ud x\; G_\nu(t-s,x-y)\phi(x)\right)^2.
\end{align*}
From now on, we may assume that $\mu\in \calM_{H,+}(\R)$, because otherwise, 
one 
can simply replace the above $J_0(s,y)$ by $J_0^*(s,y) = \left(|\mu|* 
G_\nu(s,\circ)\right)(y)$.

{\vspace{1em}\bf\noindent (1)~~}
Consider $L_1(t)$ first. Write out both $J_0^2(s,y)$ and $\left(\int_\R 
\ud x \; G_\nu(t-s,x-y) \phi(x)\right)^2$ in the forms of double integrals, and 
apply 
Lemma \ref{LH:GG}, to see that
\begin{equation}
 \label{EH:L1}
\begin{aligned}
L_1(t) =&
\int_0^t \ud s \int_\R  \ud y \left(\iint_{\R^2} \ud x_1 \ud 
x_2 \; G_{\nu/2}(t-s,\bar{x}-y)G_{2\nu}(t-s,\Delta x) 
\phi(x_1)\phi(x_2) \right)\\
 &\qquad\times\iint_{\R^2} \mu(\ud z_1)\mu(\ud z_2) \; 
G_{\nu/2}(s,\bar{z}-y)G_{2\nu}(s,\Delta z), 
\end{aligned}
\end{equation}
where $\bar{x}= \frac{x_1+x_2}{2}$, $\Delta x = x_1-x_2$ and similarly for 
$\bar{z}$ and $\Delta z$.
Integrate over $\ud y$ first using the semigroup property of the heat 
kernel and then integrate over $\ud s$ by using Lemma \ref{LH:IntGG}, we see 
that
\[
L_1(t)= \iint_{\R^2}\ud x_1\ud x_2 \; \phi(x_1)\phi(x_2) 
\iint_{\R^2}\mu(\ud z_1)\mu(\ud z_2)\; G_{\nu/2}(t,\bar{x}-\bar{z}) 
\frac{1}{4\nu} \Erfc\left(\frac{1}{\sqrt{4\nu t}}\left[|\Delta x|+ 
|\Delta z|\right]\right).
\]
By \eqref{EH:ErfcBd}, 
\begin{align*}
\Erfc\left(\frac{1}{\sqrt{4\nu t}}\left[|\Delta x|+ 
|\Delta z|\right]\right)
&\le e^{-\frac{\left(|\Delta x|+|\Delta z|\right)^2}{4\nu t}}
\le e^{-\frac{|\Delta x|^2}{4\nu t}}
e^{-\frac{|\Delta z|^2}{4\nu t}}=4\pi \nu \sqrt{t}\: 
G_{2\nu}\left(1,\frac{\Delta x}{\sqrt{t}}\right) 
G_{2\nu}(t,\Delta z).
\end{align*}
By the change of variables $y=(x_1+x_2)/2$ and $w=(x_1-x_2)/\sqrt{t}$,
\begin{align*}
L_1(t)\le&\pi\: t  \iint_{\R^2} \ud y \ud w\:G_{2\nu}(1,w)
\phi\left(y+\frac{\sqrt{t}}{2}w\right)\phi\left(y-\frac{\sqrt{t}}{2}w\right)\\
&\qquad\quad\times \iint_{\R^2}\mu(\ud z_1)\mu(\ud z_2) \: 
G_{\nu/2}(t,y-\bar{z}) 
G_{2\nu}(t,\Delta z) .
\end{align*}
By Lemma \ref{LH:Split}, 
\[
\iint_{\R^2}\mu(\ud z_1)\mu(\ud z_2) \: G_{\nu/2}(t,y-\bar{z}) 
G_{2\nu}(t,\Delta z) \le 2\left(\mu * G_{2\nu}(t,\cdot)\right)^2(y) = 2 
J_0^2(2t,y).
\]
For some constants $a$ and $c\ge 0$, $|\phi(x)|\le c \: 1_{[-a,a]}(x)$. 
If $|y|>a$, then the two sets $\left\{w\in\R: 
\left|\frac{\sqrt{t}}{2}w\pm y\right|\le a\right\}$ have empty intersection.
Hence, 
\begin{align*}
L_1(t) &\le 2 c^2 \pi  \int_{|y|\le a}\ud y\: t \:J_0^2(2t ,y) \int_\R\ud w\: 
G_{2\nu}(1,w )
=2 c^2 \pi  \int_{|y|\le a}\ud y\: t \:J_0^2(2t ,y).
\end{align*}
Clearly, by assuming that $t\le 1$,
\[
\sqrt{t}\: J_0(2t,y) =
\int_{\R} \mu(\ud x) \frac{1}{\sqrt{4\pi \nu}} 
e^{-\frac{(y-x)^2}{4\nu t}} 
\le 
\int_{\R} \mu(\ud x) \frac{1}{\sqrt{4\pi \nu}} 
e^{-\frac{(y-x)^2}{4\nu}} = 
J_0(2,y).
\]
Hence, Lebesgue's dominated convergence theorem implies that
\[
\lim_{t\rightarrow 0} \sqrt{t} \: J_0(2t, y) = 0.
\]
Because $\int_{|y|\le a} \ud y\: J_0^2(2,y)<+\infty$, by another application of 
Lebesgue's dominated convergence theorem, we see that $\lim_{t\rightarrow 0} 
L_1(t) =0$.

{\vspace{1em}\bf\noindent (2)~~}
As for $L_2(t)$, because $\overline{\calK}(t,x)\le G_{\nu/2}(t,x) 
\frac{1}{\sqrt{t}}
h(t)$, where $h(t) := 
\Lip_\rho^2(\:\frac{1}{\sqrt{4\pi\nu}}+\frac{\Lip_\rho^2 
\sqrt{t}}{2\nu}e^{\frac{\Lip_\rho^4 t}{4\nu}}\:)$
is a nondecreasing function in $t$, we see that as in \eqref{EH:L1},
\begin{align*}
 L_2(t)\le& 
\int_0^t \ud s \int_\R  \ud y 
\iint_{\R} \ud x_1\ud x_2\; 
G_{\nu/2}(t-s,\bar{x}-y)G_{2\nu}(t-s,\Delta x)\phi(x_1)\phi(x_2)\\
&\quad\qquad\times \int_0^s \ud r \int_\R \ud w\: G_{\nu/2}(s-r,y-w) 
\frac{1}{\sqrt{s-r}} h(t)\\
&\quad\qquad\times \iint_{\R^2}\mu(\ud z_1)\mu(\ud z_2) \: 
G_{\nu/2}(r,\bar{z}-w)G_{2\nu}(r,\Delta z).
\end{align*}
Integrate first over $\ud w$ using the 
semigroup property of the heat kernel, and then integrate over $\ud r$ using 
\eqref{EH:ErfcGG}, to find that 
\begin{align*}
L_2(t)\le& 
 \pi h(t) \sqrt{t} \int_0^t \ud s \int_\R  \ud y 
\iint_{\R} \ud x_1\ud x_2\; 
G_{\nu/2}(t-s,\bar{x}-y)\: G_{2\nu}(t-s,\Delta x)\phi(x_1)\phi(x_2)\\
&\times \iint_{\R^2}\mu(\ud z_1)\mu(\ud z_2) \: 
G_{\nu/2}(s,\bar{z}-y)G_{2\nu}(s,\Delta z).
\end{align*}
Comparing the above bound with \eqref{EH:L1}, we see that 
\[
L_2(t) \le \pi\sqrt{t} \: h(t) L_1(t)\rightarrow 0, \quad\text{as 
$t\rightarrow 0$.}
\]

{\vspace{1em}\bf\noindent (3)~~} Notice that  $L_3(t)\le \overline{\calH}(t)\: 
L_4(t)$, so 
we only need to consider $L_4(t)$, which is a special case of $L_1(t)$ with 
$\mu(\ud x)=\Vip \ud x$.
Since this $\mu$ belongs to $\calM_H(\R)$, $\lim_{t\rightarrow 0_+} L_4(t)=0$ 
by part (1). This completes the proof of Proposition \ref{PH:Weak}.
\end{proof}

\section{Appendix}

\begin{lemma} \label{LH:MHR}
If $|f(x)|\le c_1 e^{c_2 |x|^a}$ for all $x\in\R$ with 
$c_1,c_2>0$ and $a\in\: ]1,2[\;$, 
then there is $c_3<+\infty$ such that for all $b\in \: ]a, 2[\;$, $|f(x)|\le 
c_3 e^{|x|^b}$ for all $x\in\R$.
\end{lemma}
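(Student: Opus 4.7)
The plan is to reduce the inequality $|f(x)| \le c_3 e^{|x|^b}$ to an elementary real-variable statement: it suffices to show that for each $b \in \:]a,2[\:$, the function
\[
g(r) := c_2 r^a - r^b, \qquad r \ge 0,
\]
is bounded above on $\R_+$. Once this is established with supremum $M = M(a,b,c_2)$, the lemma follows by taking $c_3 = c_1 e^M$, since then
\[
|f(x)| \le c_1 e^{c_2 |x|^a} = c_1 e^{g(|x|) + |x|^b} \le c_1 e^M e^{|x|^b}.
\]
Note that the constant $c_3$ depends on $b$ (and on $a, c_1, c_2$), which is the correct reading of the statement.

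To verify that $g$ is bounded above, I would use calculus on $\R_+$. For $r > 0$,
\[
g'(r) = a c_2 r^{a-1} - b r^{b-1} = r^{a-1} \left( a c_2 - b r^{b-a} \right).
\]
Since $b - a > 0$ (because $b > a$), we have $b r^{b-a} \to +\infty$ as $r \to +\infty$, so $g'(r) < 0$ for $r$ sufficiently large. In fact $g'(r) = 0$ at the unique positive point $r^* = (a c_2 / b)^{1/(b-a)}$, with $g' > 0$ on $]0, r^*[$ and $g' < 0$ on $]r^*, \infty[$. Hence $g$ attains its global maximum at $r^*$, and we may set
\[
M := g(r^*) = c_2 (r^*)^a - (r^*)^b < +\infty.
\]
Combined with $g(0) = 0 \le M$, this gives $g(r) \le M$ for all $r \ge 0$, completing the argument.

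The proof is essentially routine; no step presents a real obstacle. The only small point worth noting is that the hypothesis $b > a$ is used in an essential way (it forces $g'$ to change sign), while the upper restriction $b < 2$ plays no role in this lemma itself; it is relevant only to ensure that the resulting bound $c_3 e^{|x|^b}$ is integrable against Gaussian kernels, which is what places the density in $\calM_H(\R)$ and justifies the inclusion $\calM_H^*(\R) \subset \calM_H(\R)$ used after the lemma.
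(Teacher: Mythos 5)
Your proof is correct and follows essentially the same strategy as the paper's: both reduce the claim to showing that $r \mapsto c_2 r^a - r^b$ is bounded above on $\R_+$. The paper gets the bound $c_2^{b/(b-a)}$ more crudely, by noting that $c_2 r^a - r^b \le 0$ once $r > c_2^{1/(b-a)}$ and bounding $c_2 r^a$ on the remaining compact interval, whereas you locate the exact critical point $r^* = (ac_2/b)^{1/(b-a)}$ by calculus; the difference is only in how one produces the finite supremum, and your observation that the restriction $b<2$ is not needed for the lemma itself but only for the later integrability against Gaussians is accurate.
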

\begin{proof}
Notice that $c_2|x|^a\ge |x|^b$ if and only if $|x|\le c_2^{\frac{1}{b-a}}$. 
Hence, 
$ c_2|x|^a-|x|^b\le  c_2 \: c_2^{\frac{a}{b-a}}-0= c_2^{\frac{b}{b-a}}$. 
Therefore,
$c_1 \exp\left(c_2 |x|^a - |x|^b\right) \le  c_1 
\exp(c_2^{\frac{b}{b-a}}) =: c_3$.
\end{proof}

\begin{proposition}[Proposition 2.16 of \cite{ChenDalang13Heat}]
\label{PH:G}
There are three universal and optimal constants $C_1=1$, $C_2 
=\frac{\sqrt{2}-1}{\sqrt{\pi}}$, and
$C_3 = \frac{1}{\sqrt{\pi}}$,
such that for all $s,t$ with $0\le s\le t$ and $x\in\R$,
\begin{gather*}
 \int_0^t\ud r\int_\R \ud z \left[G_\nu(t-r,x-z)-G_\nu(t-r,y-z)\right]^2
\le \frac{C_1}{\nu} |x-y|\;,\\
 \int_0^s\ud r\int_\R \ud z \left[G_\nu(t-r,x-z)-G_\nu(s-r,x-z)\right]^2
 \le \frac{C_2}{\sqrt{\nu}} \sqrt{t-s}\;,\\
\int_s^t\ud r\int_\R \ud z \left[G_\nu(t-r,x-z)\right]^2
\le \frac{C_3}{\sqrt{\nu}} \sqrt{t-s}\;,\\
\notag
\iint_{\R_+\times\R} \left(G_\nu(t-r,x-z)-G_\nu(s-r,y-z)\right)^2
\ud
r \ud z
\le 2C_1 \left(\frac{|x-y|}{\nu} +\frac{\sqrt{|t-s|}}{\sqrt{\nu}} \right)\;,
\end{gather*}
where we use the convention that $G_\nu(t,\cdot)\equiv 0$ if $t\le 0$.
\end{proposition}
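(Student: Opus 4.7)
The plan is to establish the four bounds in the order (3), (1), (2), (4), since (4) follows by triangle inequality from (1)--(3). Throughout I will repeatedly use two elementary identities: the $L^2(\R)$--inner product $\int_\R G_\nu(u,z) G_\nu(u',z)\,\ud z=\frac{1}{\sqrt{2\pi\nu(u+u')}}$ (a consequence of the semigroup property, essentially Lemma \ref{LH:GG} with $x_1=x_2=0$), and Plancherel's theorem together with the Fourier transform $\widehat{G_\nu(u,\cdot)}(\xi)=e^{-\nu u \xi^{2}/2}$.

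\textbf{Step 1 (bound (3)).} By the identity $\int_\R G_\nu^{2}(u,z)\,\ud z=\frac{1}{2\sqrt{\pi \nu u}}$, the left--hand side equals $\int_s^t\frac{\ud r}{2\sqrt{\pi\nu(t-r)}}=\frac{\sqrt{t-s}}{\sqrt{\pi\nu}}$, giving $C_3=1/\sqrt{\pi}$ with equality (hence optimal).

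\textbf{Step 2 (bound (1)).} Plancherel gives
\[
\int_\R\!\!\left[G_\nu(t-r,x-z)-G_\nu(t-r,y-z)\right]^{2}\ud z
=\frac{1}{\pi}\int_\R (1-\cos(\xi(x-y)))\,e^{-\nu(t-r)\xi^{2}}\,\ud \xi.
\]
Swapping the order of integration and computing $\int_0^t e^{-\nu(t-r)\xi^{2}}\,\ud r=(1-e^{-\nu t\xi^{2}})/(\nu\xi^{2})\le 1/(\nu\xi^{2})$, the left--hand side of (1) is at most $\frac{1}{\pi\nu}\int_\R\frac{1-\cos(\xi(x-y))}{\xi^{2}}\,\ud \xi=\frac{|x-y|}{\nu}$, using the classical formula $\int_\R (1-\cos(a\xi))/\xi^{2}\,\ud \xi=\pi|a|$. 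This gives $C_1=1$, with optimality attained as $t\to\infty$.

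\textbf{Step 3 (bound (2)).} By Plancherel again,
\[
\int_\R\!\!\left[G_\nu(t-r,x-z)-G_\nu(s-r,x-z)\right]^{2}\ud z
=\frac{1}{2\pi}\int_\R e^{-\nu(s-r)\xi^{2}}\left(1-e^{-\nu(t-s)\xi^{2}/2}\right)^{\!2}\ud \xi.
\]
Integrating $r\in[0,s]$ produces the factor $(1-e^{-\nu s\xi^{2}})/(\nu\xi^{2})\le 1/(\nu\xi^{2})$. After the change of variables $v=\xi\sqrt{\nu(t-s)/2}$ the bound reduces to
\[
\frac{\sqrt{t-s}}{2\pi\sqrt{2\nu}}\int_\R \frac{(1-e^{-v^{2}})^{2}}{v^{2}}\,\ud v.
\]
A short integration by parts evaluates the last integral as $4\int_0^\infty(e^{-v^{2}}-e^{-2v^{2}})\,\ud v=2\sqrt{\pi}(1-1/\sqrt 2)\cdot 2$, yielding $C_2=(\sqrt 2-1)/\sqrt{\pi}$; optimality is reached in the limit $s\to\infty$.

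\textbf{Step 4 (bound (4)).} Assume $s\le t$. On $[0,s]\times\R$, write
\[
G_\nu(t-r,x-z)-G_\nu(s-r,y-z)=\bigl[G_\nu(t-r,x-z)-G_\nu(t-r,y-z)\bigr]+\bigl[G_\nu(t-r,y-z)-G_\nu(s-r,y-z)\bigr],
\]
square, apply $(a+b)^{2}\le 2a^{2}+2b^{2}$, and use bounds (1) and (2) to get $2C_1|x-y|/\nu+2C_2\sqrt{t-s}/\sqrt{\nu}$. On $[s,t]\times\R$, the second heat kernel vanishes by convention, so bound (3) contributes $C_3\sqrt{t-s}/\sqrt{\nu}$. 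Since $2C_2+C_3=(2\sqrt 2-1)/\sqrt{\pi}\le 2=2C_1$, the total is bounded by $2C_1(|x-y|/\nu+\sqrt{t-s}/\sqrt{\nu})$.

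The one mild subtlety is the interchange of orders of integration in Steps 2--3 and the justification of Plancherel in that setting; both are routine by Fubini since all integrands are non-negative (or absolutely integrable). I expect no real obstacles, as each step is a direct computation. If the authors additionally claim optimality, the limiting cases $t\to\infty$ (for $C_1,C_2$) and the fact that Step 1 uses only equalities (for $C_3$) furnish the matching lower bounds.
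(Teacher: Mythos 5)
This proposition is imported verbatim from \cite{ChenDalang13Heat}; the present paper does not re-prove it, so there is no in-paper argument to compare against. Assessing your proposal on its own merits: the Fourier-analytic proof is correct, and it yields exactly the claimed optimal constants. Step~1 is an identity (hence $C_3$ sharp). In Step~2, Plancherel together with $\int_0^t e^{-\nu(t-r)\xi^2}\,\ud r \leq (\nu\xi^2)^{-1}$ and $\int_\R (1-\cos(a\xi))\xi^{-2}\,\ud\xi = \pi|a|$ gives $C_1=1$, with optimality as $t\to\infty$ by monotone convergence. In Step~3, the substitution and the integration by parts $\int_0^\infty (1-e^{-v^2})^2 v^{-2}\,\ud v = 4\int_0^\infty(e^{-v^2}-e^{-2v^2})\,\ud v = 2\sqrt{\pi}(1-1/\sqrt2)$ do produce $C_2=(\sqrt2-1)/\sqrt\pi$ after doubling to get $\int_\R$ and multiplying by the prefactor $\sqrt{t-s}/(2\pi\sqrt{2\nu})$; the display's bookkeeping of the factor of $2$ (half-line vs.\ full line) is slightly garbled, but the constant is right, and optimality is attained in the limit $s\to\infty$ with $t-s$ fixed. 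Step~4's decomposition of $\R_+\times\R$ into $[0,s]\times\R$ and $[s,t]\times\R$, the use of $(a+b)^2\le 2a^2+2b^2$, and the arithmetic check $2C_2+C_3=(2\sqrt2-1)/\sqrt\pi<2=2C_1$ are all correct.

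The approach is genuinely different in flavor from what the cited source (and the ambient toolkit of this paper) suggests. \cite{ChenDalang13Heat} works on the space side: one expands $(G_\nu-G_\nu)^2$ via the semigroup identities of Lemma~\ref{LH:GG}, integrates in $z$ to get expressions of the form $\tfrac{1}{\sqrt{\pi\nu(t-r)}}\bigl(1-e^{-(x-y)^2/(4\nu(t-r))}\bigr)$ and $G_{2\nu}(2(t-r)-\cdot,\cdot)$, and then handles the $r$-integral with error-function identities of the kind recorded in Lemmas~\ref{LH:IntGG} and~\ref{LH:IntGds}; the sharp constants emerge from explicit $\Gamma$- and $\Erfc$-evaluations. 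Your Plancherel route is cleaner in one respect: it makes the optimality of $C_1$ and $C_2$ transparent, since after passing to the Fourier side the $r$-integral increases monotonically to $(\nu\xi^2)^{-1}$, so the sup over $t$ (resp.\ $s$) is literally the limit. The real-variable route, on the other hand, stays inside the toolbox the paper uses everywhere else and avoids invoking the Fourier transform, which the paper deliberately eschews because $\mu$ need not be tempered --- though for these particular estimates (which involve only heat kernels, not $\mu$) that concern does not bite. Either proof is acceptable.
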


\begin{lemma}[Lemma 4.3 of \cite{ChenDalang13Heat}]\label{LH:GG}
For all $t$, $s>0$ and $x$, $y\in\R$, we have that $ G_\nu^2(t,x) =
\frac{1}{\sqrt{4\pi\nu t}} G_{\nu/2}(t,x)$ and $G_\nu(t,x)G_\nu\left(s,y\right)
= G_\nu \left(\frac{ts}{t+s},\frac{s x+t
y}{t+s}\right)
G_\nu\left(t+s,x-y\right)$.
\end{lemma}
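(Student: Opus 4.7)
The plan is to verify both identities by direct computation from the definition
\[
G_\nu(t,x) = \frac{1}{\sqrt{2\pi\nu t}}\exp\left(-\frac{x^2}{2\nu t}\right).
\]
Both statements are purely algebraic facts about Gaussian densities; neither requires any input beyond the definition.

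For the first identity, I would simply square $G_\nu(t,x)$: the exponent doubles from $-x^2/(2\nu t)$ to $-x^2/(\nu t)$, which is precisely the exponent of $G_{\nu/2}(t,x)$. The prefactors then match in one line, since $(2\pi\nu t)^{-1} = (4\pi\nu t)^{-1/2}(\pi\nu t)^{-1/2}$, and the second factor is exactly the normalizing constant of $G_{\nu/2}(t,\cdot)$.

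For the second identity, my plan is to check the normalizing constants and the exponents separately. The two prefactors on the right are $(2\pi\nu\, ts/(t+s))^{-1/2}$ and $(2\pi\nu(t+s))^{-1/2}$, whose product telescopes: the factors $\sqrt{t+s}$ cancel, leaving $(2\pi\nu)^{-1}(ts)^{-1/2}$, which matches the prefactor of $G_\nu(t,x)G_\nu(s,y)$ on the left. The equality of exponents, after clearing the common denominator $2\nu\, ts(t+s)$, reduces to the polynomial identity
\[
(sx+ty)^2 + ts(x-y)^2 = (s+t)(sx^2+ty^2),
\]
which expands mechanically: the cross terms $\pm 2stxy$ from the two squares cancel, and the remaining quadratic terms collect to $s(s+t)x^2 + t(s+t)y^2$, which is exactly the right-hand side.

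No real obstacle is present; the only ``work'' is the one-line polynomial expansion above. Conceptually, the second identity is the classical product-of-Gaussians factorization: if $X$ and $Y$ are independent centered Gaussians with variances $\nu t$ and $\nu s$, it encodes the fact that the joint density of $(X,Y)$ factors into the density of the difference $X-Y \sim N(0,\nu(t+s))$ times the density of the precision-weighted mean $(sX+tY)/(t+s)$, which has variance $\nu\, ts/(t+s)$. This viewpoint makes the specific form of the identity transparent and essentially dictates the polynomial identity above, so no guessing is required.
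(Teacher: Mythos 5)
Your proof is correct, and the computations check out: the polynomial identity $(sx+ty)^2 + ts(x-y)^2 = (s+t)(sx^2+ty^2)$ is exactly what the exponent comparison requires, and the prefactor bookkeeping is right. Note that the paper itself does not prove this lemma — it is quoted directly as Lemma 4.3 of \cite{ChenDalang13Heat} — so there is no in-text proof to compare against; your direct verification from the definition of $G_\nu$ is the natural argument, and the probabilistic interpretation you give (factorization of a bivariate Gaussian density into the density of the difference times the density of the precision-weighted mean) is a nice sanity check that also explains where the formula comes from.
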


\begin{lemma}[Lemma 4.4 of 
\cite{ChenDalang13Heat}]\label{LH:Split}
 For all $x$, $z_1$ $z_2\in\R$ and $t,s>0$, denote
$\bar{z} = \frac{z_1+z_2}{2}$, $\Delta z = z_1-z_2$. Then
$G_1\left(t,x-\bar{z}\right)
G_1\left(s,\Delta z\right)
\le \frac{(4t) \vee s}{\sqrt{t s}}
G_1\!\left((4t)\vee s,x-z_1\right)
G_1\!\left((4t)\vee s,x-z_2\right)$, where $a\vee
b:=\max(a,b)$.
\end{lemma}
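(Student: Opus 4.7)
The statement is a pointwise inequality between Gaussian densities, so the plan is to reduce it to a purely algebraic comparison of quadratic exponents.

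First, I would write out both sides explicitly. Setting $\tau := (4t)\vee s$, the left-hand side equals
\[
G_1(t,x-\bar z)\,G_1(s,\Delta z) = \frac{1}{2\pi\sqrt{ts}}\exp\!\left(-\frac{(x-\bar z)^2}{2t}-\frac{(\Delta z)^2}{2s}\right),
\]
while the right-hand side equals
\[
\frac{\tau}{\sqrt{ts}}\cdot\frac{1}{2\pi\tau}\exp\!\left(-\frac{(x-z_1)^2+(x-z_2)^2}{2\tau}\right)
=\frac{1}{2\pi\sqrt{ts}}\exp\!\left(-\frac{(x-z_1)^2+(x-z_2)^2}{2\tau}\right).
\]
Thus the prefactors cancel, and the inequality reduces to showing that the exponent on the right is no smaller (in absolute value) than the one on the left.

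Next, I would use the elementary identity
\[
(x-z_1)^2+(x-z_2)^2 = 2(x-\bar z)^2 + \tfrac12 (\Delta z)^2,
\]
which follows from $z_1=\bar z+\Delta z/2$, $z_2=\bar z-\Delta z/2$. Substituting this into the right-hand exponent, the required inequality becomes
\[
\frac{(x-\bar z)^2}{2t}+\frac{(\Delta z)^2}{2s}\;\ge\;\frac{(x-\bar z)^2}{\tau}+\frac{(\Delta z)^2}{4\tau}.
\]

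Finally, I would verify this term by term. Comparing coefficients of $(x-\bar z)^2$ requires $\tau\ge 2t$, which follows from $\tau\ge 4t$. Comparing coefficients of $(\Delta z)^2$ requires $\tau\ge s/2$, which follows from $\tau\ge s$. Both are built into the definition $\tau=(4t)\vee s$, which completes the proof. There is no real obstacle here; the only minor point of care is keeping track of the factor $2$ coming from the identity on $(x-z_1)^2+(x-z_2)^2$, which is precisely what forces the choice of the constant $4$ in $(4t)\vee s$.
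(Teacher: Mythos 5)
Your proof is correct. The paper cites this as Lemma~4.4 of \cite{ChenDalang13Heat} and gives no proof of its own, so there is no argument in the present paper to compare against; the direct ``expand both sides and compare quadratic exponents'' strategy you use is the natural one. Two small remarks are worth making. First, after cancelling the prefactors you write that the inequality reduces to showing that the exponent on the right is no \emph{smaller} in absolute value than the one on the left; since both exponents are non-positive and the claim is $e^{-A}\le e^{-B}$ with $A=\frac{(x-\bar z)^2}{2t}+\frac{(\Delta z)^2}{2s}$ and $B=\frac{(x-z_1)^2+(x-z_2)^2}{2\tau}$, what is actually needed is $A\ge B$, i.e.\ the right-hand exponent is no \emph{larger} in absolute value. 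Your displayed inequality and its verification are correct, so this is purely a wording slip. Second, your closing claim that the identity on $(x-z_1)^2+(x-z_2)^2$ ``forces the choice of the constant $4$'' is not accurate: your own coefficient comparison only needs $\tau\ge 2t$ and $\tau\ge s/2$, so $\tau=(2t)\vee s$ would already suffice. The factor $4$ in the lemma statement is therefore not tight as far as this argument goes (it certainly does make the inequality true, since it only enlarges $\tau$), so the constant should not be attributed to the identity you used.
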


\begin{lemma}[Lemma 4.9 of \cite{ChenDalang13Heat}]
\label{LH:IntGG}
 For $0\le s\le t$ and $x, \: y\in\R$,  we have that
\[\int_0^t \ud s\:
G_\nu(s,x)G_\sigma(t-s,y) = \frac{1}{2\sqrt{\nu\sigma}}
\Erfc\left(\frac{1}{\sqrt{2t}}\left(\frac{|x|}{\sqrt{\nu}}
+\frac{|y|}{\sqrt{\sigma} }\right)\right),
\]
where $\nu$ and $\sigma$ are strictly positive.
In particular, by letting $x=0$, we have that
\begin{align} \label{EH:ErfcGG}
\int_0^t\ud s\: \frac{G_\sigma(t-s,y)}{\sqrt{2\pi \nu s}}  =
\frac{1}{2\sqrt{\nu\sigma}}\Erfc\left(\frac{|y|}{\sqrt{2\sigma
t}}\right)\le
\frac{\sqrt{\pi t}}{\sqrt{2\nu}} G_{\sigma}\left(t,y\right).
\end{align}
\end{lemma}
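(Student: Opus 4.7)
The plan is to compute the time-convolution on the left-hand side via a Laplace transform in $t$. Since the integrand is literally the convolution (in the time variable) of two Gaussian kernels whose one-sided Laplace transforms are explicit, this should give the $\Erfc$-formula cleanly, without any awkward direct substitution.

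Set $a=|x|/\sqrt{\nu}$ and $b=|y|/\sqrt{\sigma}$. I would first recall the classical identity
\[
\int_0^\infty e^{-\lambda t}\,\frac{1}{\sqrt{2\pi t}}\,e^{-c^2/(2t)}\,\ud t \;=\; \frac{1}{\sqrt{2\lambda}}\,e^{-|c|\sqrt{2\lambda}}\,,\qquad \lambda>0,
\]
and apply it to $G_\nu(t,x)=\nu^{-1/2}(2\pi t)^{-1/2}e^{-a^2/(2t)}$ (and similarly for $G_\sigma$) to obtain
\[
\int_0^\infty e^{-\lambda t}G_\nu(t,x)\,\ud t \;=\; \frac{1}{\sqrt{2\nu\lambda}}\,e^{-a\sqrt{2\lambda}}.
\]
By the convolution theorem this yields
\[
\int_0^\infty e^{-\lambda t}\!\left(\int_0^t G_\nu(s,x)G_\sigma(t-s,y)\,\ud s\right)\!\ud t \;=\; \frac{1}{2\lambda\sqrt{\nu\sigma}}\,e^{-(a+b)\sqrt{2\lambda}}.
\]
To invert I would use a second standard Laplace pair, namely
\[
\int_0^\infty e^{-\lambda t}\,\Erfc\!\left(\frac{c}{\sqrt{2t}}\right)\ud t \;=\; \frac{1}{\lambda}\,e^{-c\sqrt{2\lambda}},\qquad c\ge 0
\]
(which, if one wishes a probabilistic reading, is just the Laplace transform in $t$ of $\mathbb{P}(\max_{s\le t}B_s\ge c)$). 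Uniqueness of Laplace transforms on continuous functions of polynomial growth then forces
\[
\int_0^t G_\nu(s,x)G_\sigma(t-s,y)\,\ud s \;=\; \frac{1}{2\sqrt{\nu\sigma}}\,\Erfc\!\left(\frac{a+b}{\sqrt{2t}}\right),
\]
which is the stated formula. The corollary with $x=0$ is then immediate from $G_\nu(s,0)=(2\pi\nu s)^{-1/2}$.

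For the upper bound in \eqref{EH:ErfcGG}, it suffices to prove the elementary inequality $\Erfc(z)\le e^{-z^2}$ for $z\ge 0$ and apply it with $z=|y|/\sqrt{2\sigma t}$; the algebraic check
\[
\frac{1}{2\sqrt{\nu\sigma}}\,e^{-y^2/(2\sigma t)} \;=\; \frac{\sqrt{\pi t}}{\sqrt{2\nu}}\,G_\sigma(t,y)
\]
then produces the right-hand side. The inequality itself I would prove by examining $f(z):=e^{-z^2}-\Erfc(z)$: one has $f(0)=0$, $f'(z)=2e^{-z^2}(1/\sqrt{\pi}-z)$, so $f$ increases on $[0,1/\sqrt{\pi}]$ and decreases afterwards, while $f(z)\to 0^+$ as $z\to\infty$ (using $\Erfc(z)\sim e^{-z^2}/(z\sqrt{\pi})$); hence $f\ge 0$ throughout.

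The only real obstacle is technical bookkeeping: checking that Fubini applies when exchanging the $\ud t$-integral against $\ud s$ (fine because everything is non-negative and $\lambda>0$), and justifying uniqueness of the Laplace inversion on the relevant class of functions (standard for bounded continuous functions in $t$). A purely real-variable alternative would rely on the completion-of-squares identity
\[
\frac{a^2}{s}+\frac{b^2}{t-s} \;=\; \frac{(a+b)^2}{t} + \frac{[a(t-s)-bs]^2}{ts(t-s)}
\]
followed by the substitution $w=[a(t-s)-bs]/\sqrt{2ts(t-s)}$, but the Jacobian of that substitution contains the extra factor $a(t-s)+bs$ and does not disentangle cleanly from the weight $[s(t-s)]^{-1/2}$ in the integrand, so I would abandon that route in favor of Laplace transforms.
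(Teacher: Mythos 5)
Your proof is correct and complete. Note that the paper does not reprove the main convolution identity here---it is quoted as Lemma 4.9 of the cited reference---and only the final inequality in \eqref{EH:ErfcGG} is justified in this paper, via the integral representation $\Erfc(x) = \tfrac{2}{\pi}e^{-x^2}\int_0^\infty (1+t^2)^{-1}e^{-x^2t^2}\,\ud t$ from NIST (7.7.1), which gives $\Erfc(x)\le e^{-x^2}$ in one line by bounding the integral by $\pi/2$. Your Laplace-transform computation is a natural, self-contained derivation of the convolution identity: both transform pairs you quote are standard, nonnegativity justifies Tonelli for the convolution theorem, and uniqueness of the inverse on bounded continuous functions of $t$ closes the argument. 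For the step $\Erfc(z)\le e^{-z^2}$ ($z\ge 0$), your monotonicity argument on $f(z)=e^{-z^2}-\Erfc(z)$ is equally valid---$f(0)=0$, $f$ rises to a peak at $z=1/\sqrt{\pi}$, then decreases to the limit $0$, so $f\ge 0$ throughout; a still shorter route is to note that for $u\ge z\ge 0$ one has $u^2\ge z^2+(u-z)^2$, whence $\Erfc(z)=\tfrac{2}{\sqrt{\pi}}\int_z^\infty e^{-u^2}\,\ud u\le e^{-z^2}\cdot\tfrac{2}{\sqrt{\pi}}\int_z^\infty e^{-(u-z)^2}\,\ud u=e^{-z^2}$. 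Your assessment of the abandoned completion-of-squares route is also accurate: the identity $\tfrac{a^2}{s}+\tfrac{b^2}{t-s}=\tfrac{(a+b)^2}{t}+\tfrac{[a(t-s)-bs]^2}{ts(t-s)}$ does hold, but the Jacobian of $w=[a(t-s)-bs]/\sqrt{2ts(t-s)}$ carries an extra factor $a(t-s)+bs$ that does not cancel against the $[s(t-s)]^{-1/2}$ weight, so the Laplace route is indeed the cleaner one.
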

Note that the inequality in \eqref{EH:ErfcGG} is because by \cite[(7.7.1), 
p.162]{NIST2010},
\begin{align}
 \label{EH:ErfcBd}
\Erfc(x) = \frac{2}{\pi} e^{-x^2} \int_0^\infty \ud t\: \frac{e^{-x^2 
t^2}}{1+t^2}
\le
\frac{2}{\pi} e^{-x^2} \int_0^\infty\ud t\: \frac{1}{1+t^2} = e^{-x^2}\;,
\end{align}

\begin{lemma}\label{LH:IntGds}
For $t>0$, $\nu>0$ and $x\in\R$,  we have that
\[
\int_0^t  \ud s\: G_\nu(s,x) = 2 t\: G_\nu(t,x) - \frac{|x|}{\nu}\: \Erfc\left( 
\frac{|x|}{\sqrt{2 \nu t}}\right).
\]
\end{lemma}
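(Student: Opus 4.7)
The plan is to prove the identity by verifying that both sides agree at $t=0^+$ and have identical $t$-derivatives, so that they coincide for all $t>0$. Let $F(t)$ denote the left-hand side and $H(t)$ the right-hand side. By the fundamental theorem of calculus, $F'(t)=G_\nu(t,x)$, so the task reduces to showing $H'(t)=G_\nu(t,x)$ and $\lim_{t\downarrow 0}H(t)=0$.

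For the derivative of the first term of $H$, I compute $\partial_t G_\nu(t,x)$ directly from the explicit formula for the heat kernel: this gives $\partial_t G_\nu=\tfrac{G_\nu}{2t}\bigl(\tfrac{x^2}{\nu t}-1\bigr)$, so $\tfrac{d}{dt}\bigl[2tG_\nu(t,x)\bigr]=2G_\nu+G_\nu\bigl(\tfrac{x^2}{\nu t}-1\bigr)=G_\nu\bigl(1+\tfrac{x^2}{\nu t}\bigr)$. For the second term, set $y(t):=|x|/\sqrt{2\nu t}$, so $y'(t)=-|x|/(2\sqrt{2\nu}\,t^{3/2})$ and $\Erfc'(y)=-\tfrac{2}{\sqrt{\pi}}e^{-y^2}$. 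Then, using the identity $e^{-x^2/(2\nu t)}=\sqrt{2\pi\nu t}\,G_\nu(t,x)$, the chain rule yields $\tfrac{d}{dt}\Erfc(y(t))=\tfrac{|x|}{t}G_\nu(t,x)$. Combining,
\begin{equation*}
H'(t)=G_\nu(t,x)\Bigl(1+\tfrac{x^2}{\nu t}\Bigr)-\tfrac{|x|}{\nu}\cdot\tfrac{|x|}{t}G_\nu(t,x)=G_\nu(t,x),
\end{equation*}
which matches $F'(t)$.

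For the boundary behavior, $F(0^+)=0$ trivially. For $H(0^+)$, if $x\neq 0$ then $2t G_\nu(t,x)\to 0$ (the exponential decay dominates the polynomial factor) and $\Erfc(|x|/\sqrt{2\nu t})\to\Erfc(+\infty)=0$; if $x=0$, the second term vanishes because of the $|x|$ prefactor, while $2tG_\nu(t,0)=\sqrt{2t/(\pi\nu)}\to 0$. In both cases $H(0^+)=0$, completing the proof. No step here is a serious obstacle; the only mild care required is in keeping track of the $t^{-3/2}$ factor coming from $y'(t)$ and combining it with $\sqrt{2\pi\nu t}$ in a way that exactly cancels the $x^2/(\nu t)$ term arising from $\partial_t[2tG_\nu]$.
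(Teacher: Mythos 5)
Your proof is correct. You verify the identity by showing both sides agree at $t=0^+$ and have identical $t$-derivatives, and all of your calculus is right: $\partial_t G_\nu = \frac{G_\nu}{2t}\left(\frac{x^2}{\nu t}-1\right)$ gives $\frac{d}{dt}[2tG_\nu]=G_\nu\bigl(1+\frac{x^2}{\nu t}\bigr)$, the chain-rule computation $\frac{d}{dt}\Erfc\bigl(|x|/\sqrt{2\nu t}\bigr)=\frac{|x|}{t}G_\nu(t,x)$ is accurate after substituting $e^{-x^2/(2\nu t)}=\sqrt{2\pi\nu t}\,G_\nu(t,x)$, and these combine to give exactly $G_\nu(t,x)$. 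Your boundary analysis covers both $x\neq 0$ and $x=0$ correctly.

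The paper takes a different but equally elementary route: it substitutes $y=|x|/\sqrt{2\nu s}$ to rewrite $\int_0^t G_\nu(s,x)\,ds$ as $\int_{|x|/\sqrt{2\nu t}}^{\infty}\frac{|x|}{\sqrt{\pi}\,\nu\,y^2}e^{-y^2}\,dy$ and then integrates by parts, which produces the two terms on the right-hand side directly. The paper's method is \emph{constructive} — it would let you discover the closed form if you did not already know it — whereas your method is a \emph{verification}: it is short and clean but presupposes knowledge of the answer. Both handle $x=0$ as a separate (trivial) case or uniformly; in your version the $x=0$ case poses no issue since the $|x|$ prefactor simply annihilates the $\Erfc$ term. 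Either proof is perfectly adequate for the lemma.
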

\begin{proof}
The case where $x=0$ can be easily verified. Assume 
that $x\ne 0$. By change of variables $y = |x|/\sqrt{2\nu s}$ and integration 
by parts, we have that
\[
\int_0^t  \ud s\: G_\nu(s,x) =\int_{\frac{|x|}{\sqrt{2\nu t}}}^{+\infty} \ud 
y\: 
\frac{|x|}{\sqrt{\pi } \:\nu\: y^2} \: e^{-y^2}
= \left.\frac{|x|}{\sqrt{\pi }\:\nu\: y} \: 
e^{-y^2}\right|_{+\infty}^{\frac{|x|}{\sqrt{2\nu t}}}
- \frac{|x|}{\nu} \int_{\frac{|x|}{\sqrt{2\nu t}}}^{+\infty}\ud y\: 
 \frac{2}{\sqrt{\pi}}\: e^{-y^2}\:.
\]
Therefore, the conclusion follows from the definition of the function 
$\Erfc(\cdot)$.
\end{proof}

\begin{lemma}\label{L2:gtxr}
If $\nu>0$, $t>0$, $n>1$ and $x\in\R$, then for $r\in \left[0,n^2 t\right]$,
\[
\left|\frac{G_{\nu/2}\left(t+r,
x\right)}{G_{\nu/2}\left(t,x\right)}
-1\right|
 \le
\frac{3r}{t+r}\exp\left(\frac{n^2 x^2}{\nu
t\left(1+n^2\right)} \right)
 \le
\frac{3}{2}\frac{\sqrt{r (1+n^2)}}{\sqrt{t}} G_{\frac{\nu}{2}}^{-1}(t,x)
\:G_{\frac{\nu(1+n^2)}{2}}(t,x).
\]
\end{lemma}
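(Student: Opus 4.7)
The plan is to reduce the first inequality to an explicit one-variable estimate and then treat it via a careful case split. A direct calculation with the Gaussian density gives
\[
f(u) := \frac{G_{\nu/2}(t+r,x)}{G_{\nu/2}(t,x)} = \sqrt{1-u}\,\exp(uy), \qquad u := \frac{r}{t+r},\quad y := \frac{x^2}{\nu t},
\]
so, since $r\in[0,n^2 t]$ forces $u\in[0,u_0]$ with $u_0:=n^2/(1+n^2)$, the first claim is equivalent to showing $|f(u)-1|\le 3u\,e^{u_0 y}$ on this range with $y\ge 0$. The hypothesis $n>1$ enters through $u_0>1/2$, which is precisely the slack needed below.

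I would first establish the auxiliary estimate $1+y\le 3\,e^{\alpha y}$ for all $y\ge 0$, valid whenever $\alpha\ge 1/6$. Minimizing $g(y)=3e^{\alpha y}-1-y$ over $y\ge 0$: if $3\alpha\ge 1$ the function is non-decreasing and $g(0)=2$; otherwise, the unique critical point $y^\ast=\alpha^{-1}\log(1/(3\alpha))$ gives $g(y^\ast)=\alpha^{-1}[1+\log(3\alpha)]-1$, which is non-decreasing in $\alpha$ on $(0,1/3]$ and evaluates at $\alpha=1/6$ to $5-6\log 2>0$. With this in hand I would split the estimate as follows.

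For $u\le 2u_0/3$ I decompose
\[
f(u)-1 = (\sqrt{1-u}-1)\,e^{uy} + (e^{uy}-1),
\]
use $|\sqrt{1-u}-1|\le u$ and $e^{uy}-1\le uy\,e^{uy}$ to obtain $|f(u)-1|\le u(1+y)\,e^{uy}=u(1+y)\,e^{-(u_0-u)y}\,e^{u_0 y}$, and conclude by the auxiliary estimate with $\alpha=u_0-u\ge u_0/3>1/6$. For $u\in(2u_0/3,u_0]$ I note $3u>2u_0>1$ (this uses $n>1$), and the crude bounds $0\le f(u)\le e^{uy}\le e^{u_0 y}$ yield $|f(u)-1|\le e^{u_0 y}\le 3u\,e^{u_0 y}$. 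The main obstacle is making the split level $2u_0/3$ and the threshold $1/6$ in the auxiliary estimate cooperate with the stated constant $3$; the hypothesis $n>1$ is exactly what makes the twin requirements $u_0(1-\theta)\ge 1/6$ and $3\theta u_0\ge 1$ simultaneously hold at $\theta=2/3$.

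The second inequality then follows routinely. AM-GM yields $2\sqrt{rt}\le t+r$, hence $r/(t+r)\le \tfrac{1}{2}\sqrt{r/t}$; a direct calculation identifies $G_{\nu(1+n^2)/2}(t,x)/G_{\nu/2}(t,x)=(1+n^2)^{-1/2}e^{u_0 y}$, so $e^{u_0 y}=\sqrt{1+n^2}\,G_{\nu/2}^{-1}(t,x)\,G_{\nu(1+n^2)/2}(t,x)$. Substituting these into the first inequality yields the claim.
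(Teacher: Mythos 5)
Your proof is correct, and the second inequality is handled exactly as in the paper (AM--GM plus the ratio identity for the Gaussians). For the first inequality, you use the same decomposition as the paper,
\[
\sqrt{1-u}\,e^{uy}-1=\big(\sqrt{1-u}-1\big)e^{uy}+\big(e^{uy}-1\big),
\qquad u=\tfrac{r}{t+r},\ y=\tfrac{x^2}{\nu t},\ u_0=\tfrac{n^2}{1+n^2},
\]
and the same bound $1-\sqrt{1-u}\le u$ on the first piece; the genuine difference lies in how $e^{uy}-1$ is handled. You bound it by the tangent estimate $e^{uy}-1\le uy\,e^{uy}$, which leaves a stray factor $y$ that then has to be absorbed via the auxiliary inequality $1+y\le 3e^{\alpha y}$ (valid for $\alpha\ge 1/6$) together with a case split at $u=2u_0/3$, using $n>1$ to ensure $u_0>\tfrac12$ makes the two requirements $u_0-u\ge\tfrac16$ and $3u>1$ mesh at the split point. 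The paper instead exploits the convexity of $h\mapsto e^{ah}$ via the chord bound $0\le e^{ah}-1\le e^{ab}\,h/b$ on $[0,b]$, applied with $a=y$, $h=u$, $b=u_0$: this gives $e^{uy}-1\le (u/u_0)e^{u_0y}\le 2u\,e^{u_0y}$ directly, with $n>1$ entering only through $1/u_0<2$, and no case split. Both routes are elementary and produce the stated constant $3$; the chord bound is tighter and shorter because it never introduces the factor $y$ that your argument then has to work to eliminate.
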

\begin{proof}
Fix $\nu>0$, $t>0$, $n>1$, and $x\in\R$.
For $r\in \left[0,n^2 t\right]$, define
\[
g_{t,x}(r) =
\frac{G_{\nu/2}\left(t+r,
x\right)}{G_{\nu/2}\left(t,x\right)} -1
=
\frac{\sqrt{t}}{\sqrt{t+r}}\exp\left(\frac{x^2}{\nu t}
\frac{r}{t+r}\right) - 1\:.
\]
Clearly $g_{t,x}(0)=0$.
Notice that
\[
\left|g_{t,x}(r)\right|\le
\left|\exp\left(\frac{x^2}{\nu t} \:
\frac{r}{t+r}\right)-1\right| +
\exp\left(\frac{x^2}{\nu t} \: \frac{r}{t+r}\right)
\left|\frac{\sqrt{t}}{\sqrt{t+r}}-1\right|.
\]
The second term on the right-hand side is  bounded by
$\exp\left(\frac{n^2 x^2}{\nu (1+n^2) t}\right) \frac{r}{t+r}$ for all  $r\in
\left[0,n^2 t\right]$,
because $\frac{r}{r+t} \in \left[0,
\frac{n^2}{1+n^2}\right]$ for $r\in \left[0,n^2 t\right]$.
To bound the first term, we use the fact that
for fixed $a>0$ and $b>0$, $0\le e^{a h} -1 \le e^{a b} \frac{h}{b}$
for all $h\in [0,b]$.
Apply this fact to $\exp\left(\frac{x^2}{\nu t} \:
\frac{r}{t+r}\right)-1$ with $a=\frac{x^2}{\nu t}$, $h=\frac{r}{r+t}$
and $b=\frac{n^2}{1+n^2}$: the first term is bounded by
$2 \exp\left(\frac{n^2 x^2}{\nu t\left(1+n^2\right)}\right) \frac{r}{r+t}$
for all $r\in \left[0,n^2 t\right]$.
Adding these two bounds proves the first inequality.
The second one follows from the inequality $t+r\ge 2\sqrt{t r}$.
\end{proof}

\begin{lemma}\label{L2:arcsin}
 $\int_{t}^{t'} \frac{1}{\sqrt{s(t'-s)}} \ud s = 2
\arcsin\left(\sqrt{\frac{t'-t}{t'}}\right)$ for all $t'>0$ with $t'\ge t\ge 0$.
\end{lemma}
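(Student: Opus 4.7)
The identity is a standard definite-integral evaluation, so my plan is to reduce it to $\int \frac{du}{\sqrt{1-u^2}}$ by a trigonometric (or, equivalently, algebraic) substitution. Specifically, I would set $s = t'\sin^2(\theta)$, so that $\ud s = 2 t' \sin(\theta)\cos(\theta)\ud\theta$ and $\sqrt{s(t'-s)} = \sqrt{t'\sin^2(\theta)\cdot t'\cos^2(\theta)} = t'\sin(\theta)\cos(\theta)$ (all factors nonnegative on the relevant range). The integrand collapses and the antiderivative becomes simply $2\theta$.

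The endpoints transform as follows: $s = t$ corresponds to $\theta = \arcsin(\sqrt{t/t'})$ and $s = t'$ corresponds to $\theta = \pi/2$. Hence
\[
\int_{t}^{t'}\frac{\ud s}{\sqrt{s(t'-s)}} \;=\; 2\left(\frac{\pi}{2}-\arcsin\!\left(\sqrt{t/t'}\right)\right) \;=\; 2\arccos\!\left(\sqrt{t/t'}\right).
\]
The last step is to rewrite this as $2\arcsin(\sqrt{(t'-t)/t'})$ using the elementary identity $\arccos(x) = \arcsin(\sqrt{1-x^2})$ valid for $x\in[0,1]$, applied with $x = \sqrt{t/t'}\in[0,1]$ (since $0\le t\le t'$).

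There is no real obstacle; the only point requiring a brief justification is that the substitution $s = t'\sin^2(\theta)$ is a $C^1$-diffeomorphism from $[\arcsin(\sqrt{t/t'}),\pi/2]$ onto $[t,t']$ (monotone with nonvanishing derivative on the open interval), and that the edge cases $t=0$ (giving $2\arcsin(1) = \pi$, which matches the improper integral $\int_0^{t'}\ud s/\sqrt{s(t'-s)}$) and $t=t'$ (both sides equal $0$) are covered. Alternatively, one can avoid trigonometry entirely with the change of variables $u = \sqrt{s/t'}$, which gives $\ud s = 2t' u\,\ud u$ and $\sqrt{s(t'-s)} = t' u\sqrt{1-u^2}$, leading directly to $2\int_{\sqrt{t/t'}}^1 \ud u/\sqrt{1-u^2}$ and the same final answer.
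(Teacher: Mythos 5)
Your proof is correct, but it takes a different route from the paper's. You evaluate the integral directly via the substitution $s = t'\sin^2\theta$ (equivalently $u=\sqrt{s/t'}$), which reduces the integrand to a constant and yields $2\arccos\bigl(\sqrt{t/t'}\bigr) = 2\arcsin\bigl(\sqrt{(t'-t)/t'}\bigr)$. The paper instead argues in two steps: it checks the case $t=0$ by recognizing $\int_0^{t'} s^{-1/2}(t'-s)^{-1/2}\,\ud s$ as the Beta integral \eqref{E2:BetaInt} with $\mu=\nu=\tfrac12$, giving $\Gamma(\tfrac12)^2/\Gamma(1)=\pi=2\arcsin(1)$, and then for $t\in\,]0,t']$ it differentiates both sides with respect to $t$ (the $t$-derivative of each side equals $-1/\sqrt{t(t'-t)}$), which pins down the identity once the $t=0$ value is fixed. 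Your substitution gives a self-contained closed-form evaluation and avoids invoking the Beta-function identity; the paper's verify-and-differentiate argument is shorter to write and leverages machinery already present in the paper. Both are complete; your note on the improper endpoint at $t=0$ and the degenerate case $t=t'$ is the right level of care, matching what the paper implicitly handles via its $t=0$ base case.
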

\begin{proof}
For $t=0$, the l.h.s. reduces to the Beta integral
(see, e.g., \eqref{E2:BetaInt}. If $t\in \; 
]0,t']$,
differentiate with respect to $t$ on
both sides.
\end{proof}

\addcontentsline{toc}{section}{Bibliography}
\def\polhk#1{\setbox0=\hbox{#1}{\ooalign{\hidewidth
  \lower1.5ex\hbox{`}\hidewidth\crcr\unhbox0}}} \def\cprime{$'$}


\begin{thebibliography}{10}

\bibitem{BallyMilletSS95}
V.~Bally, A.~Millet and M.~Sanz-Sol{\'e}.
\newblock Approximation and support theorem in H\"older norm for parabolic 
stochastic partial differential equations.
\newblock {\em Ann. Probab.}, 23(1):178--222, 1995.

\bibitem{BertiniCancrini94Intermittence}
L.~Bertini and N.~Cancrini.
\newblock The stochastic heat equation: {F}eynman-{K}ac formula and
  intermittence.
\newblock {\em J. Statist. Phys.}, 78(5-6):1377--1401, 1995.

\bibitem{LeChen13Thesis}
L.~Chen.
\newblock {\em Moments, intermittency, and growth indices for nonlinear
  stochastic PDE's with rough initial conditions}.
\newblock PhD thesis, No. 5712, \'Ecole Polytechnique F\'ed\'erale de Lausanne, 
2013. 

\bibitem{ChenDalang13Heat}
L.~Chen and R.~C. Dalang.
\newblock Moments, intermittency, and growth indices for the nonlinear 
stochastic heat equation with rough initial conditions.
\newblock {\em Preprint, arXiv:1307.0600}, 2013.

\bibitem{ConusEct12Initial}
D.~Conus, M.~Joseph, D.~Khoshnevisan, and S.-Y. Shiu.
\newblock Initial measures for the stochastic heat equation.
\newblock {\em Ann. Inst. Henri Poincar\'e Probab. Stat.}, to appear, 2013.

\bibitem{ConusKhosh10Weak}
D.~Conus and D.~Khoshnevisan.
\newblock Weak nonmild solutions to some {SPDEs}.
\newblock {\em Illinois Journal of Mathematics}, Vol. 54, No. 4, 2010.

\bibitem{ConusKhosh10Farthest}
D.~Conus and D.~Khoshnevisan.
\newblock On the existence and position of the farthest peaks of a family of
  stochastic heat and wave equations.
\newblock {\em Probab. Theory Related Fields}, 152(3-4):681--701, 2012.

\bibitem{DalangEtc08Minicourse}
R.~C. Dalang, D.~Khoshnevisan, C.~Mueller, D.~Nualart, and Y.~Xiao.
\newblock {\em A minicourse on stochastic partial differential equations}.
\newblock Edited by D. Khoshnevisan and F. Rassoul-Agha,
\newblock Springer-Verlag, Berlin, 2009.

\bibitem{Dalang09Mini2}
R.~C. Dalang.
\newblock The stochastic wave equation.
\newblock In: {\em A minicourse on stochastic partial differential equations},
  pp. 39--71. Springer, Berlin, 2009.

\bibitem{DalangKhNualart07HittingAdditive}
R.~C. Dalang, D.~Khoshnevisan, and E.~Nualart.
\newblock Hitting probabilities for systems of non-linear stochastic heat
  equations with additive noise.
\newblock {\em ALEA Lat. Am. J. Probab. Math. Stat.}, 3:231--271, 2007.

\bibitem{DalangKhNualart09HittingMult}
R.~C. Dalang, D.~Khoshnevisan, and E.~Nualart.
\newblock Hitting probabilities for systems for non-linear stochastic heat
  equations with multiplicative noise.
\newblock {\em Probab. Theory Related Fields}, 144(3-4):371--427, 2009.

\bibitem{FoondunKhoshnevisan08Intermittence}
M.~Foondun and D.~Khoshnevisan.
\newblock Intermittence and nonlinear parabolic stochastic partial differential
  equations.
\newblock {\em Electron. J. Probab.}, 14:no. 21, 548--568, 2009.


\bibitem{friedman} A.~Friedman.
\newblock{\em Generalized functions and partial differential equations}
\newblock Prentice-Hall, Englewood Cliffs, New Jersey, 1963.

\bibitem{GelfandVilenkin64GF4}
I.~M. Gel'fand and N.~Y. Vilenkin.
\newblock {\em Generalized functions. {V}ol. 4: {A}pplications of harmonic
  analysis}.
\newblock Academic Press, New York, 1964.

\bibitem{Khoshnevisan09Mini}
D.~Khoshnevisan.
\newblock A primer on stochastic partial differential equations.
\newblock In: {\em A minicourse on stochastic partial differential equations},
  pp. 5--42. Springer, Berlin, 2009.

\bibitem{Kunita90Flow}
H.~Kunita.
\newblock {\em Stochastic flows and stochastic differential equations}~.
\newblock Cambridge University Press, Cambridge, 1990.

\bibitem{Kuo05Introduction}
H.-H. Kuo.
\newblock {\em Introduction to stochastic integration}.
\newblock Springer, New York, 2006.

\bibitem{NIST2010}
F.~W.~J. Olver, D.~W. Lozier, R.~F. Boisvert, and C.~W. Clark (eds).
\newblock {\em N{IST} Handbook of mathematical functions}.
\newblock U.S. Department of Commerce National Institute of Standards and
  Technology, Washington, DC, 2010.

\bibitem{PospisilTribe07Parameter}
J.~Posp{\'{\i}}{\v{s}}il and R.~Tribe.
\newblock Parameter estimates and exact variations for stochastic heat
  equations driven by space-time white noise.
\newblock {\em Stoch. Anal. Appl.}, 25(3):593--611, 2007.

\bibitem{RevuzYor99Continuous}
D.~Revuz and M.~Yor.
\newblock {\em Continuous martingales and {B}rownian motion}~($3^{\text{nd}}$ 
ed.).
\newblock Springer-Verlag, Berlin, 1999.

\bibitem{SanSoleSarra99Path}
M.~Sanz-Sol{\'e} and M.~Sarr{\`a}.
\newblock Path properties of a class of {G}aussian processes with applications
  to spde's.
\newblock In: {\em Stochastic processes, physics and geometry: new interplays,
  {I} ({L}eipzig, 1999)}~, pp. 303--316. Amer. Math. Soc., Providence, RI,
  2000.

\bibitem{SanSoleSarra99Holder}
M.~Sanz-Sol{\'e} and M.~Sarr{\`a}.
\newblock H\"older continuity for the stochastic heat equation with spatially
  correlated noise.
\newblock In: {\em Seminar on {S}tochastic {A}nalysis, {R}andom {F}ields and
  {A}pplications, {III}}~, pp. 259--268. Birkh\"auser, Basel, 2002.
\newblock (R. C. Dalang, M. Dozzi and F. Russo, eds).

\bibitem{Shiga94Two}
T.~Shiga.
\newblock Two contrasting properties of solutions for one-dimensional
  stochastic partial differential equations.
\newblock {\em Canad. J. Math.}, 46(2):415--437, 1994.

\bibitem{Stein70Singular}
E.~M. Stein.
\newblock {\em Singular integrals and differentiability properties of
  functions}.
\newblock Princeton University Press, Princeton, N.J., 1970.

\bibitem{Walsh86}
J.~B. Walsh.
\newblock An introduction to stochastic partial differential equations.
\newblock In: {\em \'{E}cole d'\'et\'e de probabilit\'es de {S}aint-{F}lour,
  {XIV}---1984}~, pp. 265--439. Springer, Berlin, 1986.

\end{thebibliography}
\end{document}